\documentclass[11pt,a4paper,leqno,notitlepage]{amsart}
\usepackage[english,frenchb]{babel}
\usepackage{enumerate}
\usepackage{amsmath,todonotes}
\usepackage{amsfonts}
\usepackage{amssymb}
\usepackage{amsthm}
\usepackage{graphicx}
\usepackage{mathrsfs}
\usepackage{datetime}
\usepackage{comment}
\usepackage{fourier-orns}
\usepackage{dsfont}
\usepackage[cyr]{aeguill}
\usepackage{fancyhdr}
\usepackage{mathabx}
\usepackage[latin1]{inputenc}
\usepackage[all,cmtip]{xy}

\usepackage{tikz-cd}
\usetikzlibrary{decorations.pathmorphing}

%
%

%
\usepackage{amsfonts,amsmath,graphicx}

\usepackage{geometry}
 \geometry{
 left=23mm,
right=23mm,
 }
\newcommand{\bc}{\begin{center}}
\newcommand{\ec}{\end{center}}
\newcommand{\bq}{\begin{quote}}
\newcommand{\eq}{\end{quote}}
\newcommand{\bqtn}{\begin{quotation}}
\newcommand{\eqtn}{\end{quotation}}
\newcommand{\beq}{\begin{equation}}
\newcommand{\eeq}{\end{equation}}
\newcommand{\bearr}{\begin{eqnarray}}
\newcommand{\eearr}{\end{eqnarray}}
\newcommand{\bearrn}{\begin{eqnarray*}}
\newcommand{\eearrn}{\end{eqnarray*}}
\newcommand{\bi}{\begin{itemize}}
\newcommand{\ei}{\end{itemize}}
\newcommand{\be}{\begin{enumerate}}
\newcommand{\ee}{\end{enumerate}}
\newcommand{\bthe}{\begin{theorem}}
\newcommand{\ethe}{\end{theorem}}
\newcommand{\blem}{\begin{lemme}}
\newcommand{\elem}{\end{lemme}}
\newcommand{\bsolu}{\begin{solution}}
\newcommand{\esolu}{\end{solution}}
\newcommand{\bexer}{\begin{exercise}}
\newcommand{\eexer}{\end{exercise}}

%
%









\newcommand{\ba}{\begin{array}}
\newcommand{\ea}{\end{array}}


%
\usepackage{amsfonts,amsmath}
%
%
%
%
%
%
%
%
%
%
\newtheorem{theoreme}{Theorem}[section]
\newtheorem{theorem}[theoreme]{Theorem}
\newtheorem{lemme}[theoreme]{Lemma}
\newtheorem{lemma}[theoreme]{Lemma}
\newtheorem{proposition}[theoreme]{Proposition}
\newtheorem{definition}[theoreme]{Definition}

\newtheorem{corollaire}[theoreme]{Corollary}

\newtheorem{solution}[theoreme]{Solution}
\newtheorem{exercise}[theoreme]{Exercise}
\newcommand{\bdefi}{\begin{definition}}
\newcommand{\edefi}{\end{definition}}
\newcommand{\brk}{\begin{remarque}}
\newcommand{\erk}{\end{remarque}}
\newcommand{\bpp}{\begin{proposition}}
\newcommand{\epp}{\end{proposition}}
\newcommand{\bpf}{\begin{proof}}
\newcommand{\epf}{\end{proof}}
\newcommand{\bcor}{\begin{corollaire}}
\newcommand{\ecor}{\end{corollaire}}
\newcommand{\bsol}{\begin{solution}}
\newcommand{\esol}{\end{solution}}
\theoremstyle{definition}

\newtheorem{remarque}[theoreme]{Remark}
\allowdisplaybreaks
\title{Explicit genus formula for any separable cubic global function field}
\author{Sophie Marques and Jacob Ward}

\begin{document}
\large
\selectlanguage{english}
\maketitle
\begin{abstract} 

In the present work, we determine explicitly the genus of any separable cubic extension of any global function field given the minimal polynomial of the extension. We give algorithms computing the ramification data and the genus of any separable cubic extension of any global rational function field. 


\end{abstract}

\noindent \quad {\footnotesize MSC Code (primary): 11T22}

\noindent \quad {\footnotesize MSC Codes (secondary):  11R32, 11R16, 11T55, 11R58}

\noindent \quad {\footnotesize Keywords: Cyclotomy, cubic, function field, finite field, Galois}	
\tableofcontents
\section*{Introduction}
In \cite{MWcubic3}, we proved that any separable cubic extension of an arbitrary field admits a generator $y$, explicitly determined in terms of an arbitrary initial generating equation, such that
\begin{enumerate}
\item $y^3 =a$, with $a \in F$, or
\item
\begin{enumerate}
\item $y^3 -3y=a$, with $a \in F$, when $p\neq 3$, or
\item $y^3 +ay+a^2 = 0$, with $a \in F$, when $p=3$.
\end{enumerate}
\end{enumerate}
As we will show in this paper, this classification allows one to deduce ramification at any place of $K$ to obtain a very explicit formula for the different, and therefore deduce explicit Riemann-Hurwitz formulae computable entirely using the only parameter of the minimal polynomial for the extension. These data may also be used to calculate integral bases \cite[Theorems 3 and 9]{MadMad}, classify low genus fields, and also have applications to differentials and computation of Weirstrass points. 

In this paper, we describe the ramification for any place in a separable cubic extension of a global field. This study of ramification permits us to obtain in \S 3.3 a Riemann-Hurwitz formula for any separable extension of a cubic global function field (Theorems \ref{RHPC}, \ref{RH}, \ref{char3RH}). In the Appendix, we offer an algorithms which given any irreducible polynomial of degree $3$ return the ramification data and the genus, an explicit integral basis for the given extension over a global rational field.

\section{Notation}
Throughout the paper we denote the characteristic of the field by $p$ (including the possibility $p=0$). 
We let $K$ denote a function field with field of constants $\mathbb{F}_q$, where $q = p^n$ and $p > 0$ is a prime integer. Let $\overline{K}$ be the algebraic closure of $K$. For an extension $L/K$, we let $\mathcal{O}_{L,x}$ denote the integral closure of $\mathbb{F}_q[x]$ in $L$. We denote by $\mathfrak{p}$ a place of $K$ (Section 3). The \emph{degree} $d_K(\mathfrak{p})$ of $\mathfrak{p}$ is defined as the degree of its residue field, which we denote by $k(\mathfrak{p})$, over the constant field $\mathbb{F}_q$. The cardinality of the residue field $k(\mathfrak{p})$ may be written as $|k (\mathfrak{p})|  = q^{d_K(\mathfrak{p})}$. For a place $\mathfrak{P}$ of $L$ over $\mathfrak{p}$, we let $f(\mathfrak{P}|\mathfrak{p}) = [k(\mathfrak{P}):k (\mathfrak{p})]$ denote the inertia degree of $\mathfrak{P}|\mathfrak{p}$. We let $e(\mathfrak{P}|\mathfrak{p})$ be the ramification index of $\mathfrak{P}|\mathfrak{p}$, i.e., the unique positive integer such that $v_\mathfrak{P}(z) = e(\mathfrak{P}|\mathfrak{p})v_\mathfrak{p}(z)$, for all $z \in K$. If $v_\mathfrak{p}(a) \geq 0$, then we let $$\overline{a} := a \mod \mathfrak{p}$$ denote the image of $a$ in $k(\mathfrak{p})$.

Henceforth, we let $F$ denote a field and $p = \text{char}(F)$ the characteristic of this field, where we admit the possibility $p = 0$ unless stated otherwise. We let $\overline{F}$ denote the algebraic closure of $F$. 

\begin{definition} \label{pureclosuredef} \begin{itemize}
\item [$\bullet$] If $p \neq 3$, a generator $y$ of a cubic extension $L/F$ with minimal polynomial of the form $X^3 -a$ $(a \in F)$ is called a {\sf purely} cubic generator, and $L/F$ is called a {\sf purely} cubic extension. If $p = 3$, such an extension is simply called {\sf purely inseparable}.
\item [$\bullet$] If $p\neq 3$ and a cubic extension $L/F$ does not possess a generator with minimal polynomial of this form, then $L/F$ is called {\sf impurely} cubic. 
\item [$\bullet$] For any cubic extension $L/F$, we define the {\sf purely cubic closure} of $L/F$ to be the unique smallest extension $F'$ of $F$ such that $LF'/F'$ is purely cubic. We proved in \cite[Theorem 2.1]{MWcubic3} that the purely cubic closure exists and is unique. 
\end{itemize}
\end{definition}

We would like to point out that by \cite[Corollary 1.2]{MWcubic3}, if $p \neq 3$, then every impure cubic extension $L/K$ has a primitive element $y$ with minimal polynomial of the form $$f(X) = X^3 - 3X - a.$$ We mention this here, as we will use it whenever this case occurs in \S 3. When this is used, the element $y$ will denote any such choice of primitive element.

\section{Function fields} 
\subsection{Constant extensions} 
In this subsection, we wish to determine when a cubic function field over $K$ is a constant extension of $K$. We do this before a study of ramification, as constant extensions are unramified and their splitting behaviour is well understood \cite[Chapter 6]{Vil}. In the subsequent subsections, we will thus assume that our cubic extension $L/K$ is not constant, which, as 3 is prime, is equivalent to assuming that the extension is geometric.

\subsubsection{$X^3-a$, $a\in K$, when $p\neq 3$} 
\begin{lemme}\label{constantextension}
Let $p\neq 3$, and let $L/K$ be purely cubic, i.e. there exists a primitive element $y\in L$ such that $y^3 = a$, $a \in K$. Then $L/K$ is constant if, and only if, $a=ub^3$, where $b\in K$ and $u \in \mathbb{F}_q^*$ is a non-cube. In other words, there is a purely cubic generator $z$ of $L/K$ such that $z^3 = u$, where $u \in \mathbb{F}_q^*$.
\end{lemme}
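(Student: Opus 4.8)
The plan is to work through the equivalence in two directions, using the standard fact that a cubic extension $L/K$ (with $L$ geometric over... wait, actually we want to *characterize* when it's constant) is constant if and only if $L = K \cdot \mathbb{F}_{q^3}$, i.e. $L$ arises from extending the constant field. The key structural input is that since $3$ is prime, $L/K$ constant means $L = K\mathbb{F}_{q^3}$ and $[\mathbb{F}_{q^3}:\mathbb{F}_q]=3$, which forces $q \equiv 1 \pmod 3$ is *not* required — rather, $\mathbb{F}_{q^3}/\mathbb{F}_q$ is always cyclic cubic. I'd also want the fact that $\mathbb{F}_q^*$ has a non-cube element precisely when $3 \mid q-1$, i.e. $q \equiv 1 \pmod 3$; when $q \equiv 2 \pmod 3$ every element of $\mathbb{F}_q^*$ is a cube, and then the statement should be read as: $L/K$ purely cubic is *never* constant (which is consistent, since $\mathbb{F}_{q^3}/\mathbb{F}_q$ is then not obtained by adjoining a cube root).

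First, for the direction ($\Leftarrow$): suppose $a = ub^3$ with $b \in K^*$, $u \in \mathbb{F}_q^*$ a non-cube. Then $z := y/b$ satisfies $z^3 = u \in \mathbb{F}_q^*$, so $z$ is algebraic over $\mathbb{F}_q$; hence $L = K(y) = K(z) \subseteq K\mathbb{F}_q(z) = K\mathbb{F}_q(\sqrt[3]{u})$. Since $u$ is a non-cube in $\mathbb{F}_q$, $X^3 - u$ is irreducible over $\mathbb{F}_q$ (here one needs $3 \nmid q$, and that a non-cube with $3 \mid q-1$ actually gives an irreducible cubic — true because the cube map on $\mathbb{F}_q^*$ has image of index $3$, and $\mathbb{F}_{q^3}^*$ is the unique degree-3 extension), so $\mathbb{F}_q(\sqrt[3]{u}) = \mathbb{F}_{q^3}$, a constant field extension of degree $3$. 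Comparing degrees, $[L:K] = 3 = [K\mathbb{F}_{q^3}:K]$, so $L = K\mathbb{F}_{q^3}$ is constant.

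For ($\Rightarrow$): suppose $L/K$ is constant, so $L = K\mathbb{F}_{q^3}$ and the exact constant field of $L$ is $\mathbb{F}_{q^3}$. Pick a generator $w$ of $\mathbb{F}_{q^3}^*$ (or more efficiently, an element $u_0 \in \mathbb{F}_{q^3}$ with $\mathbb{F}_q(u_0) = \mathbb{F}_{q^3}$); the point is to produce a purely cubic generator of $\mathbb{F}_{q^3}/\mathbb{F}_q$ of Kummer type. Since $3 \mid q^3 - 1$; we must check $3 \mid q - 1$: if $q \equiv 2 \pmod 3$ then $\mathbb{F}_q$ contains no primitive cube root of unity but $\mathbb{F}_{q^3}$ does, and one shows a purely cubic (Kummer) generator cannot exist — this is where I'd invoke that $y^3 = a \in K$ with $L = K(y)$ constant forces $y \in \overline{\mathbb{F}_q} \cap L = \mathbb{F}_{q^3}$ after scaling, hence $y/b \in \mathbb{F}_{q^3}$ for suitable $b$; but then... actually the cleanest route: since $L/K$ is constant and $y$ generates it, $y$ is algebraic over $\mathbb{F}_q$ up to a factor from $K$. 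Concretely, $v_\mathfrak{p}(a) = v_\mathfrak{p}(y^3) = 3 v_\mathfrak{p}(y)$... hmm, this needs $y$'s valuations; better: because $L/K$ is unramified and constant, $\operatorname{div}(y)$ in $L$ is the pullback of a divisor from... Let me restructure: The real mechanism is that $a$ must be, up to cubes in $K^*$, an element of $\mathbb{F}_q^*$. I'd prove this by noting $L = K\mathbb{F}_{q^3} = K(\theta)$ where $\theta \in \mathbb{F}_{q^3}$, and then $y = c \theta^i$ for some $c \in K^*$ and $i \in \{1,2\}$ up to adjusting — because $\{1,\theta,\theta^2\}$ or $\{1, y, y^2\}$ are both $K$-bases and $y$ is a "pure" element; comparing $y^3 \in K$ with the multiplicative structure of $\mathbb{F}_{q^3}^*/\mathbb{F}_q^*$ (cyclic of order $(q^3-1)/(q-1) = q^2+q+1$, which is divisible by $3$ iff $3 \mid q-1$) pins down $y = c w$ with $w^3 \in \mathbb{F}_q^*$, giving $a = y^3 = c^3 w^3 = c^3 u$ with $u = w^3 \in \mathbb{F}_q^*$; and $u$ is a non-cube in $\mathbb{F}_q$ else $X^3 - u$ splits over $\mathbb{F}_q$ and $L$ wouldn't contain $\mathbb{F}_{q^3}$. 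The final "in other words" clause is then immediate by taking $z = y/b$.

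The main obstacle I anticipate is the ($\Rightarrow$) direction: extracting from the abstract hypothesis "$L/K$ is a constant extension" the concrete statement that the *given* pure generator $y$ satisfies $a = y^3 \in \mathbb{F}_q^* \cdot (K^*)^3$. The clean way is via the norm/multiplicative argument: work inside $\overline{\mathbb{F}_q} L$, observe $L \cap \overline{\mathbb{F}_q} = \mathbb{F}_{q^3}$, and show that a Kummer generator $y$ with $y^3 \in K^*$ of a *constant* cyclic cubic extension can be rescaled by an element of $K^*$ to land in $\mathbb{F}_{q^3}$ — equivalently that the class of $a$ in $K^*/(K^*)^3$ comes from $\mathbb{F}_q^*/(\mathbb{F}_q^*)^3$. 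This uses that $L/K$ being unramified and constant means every place is unramified, so $v_\mathfrak{p}(a) \equiv 0 \pmod 3$ for every place $\mathfrak{p}$ of $K$ (as $e = 1$ and $y$ generates the extension — ramification of a Kummer extension $y^3 = a$ at $\mathfrak{p}$ is governed by $v_\mathfrak{p}(a) \bmod 3$), hence $a = b^3 u$ with $b \in K^*$ and $u$ a unit at every place, i.e. $u \in \mathbb{F}_q^*$; then $u$ non-cube follows since otherwise $L = K$. I would present this valuation-theoretic argument as the heart of the proof, with the root-of-unity bookkeeping ($q \bmod 3$) handled as remarks ensuring the statement is vacuous exactly when it should be.
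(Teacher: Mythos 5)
Your backward direction is essentially the paper's: set $z=y/b$, note $z^3=u\in\mathbb{F}_q^*$ with $X^3-u$ having constant coefficients (and being irreducible over $\mathbb{F}_q$ since $u$ is a non-cube), and conclude that $L=K\mathbb{F}_{q^3}$ is constant. The problem is the forward direction, where the argument you commit to as ``the heart of the proof'' has a genuine gap. From ``$L/K$ constant, hence unramified everywhere, hence $v_\mathfrak{p}(a)\equiv 0 \pmod 3$ for all $\mathfrak{p}$'' you conclude ``$a=ub^3$ with $b\in K^*$ and $u$ a unit at every place.'' That step requires the divisor $D:=\tfrac{1}{3}\operatorname{div}(a)$ to be principal, and it need not be: $D$ is only a $3$-torsion class in the degree-zero divisor class group of $K$, which can be nontrivial for a general global function field $K$ (the lemma is stated for arbitrary $K$, not just $\mathbb{F}_q(x)$). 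Worse, your argument only uses the consequence ``unramified everywhere,'' which is strictly weaker than ``constant'': when the class group has nontrivial $3$-torsion there exist everywhere-unramified \emph{geometric} purely cubic extensions $K(\sqrt[3]{a})/K$, and for such $a$ the conclusion $a\in\mathbb{F}_q^*\cdot(K^*)^3$ is actually false --- it would force the extension to be constant by your own backward direction. So no repair of the valuation argument alone can work; the full strength of constancy must be used.

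The paper does exactly that: writing $L=Kl$ with $l$ the exact constant field, it first produces (via the companion paper's classification of cubic forms) a purely cubic generator $\lambda'$ of $L/K$ lying in $l$, i.e.\ with $\lambda'^3=\beta\in\mathbb{F}_q$, and then invokes the result that any two purely cubic generators of the same extension are related by $y=c\lambda'^j$ with $c\in K$ and $j\in\{1,2\}$ --- the analogue of the Kummer-theoretic uniqueness statement, valid without assuming $K$ contains the cube roots of unity. This yields $a=c^3\beta^j$ directly. Your earlier sketch ``$y=c\theta^i$ for $\theta\in\mathbb{F}_{q^3}$'' points at this mechanism, but you assert it without proof and without first arranging for $\theta$ itself to be a cube root of a constant; that relation between purely cubic generators is precisely the nontrivial input you would need to supply.
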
 
\begin{proof}
Suppose that $a=ub^3$, where $b\in K$ and $u$ is a non-cube in $\mathbb{F}_q^*$. Then $z=\frac{y}{b}\in L$ is a generator of $L/K$ such that $z^3 = u$. The polynomial $X^3 -u$ has coefficients in $\mathbb{F}_q$, and as a consequence, $L/K$ is constant. 

Suppose then that $L/K$ is constant. We denote by $l$ the algebraic closure of $\mathbb{F}_q$ in $L$, so that $L=Kl$. Let $l= \mathbb{F}_q (\lambda )$, where $\lambda$ satisfies a cubic polynomial $X^3 + e X^2 + f X + g $ with $e, f , g \in \mathbb{F}_q$. Hence, $L= K(\lambda )$. We denote $$\alpha = -2-\frac{ ( 27 g^2 -9efg +2f^3)^2}{27(3ge-f^2)^3} \in \mathbb{F}_q.$$
As $L/F$ is purely cubic, it then follows by \cite[Corollary 1.2 and Theorem 2.1]{MWcubic3} that either $3eg = f^2$ or the quadratic polynomial $X^2+ \alpha X +1$ has a root in $K$. In both cases, there is a generator $\lambda ' \in L$ such that $$\lambda'^3 = \beta \in \mathbb{F}_q.$$

Hence $\lambda' \in l$. The elements $\lambda'$ and $y$ are two purely cubic generators of $L/K$, whence by \cite[Theorem 3.1]{MWcubic3}, it follows that $y = c \lambda'^j$ where $j=1$, or $2 $ and $c\in K$.  Thus, $a = c^3 \beta^j$, where $\beta\in \mathbb{F}_q$. The result follows.
\end{proof}
\subsubsection{$X^3-3X-a$, $a\in K$, when $p\neq 3$}
Via \cite[Corollary 1.2 and Theorem 3.3]{MWcubic3}, a proof similar to that of Lemma \ref{constantextension} yields the following result. 
\begin{lemme}\label{constantextension2}
Let $p\neq 3$ and  $L/K$ be an impurely cubic extension, so that there is a primitive element $y\in L$ such that $y^3-3y = a$ (see \cite[Corollary 1.2]{MWcubic3}). Then $L/K$ is constant if, and only if, $$u= -3a\alpha^2\beta+a\beta^3+6\alpha+\alpha^3a^2-8\alpha^3 \in \mathbb{F}_q^*,$$ for some $\alpha, \beta \in K$ such that $\alpha^2 + a_2 \alpha \beta + \beta^2 =1$. In other words, there is a generator $z$ of $L/K$ such that $z^3 -3z=u$, where $u \in \mathbb{F}_q^*$.
\end{lemme}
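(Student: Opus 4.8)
The plan is to imitate the proof of Lemma~\ref{constantextension}, using the impure normal form \cite[Corollary 1.2]{MWcubic3} and the impure isomorphism criterion \cite[Theorem 3.3]{MWcubic3} in place of the purely cubic facts used there. (In the constraint $\alpha^2 + a_2\alpha\beta + \beta^2 = 1$ we read $a_2 = a$, the single parameter of the given generator.)

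For the ``if'' direction, suppose $u = -3a\alpha^2\beta + a\beta^3 + 6\alpha + \alpha^3a^2 - 8\alpha^3 \in \mathbb{F}_q^*$ for some $\alpha,\beta \in K$ with $\alpha^2 + a\alpha\beta + \beta^2 = 1$. By \cite[Theorem 3.3]{MWcubic3}, this data applied to the generator $y$ (with $y^3 - 3y = a$) produces a generator $z$ of $L/K$ with $z^3 - 3z = u$; the constraint $\alpha^2 + a\alpha\beta + \beta^2 = 1 \neq 0$ is exactly what makes the underlying substitution nondegenerate, so that $z$ really is a primitive element. Since the polynomial $X^3 - 3X - u$ has coefficients in $\mathbb{F}_q$ and $L = K(z)$, the extension $L/K$ is constant, precisely as in the corresponding step of Lemma~\ref{constantextension}.

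For the ``only if'' direction, assume $L/K$ is constant, and let $l$ be the algebraic closure of $\mathbb{F}_q$ in $L$, so that $L = Kl$ and $[l:\mathbb{F}_q] = [L:K] = 3$. First I would observe that $l/\mathbb{F}_q$ is again impurely cubic: otherwise $l = \mathbb{F}_q(v)$ with $v^3 \in \mathbb{F}_q$, whence $L = Kl = K(v)$ with $v^3 \in \mathbb{F}_q \subseteq K$ would be purely cubic, contrary to hypothesis. Then \cite[Corollary 1.2]{MWcubic3}, applied over $\mathbb{F}_q$, furnishes a generator $\mu$ of $l/\mathbb{F}_q$ with $\mu^3 - 3\mu = b$ for some $b \in \mathbb{F}_q$; and $b \neq 0$, since $b = 0$ would give $\mu(\mu^2 - 3) = 0$ and $\mu$ could not generate a cubic extension, so in fact $b \in \mathbb{F}_q^*$. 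Now $\mu$ (with minimal polynomial $X^3 - 3X - b$) and $y$ (with minimal polynomial $X^3 - 3X - a$) are two impure cubic generators of the same extension $L/K = K(\mu) = K(y)$, so \cite[Theorem 3.3]{MWcubic3} yields $\alpha,\beta \in K$ with $\alpha^2 + a\alpha\beta + \beta^2 = 1$ and $b = -3a\alpha^2\beta + a\beta^3 + 6\alpha + \alpha^3a^2 - 8\alpha^3$. Taking $u := b$, this is the claimed form.

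The work is essentially bookkeeping with the cited results rather than anything conceptual; the steps I expect to need the most care are: (i) checking that the explicit expression and the constraint coming out of \cite[Theorem 3.3]{MWcubic3} match the formula in the statement exactly (including which of $a$, $b$ sits on which side, and any clearing of denominators), and (ii) the small but necessary verifications that $b \neq 0$ and that the transformed element $z$ (resp.\ $\mu$) genuinely generates $L/K$. The one genuinely structural point is the observation in the ``only if'' direction that the constant subextension $l/\mathbb{F}_q$ inherits impurity, which is what licenses the application of \cite[Corollary 1.2]{MWcubic3} over the finite field.
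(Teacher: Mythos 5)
Your proof is correct and follows essentially the same route as the paper, which itself only records that the result follows ``via [Corollary 1.2 and Theorem 3.3]'' by an argument ``similar to that of Lemma \ref{constantextension}'' --- precisely the instantiation you carry out (if: the explicit parametrization produces a generator $z$ with $z^3-3z=u\in\mathbb{F}_q^*$, hence a defining polynomial over the constants; only if: find a normal-form generator inside the constant subfield $l$ and compare it with $y$ via the two-generators theorem). Your observation that $l/\mathbb{F}_q$ inherits impurity, together with the check that $b\neq 0$, is the right replacement for the purity argument used in the purely cubic case, and the reading $a_2=a$ in the constraint is the intended one.
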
 
\subsubsection{$X^3+aX+a^2$, $a\in K$, when $p= 3$} In this case, one may prove the following result, similarly to the proof of Lemma \ref{constantextension}, via \cite[Corollary 1.2 and Theorem 3.6]{MWcubic3}.
\begin{lemme}\label{constantextension2}
Let $p= 3$ and $L/K$ be a separable cubic extension, so that there is a primitive element $y\in L$ such that $y^3+a y+ a^2=0$ (see \cite[Corollary 1.2]{MWcubic3}). Then $L/K$ is constant if, and only if, $$u= \frac{(ja^2 + (w^3 + a w) )^2}{a^3} \in \mathbb{F}_q^*,$$ for some $w\in K$ and $j=1,2$. In other words, there is a generator $z$ of $L/K$ such that $z^3 +uz+u^2=0$, where $u \in \mathbb{F}_q^*$.
\end{lemme}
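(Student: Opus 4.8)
The plan is to follow the proof of Lemma~\ref{constantextension} almost verbatim, substituting for the purely cubic isomorphism criterion its characteristic~$3$ analogue. Concretely, \cite[Theorem~3.6]{MWcubic3} should say that if $y$ and $z$ are two separable cubic generators of one and the same extension, normalised (as in \cite[Corollary~1.2]{MWcubic3}) so that $y^3+ay+a^2=0$ and $z^3+uz+u^2=0$, then their parameters satisfy $u=(ja^2+(w^3+aw))^2/a^3$ for some $w$ in the common base field and some $j\in\{1,2\}$, and conversely any such relation comes from an isomorphism of the two extensions. The guiding remark, exactly as for Lemma~\ref{constantextension}, is that $L/K$ is constant if and only if $L$ admits a generator over $K$ whose monic degree~$3$ minimal polynomial already has all coefficients in $\mathbb{F}_q$.

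For the implication ($\Leftarrow$), assume $u=(ja^2+(w^3+aw))^2/a^3\in\mathbb{F}_q^{*}$ for some $w\in K$ and $j\in\{1,2\}$. Because $u\neq 0$, the polynomial $X^3+uX+u^2$ is in the normal form of a separable cubic generator, and by \cite[Theorem~3.6]{MWcubic3} the displayed relation between $a$ and $u$ guarantees that $L=K(y)$ contains a generator $z$ with $z^3+uz+u^2=0$. Since this polynomial lies in $\mathbb{F}_q[X]$, the element $z$ is algebraic over $\mathbb{F}_q$, hence lies in the algebraic closure $l$ of $\mathbb{F}_q$ inside $L$; thus $L=K(z)\subseteq Kl\subseteq L$, so $L=Kl$ is constant.

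For the implication ($\Rightarrow$), suppose $L/K$ is constant and let $l$ be the algebraic closure of $\mathbb{F}_q$ in $L$, so that $L=Kl$ and $[l:\mathbb{F}_q]=3$. As $\mathbb{F}_q$ is perfect, $l/\mathbb{F}_q$ is a separable cubic extension in characteristic~$3$, so by \cite[Corollary~1.2 and Theorem~2.1]{MWcubic3} it has a generator $\lambda'\in l$ with $\lambda'^3+u\lambda'+u^2=0$ for some $u\in\mathbb{F}_q$; here $u\neq 0$, since $\lambda'\notin\mathbb{F}_q$ rules out the vanishing of the normal-form parameter. Now $\lambda'$ and $y$ are both separable cubic generators of $L/K$ in the normal form of \cite[Corollary~1.2]{MWcubic3}, so \cite[Theorem~3.6]{MWcubic3} produces $w\in K$ and $j\in\{1,2\}$ with $u=(ja^2+(w^3+aw))^2/a^3$. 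This is precisely the asserted criterion, and the final reformulation of the statement (existence of a generator $z$ with $z^3+uz+u^2=0$ and $u\in\mathbb{F}_q^{*}$) is just the equality $L=Kl$ produced along the way.

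I expect the only real work to lie in the careful invocation of \cite[Theorem~3.6]{MWcubic3}: one must confirm that its hypotheses hold over the non-algebraically-closed base $K$, that it transfers the relation itself rather than merely its validity after some further extension, and that it delivers the formula in the stated direction, namely the constant-field parameter $u$ expressed in terms of $a$, rather than its inverse. A minor additional point is the justification that $u\neq 0$, which holds because a nontrivial constant cubic subextension of a function field over a finite field is separable, and hence its normal-form parameter cannot vanish. Beyond these, everything is the same bookkeeping as in the proof of Lemma~\ref{constantextension}.
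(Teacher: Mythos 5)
Your proposal is correct and follows exactly the route the paper intends: the paper gives no written proof for this lemma, stating only that it is proved "similarly to the proof of Lemma \ref{constantextension}, via [Corollary 1.2 and Theorem 3.6]", and your argument is precisely that adaptation, with the characteristic-$3$ generator-comparison theorem replacing the purely cubic one and with the same two-way use of the constant subfield $l$. The extra care you flag (direction of the formula in Theorem 3.6 and the nonvanishing of $u$) is sensible but does not change the substance.
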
 
\subsection{Ramification} 
In this section, we describe the ramifcation of any place of $K$ in a cubic extension $L/K$. As usual, we divide the analysis into the three fundamental cubic forms derived in \cite[Corollary 1.2]{MWcubic3}.

\subsubsection{$X^3 -a$, $a\in K$, when $p\neq 3$} 
If the extension $L/K$ is purely cubic, one may find a purely cubic generator of a form which is well-suited to a determination of ramification, as in the following lemma.
\begin{lemma}\label{purelocalstandardform}
Let $L/K$ be a purely cubic extension. Given a place $\mathfrak{p}$ of $K$, one may select a primitive element $y$ with minimal polynomial of the form $X^3 -a$ such that either 
\begin{enumerate} 
\item $v_\mathfrak{p}(a)=1,2$, or
\item $v_\mathfrak{p} (a)=0$. 
\end{enumerate}
Such a generator $y$ is said to be in {\sf local standard form} at $\mathfrak{p}$.
\end{lemma}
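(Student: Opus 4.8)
The plan is to start from any purely cubic generator $y$ with $y^3 = a$, $a \in K$, and modify it by an element of $K^*$ so that the valuation of the constant coefficient at $\mathfrak{p}$ is reduced to a canonical representative modulo $3$. The key observation is that if $z = cy$ for $c \in K^*$, then $z^3 = c^3 a$, so replacing $a$ by $c^3 a$ changes $v_\mathfrak{p}(a)$ by $3 v_\mathfrak{p}(c)$, which ranges over all multiples of $3$ as $c$ varies. Hence the class of $v_\mathfrak{p}(a)$ in $\ZZ/3\ZZ$ is the only invariant, and every generator obtained this way has $v_\mathfrak{p}(c^3 a) \equiv v_\mathfrak{p}(a) \pmod 3$.

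First I would fix a uniformizer $\pi$ at $\mathfrak{p}$, i.e. an element of $K$ with $v_\mathfrak{p}(\pi) = 1$. Write $v_\mathfrak{p}(a) = 3k + r$ with $r \in \{0,1,2\}$, and set $c = \pi^{-k}$, $z = cy = \pi^{-k} y$. Then $z$ is still a generator of $L/K$ (as $c \in K^*$), its minimal polynomial is $X^3 - c^3 a = X^3 - \pi^{-3k} a$, and $v_\mathfrak{p}(\pi^{-3k} a) = v_\mathfrak{p}(a) - 3k = r \in \{0,1,2\}$. This immediately gives case (1) when $r \in \{1,2\}$ and case (2) when $r = 0$, so the dichotomy in the statement is exactly the split according to whether $3 \mid v_\mathfrak{p}(a)$ or not.

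There is essentially no obstacle here: the only thing to be careful about is that the construction uses a global uniformizer at $\mathfrak{p}$, but for a function field over a finite field such an element of $K$ always exists (one may even take $\pi$ with prescribed behaviour, but mere existence suffices). One should also note that the chosen generator depends on $\mathfrak{p}$ — this is why the terminology is ``local standard form at $\mathfrak{p}$'' — and that different choices of uniformizer, or of $k$, only change $z$ by a cube times a unit, leaving the valuation $r$ unchanged; thus $r$ is well-defined and will serve as the basic ramification invariant exploited in the subsequent ramification analysis. I would close by remarking that the same element $a$ need not be modified at all when $v_\mathfrak{p}(a)$ is already in $\{0,1,2\}$.
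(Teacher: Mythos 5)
Your proposal is correct and follows essentially the same argument as the paper: write $v_\mathfrak{p}(a)=3k+r$ with $r\in\{0,1,2\}$ and rescale the generator by an element of $K^*$ of $\mathfrak{p}$-valuation $k$ to replace $a$ by an element of valuation $r$. The only cosmetic difference is that you produce the scaling element as a power of a uniformizer, whereas the paper invokes weak approximation to find an element of the prescribed valuation; these are interchangeable here.
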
 
\begin{proof} 
Let $y$ be a generator of $L$ such that $y^3=a \in K$. Given a place $\mathfrak{p}$ of $K$, we write $v_\mathfrak{p} (a)= 3 j + r$ with $r=0,1,2$. Via weak approximation, one may find an element $c \in K$ such that $v_\mathfrak{p} (c)= j$. Then $\frac{y}{c}$ is a generator of $L$ such that 
$$ \left(\frac{y}{c}\right)^3 =\frac{y^3}{c^3}= \frac{a}{c^3}$$
and $v_\mathfrak{p} \left( \frac{a}{c^3}\right) = r$. Hence the result.
\end{proof}
When a purely cubic extension $L/K$ is separable, one may also easily determine the fully ramified places in $L/K$.
\begin{theoreme}\label{RPC}
Let $p\neq 3$, and let $L/K$ be a purely cubic extension. Given a purely cubic generator $y$ with minimal polynomial $X^3 -a$, a place $\mathfrak{p}$ of $K$ is ramified if and only if it is fully ramified if, and only if, $(v_\mathfrak{p} (a), 3)=1$.
\end{theoreme}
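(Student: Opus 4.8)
The plan is to work locally at the place $\mathfrak{p}$ and reduce to the two cases isolated in Lemma \ref{purelocalstandardform}. First I would use Lemma \ref{purelocalstandardform} to replace $y$ by a primitive element in local standard form at $\mathfrak{p}$; note that the condition $(v_\mathfrak{p}(a),3)=1$ is unchanged by such a replacement, since passing from $a$ to $a/c^3$ only shifts the valuation by a multiple of $3$. Thus it suffices to treat the two cases: (i) $v_\mathfrak{p}(a)\in\{1,2\}$, where I must show $\mathfrak{p}$ is fully ramified, and (ii) $v_\mathfrak{p}(a)=0$, where I must show $\mathfrak{p}$ is unramified.

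For case (i), suppose $v_\mathfrak{p}(a)=r$ with $r\in\{1,2\}$, so $(r,3)=1$. For any place $\mathfrak{P}$ of $L$ above $\mathfrak{p}$, from $y^3=a$ we get $3\,v_\mathfrak{P}(y)=v_\mathfrak{P}(a)=e(\mathfrak{P}|\mathfrak{p})\,v_\mathfrak{p}(a)=e(\mathfrak{P}|\mathfrak{p})\,r$. Hence $3\mid e(\mathfrak{P}|\mathfrak{p})\,r$, and since $(r,3)=1$ this forces $3\mid e(\mathfrak{P}|\mathfrak{p})$. As $[L:K]=3$ and $\sum_{\mathfrak{P}|\mathfrak{p}} e(\mathfrak{P}|\mathfrak{p})f(\mathfrak{P}|\mathfrak{p})=3$, we conclude there is a unique $\mathfrak{P}$ with $e(\mathfrak{P}|\mathfrak{p})=3$, i.e. $\mathfrak{p}$ is fully ramified.

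For case (ii), suppose $v_\mathfrak{p}(a)=0$. Then $\overline{a}\in k(\mathfrak{p})^*$ is well-defined, and I would argue that $\mathfrak{p}$ is unramified by analysing the reduction of $X^3-a$ modulo $\mathfrak{p}$. Since $p\neq 3$ and $a$ is a $\mathfrak{p}$-unit, the polynomial $X^3-a$ is separable modulo $\mathfrak{p}$ (its formal derivative $3X^2$ is coprime to $X^3-a$ in $k(\mathfrak{p})[X]$, as $X=0$ is not a root of $X^3-\overline{a}$). By Kummer's theorem (or Dedekind's criterion), since $\mathcal{O}_{L}$ is generated locally at $\mathfrak{p}$ by $y$ — which holds because the discriminant of $X^3-a$ is a unit at $\mathfrak{p}$ — the factorisation of $\mathfrak{p}$ in $L$ mirrors the factorisation of $X^3-\overline{a}$ over $k(\mathfrak{p})$, which is squarefree; hence every $e(\mathfrak{P}|\mathfrak{p})=1$ and $\mathfrak{p}$ is unramified. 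This establishes that ramified, fully ramified, and $(v_\mathfrak{p}(a),3)=1$ are all equivalent.

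I expect the main obstacle to be the bookkeeping in case (ii): one must be careful that the chosen generator $y$ in local standard form actually generates the local ring $\mathcal{O}_{L,\mathfrak{P}}$ (equivalently, that $\mathfrak{p}$ does not divide the conductor), which is exactly where $p\neq 3$ and the unit discriminant enter. An alternative that sidesteps Kummer's theorem entirely: since $y$ is integral at $\mathfrak{p}$ with $v_\mathfrak{P}(y)=0$ for every $\mathfrak{P}|\mathfrak{p}$, one can compute the different locally via $\mathfrak{d}(\mathfrak{P}|\mathfrak{p})\mid (f'(y))=(3y^2)$, which is a unit at $\mathfrak{P}$; hence the different is trivial at $\mathfrak{p}$ and $\mathfrak{p}$ is unramified. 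I would likely present this differential-based argument, as it keeps everything in the language of valuations already set up in the Notation section and avoids invoking integral bases prematurely.
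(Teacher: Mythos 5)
Your proof is correct, and the second half takes a genuinely different route from the paper. The first implication (if $(v_\mathfrak{p}(a),3)=1$ then $\mathfrak{p}$ is fully ramified) is exactly the paper's argument: the identity $3v_\mathfrak{P}(y)=v_\mathfrak{P}(a)=e(\mathfrak{P}|\mathfrak{p})v_\mathfrak{p}(a)$ forces $3\mid e(\mathfrak{P}|\mathfrak{p})$, hence $e(\mathfrak{P}|\mathfrak{p})=3$. For the converse the paper also reduces to local standard form with $v_\mathfrak{p}(a)=0$, but then invokes only the weak form of Kummer's theorem (\cite[Theorem 3.3.7]{Sti}) to conclude that $\mathfrak{p}$ is not \emph{fully} ramified, and needs a separate closing paragraph --- passing to the Galois closure $L(\xi)$, noting $K(\xi)/K$ is a constant extension hence unramified, and using multiplicativity of ramification indices --- to exclude a partially ramified place with $e=2$. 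Your different-based computation ($3v_\mathfrak{P}(y)=e(\mathfrak{P}|\mathfrak{p})\cdot 0=0$, so $f'(y)=3y^2$ is a unit at $\mathfrak{P}$ since $p\neq 3$, so $d(\mathfrak{P}|\mathfrak{p})\le v_\mathfrak{P}(f'(y))=0$ and hence $e(\mathfrak{P}|\mathfrak{p})=1$ by Dedekind's different theorem) proves unramifiedness outright, so both the ``not fully ramified'' and ``not partially ramified'' conclusions fall out of a single local estimate; your alternative via the unit discriminant $-27a^2$ and the strong form of Kummer/Dedekind is equally valid. You also correctly flag the one piece of bookkeeping the paper leaves implicit: replacing $y$ by $y/c$ changes $v_\mathfrak{p}(a)$ only by a multiple of $3$, so the criterion $(v_\mathfrak{p}(a),3)=1$ is insensitive to the passage to local standard form.
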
 
\begin{proof} 
Let $\mathfrak{p}$ be a place of $K$ and $\mathfrak{P}$ be a place of $L$ above $\mathfrak{p}$. Suppose that $(v_\mathfrak{p} (a), 3)=1$. Then
$$3v_\mathfrak{P} (y) = v_\mathfrak{P} (y^3)= v_\mathfrak{P} (a)= e(\mathfrak{P}|\mathfrak{p})  v_\mathfrak{p} (a).$$
Since $ (v_\mathfrak{p} (a), 3)=1$, we obtain $ 3 | e(\mathfrak{P}|\mathfrak{p}) $, and as $e(\mathfrak{P}|\mathfrak{p}) \leq 3$, it follows that $e(\mathfrak{P}|\mathfrak{p}) =3$, so that $\mathfrak{p}$ is fully ramified in $L$. 

Conversely, suppose that $(v_\mathfrak{p} (a), 3)\neq 1$. By Lemma \ref{purelocalstandardform}, we know that there exists a generator $z$ of $L$ such that $z^3 -c=0$ and $v_\mathfrak{p} (c)=0$. It is not hard to see that the polynomial $X^3 -a$ is either 
\begin{enumerate} 
\item irreducible modulo $\mathfrak{p}$,
\item $X^3 -a = (X-\alpha ) Q(X) \mod \mathfrak{p}$, where $\alpha \in k(\mathfrak{p})$ and $Q(X)$ is an irreducible quadratic polynomial over modulo $\mathfrak{p}$, or
\item $f(X)  = (X-\alpha ) (X-\beta) (X- \gamma)$ modulo $\mathfrak{p}$ with $\alpha , \beta, \gamma \in k(\mathfrak{p})$ all distinct.
\end{enumerate}
In any of these cases, by Kummer's theorem \cite[Theorem 3.3.7]{Sti}, $\mathfrak{p}$ is either inert or there exist $2$ or $3$ places above it in $L$. Thus, $\mathfrak{p}$ cannot be fully ramified in any case.

Moreover, there are no partially ramified places. Indeed, if $L/K$ is Galois then this is clear and if $L/K$ is not Galois, its Galois closure of $L/K$ is $L(\xi)$ with $K(\xi)/K$ constant therefore unramified and since the index of ramification is multiplicative in tower the only possible index of ramification in $L(\xi)/K$ is $3$ and so is the only possible index of ramification in $L/K$.
\end{proof}

\subsubsection{$X^3-3X-a$, $a\in K$,  $p\neq 3$} 
In order to determine the fully ramified places in extensions of this type, we begin with an elementary but useful lemma. These criteria and notation will be employed throughout what follows.

\begin{lemme}\label{purelycubicclosure}
We consider the polynomial $X^2 +a X +1$ where $a \in K$, we suppose this polynomial is irreducible over $K$, let $c_-,c_+$ denote the roots of this polynomial in $\overline{K}$ the algebraic closure of $K$ and we denote $K(c)$ the quadratic extension $K(c_{\pm})$ of $K$. Then 
$$ c_+ \cdot c_- =1,\ c_+ + c_-= -a, \text{ and } \ \sigma( c_\pm ) = c_\mp,\  \text{ where } \text{\emph{Gal}}(K(c) /K)= \{ Id , \sigma \}.$$ Let $\mathfrak{p}$ be a place of $K$ and $\mathfrak{p}_{c}$ be a place of $K(c )$ above $\mathfrak{p}$. Furthermore, we have:
\begin{enumerate} 
\item For any place $\mathfrak{p}_{c}$ of $K({c})$,
$$v_{\mathfrak{p}_{c}}(c_\pm )= -v_{\mathfrak{p}_{c}}(c_\mp ).$$
\item For any place $\mathfrak{p}_{c}$ of $K({c})$ above a place $\mathfrak{p}$ of $K$ such that $v_{\mathfrak{p}}(a) <0$,
$$ v_{\mathfrak{p}}(a)=-|v_{\mathfrak{p}_{c}}(c_\pm )|,$$ 
and otherwise, $v_{\mathfrak{p}_{c}}(c_\pm )=0$.
\end{enumerate}
\end{lemme}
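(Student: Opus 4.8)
The plan is to establish the algebraic identities first, and then analyse valuations place by place. Since $X^2 + aX + 1$ is irreducible over $K$ with roots $c_+, c_-$ in $\overline K$, Vieta's formulas immediately give $c_+ c_- = 1$ and $c_+ + c_- = -a$; in particular $c_- = 1/c_+$, so $K(c_+) = K(c_-) =: K(c)$ is a genuine quadratic extension. Since the nontrivial automorphism $\sigma$ of $K(c)/K$ must permute the two roots of the irreducible polynomial and cannot fix them (else they would lie in $K$), we get $\sigma(c_\pm) = c_\mp$. These are the ``header'' claims of the lemma.

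For part (1), I would apply any place $\mathfrak{p}_c$ of $K(c)$ to the relation $c_+ c_- = 1$: this yields $v_{\mathfrak{p}_c}(c_+) + v_{\mathfrak{p}_c}(c_-) = v_{\mathfrak{p}_c}(1) = 0$, which is exactly $v_{\mathfrak{p}_c}(c_\pm) = - v_{\mathfrak{p}_c}(c_\mp)$.

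For part (2), fix a place $\mathfrak{p}$ of $K$ and a place $\mathfrak{p}_c$ of $K(c)$ above it, and write $v := v_{\mathfrak{p}_c}$. From $c_+ + c_- = -a$ and the ultrametric inequality, $v(a) = v(c_+ + c_-) \geq \min(v(c_+), v(c_-))$, with equality whenever the two valuations differ. By part (1) the valuations $v(c_+)$ and $v(c_-)$ are negatives of each other, so unless both are $0$ they do differ, and then $v(a) = \min(v(c_+), v(c_-)) = -\max(v(c_+), v(c_-)) = -|v(c_\pm)|$. It remains to pin down which case occurs in terms of the sign of $v_\mathfrak{p}(a)$. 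If $v_\mathfrak{p}(a) < 0$, then $v(a) = e(\mathfrak{p}_c \mid \mathfrak{p}) v_\mathfrak{p}(a) < 0$, so we cannot have $v(c_+) = v(c_-) = 0$; hence $v(a) = -|v(c_\pm)|$ as claimed. Conversely, if $v_\mathfrak{p}(a) \geq 0$, then $c_\pm$ are roots of the monic polynomial $X^2 + aX + 1$ with coefficients in the valuation ring of $\mathfrak{p}_c$, so $c_\pm$ are integral at $\mathfrak{p}_c$, giving $v(c_\pm) \geq 0$; combined with part (1), $v(c_+) = -v(c_-)$ forces $v(c_+) = v(c_-) = 0$.

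The only subtlety — and the step I would be most careful about — is keeping the distinction between $v_\mathfrak{p}$ (the valuation on $K$) and $v_{\mathfrak{p}_c}$ (its chosen extension to $K(c)$) straight, since the statement of (2) mixes the two: the left-hand side involves $v_\mathfrak{p}(a)$ while the right-hand side involves $v_{\mathfrak{p}_c}(c_\pm)$. Because the relation ``$v_\mathfrak{p}(a) < 0 \iff v_{\mathfrak{p}_c}(a) < 0$'' holds (the ramification index is a positive integer), the case division transfers cleanly, so no real obstacle arises; the argument is essentially just Vieta plus the ultrametric inequality plus integrality of roots of a monic integral polynomial.
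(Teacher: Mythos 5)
Your argument for the header identities, for part (1), and for the case $v_{\mathfrak{p}}(a)\geq 0$ of part (2) is correct, and in fact cleaner than the paper's: where the paper substitutes $c_\pm'=c_\pm/a$ and runs a case analysis on $v(c_\pm'^2+c_\pm')$, you work directly with $c_++c_-=-a$ together with part (1), and you handle the nonnegative case by integrality of roots of a monic polynomial over the valuation ring, which avoids the paper's separate treatment of $v_{\mathfrak{p}}(a)>0$ and $v_{\mathfrak{p}}(a)=0$.

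There is, however, one genuine gap in the case $v_{\mathfrak{p}}(a)<0$. Your ultrametric argument establishes $v_{\mathfrak{p}_c}(a)=-|v_{\mathfrak{p}_c}(c_\pm)|$, but the lemma asserts $v_{\mathfrak{p}}(a)=-|v_{\mathfrak{p}_c}(c_\pm)|$, with the valuation of $a$ taken in $K$. These differ by the factor $e(\mathfrak{p}_c\mid\mathfrak{p})$, and your closing remark that ``the case division transfers cleanly because the ramification index is a positive integer'' only controls the \emph{sign} of $v_{\mathfrak{p}_c}(a)$, not its magnitude. To get the stated equality you must show $e(\mathfrak{p}_c\mid\mathfrak{p})=1$ whenever $v_{\mathfrak{p}}(a)<0$; otherwise your argument would only yield $e(\mathfrak{p}_c\mid\mathfrak{p})\,v_{\mathfrak{p}}(a)=-|v_{\mathfrak{p}_c}(c_\pm)|$. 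The paper supplies exactly this step: for $p\neq 2$ one has $K(c)=K(w)$ with $w^2=a^2-4$ (up to the harmless constant $-27$), and $v_{\mathfrak{p}}(a^2-4)=2v_{\mathfrak{p}}(a)$ is even when $v_{\mathfrak{p}}(a)<0$, so $\mathfrak{p}$ is unramified by Kummer theory; for $p=2$ one has an Artin--Schreier generator $w$ with $w^2-w=1/a^2$ and $v_{\mathfrak{p}}(1/a^2)\geq 0$, so again $\mathfrak{p}$ is unramified. Adding this unramifiedness argument closes the gap; without it the displayed equality of part (2) is not what you have proved.
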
 
\begin{proof} 
\begin{enumerate}
\item At any place $\mathfrak{p}_{c}$ of $K({c})$, we have 
$$v_{\mathfrak{p}_{c}}(c_+\cdot c_-) = v_{\mathfrak{p}_{c}} (c_+)+ v_{\mathfrak{p}_{c}}( c_-)=v_{\mathfrak{p}_{c}}( 1)=0,$$ whence
$$v_{\mathfrak{p}_{c}}(c_+)=-v_{\mathfrak{p}_{c}}( c_-).$$
\item As $c_\pm^2 +a  c_\pm +1 =0,$ the elements $c_\pm' =\frac{ c_\pm }{a}$ satisfy
$$c_\pm'^2 + c_\pm' + \frac{1}{a^2} =0.$$
Thus, for any place $\mathfrak{p}_c$ of $K({c})$ above a place $\mathfrak{p}$ of $K$ such that $v_{\mathfrak{p}}(a) <0$, we obtain
$$v_{\mathfrak{p}_{c}}(c_\pm'^2 + c_\pm' ) = - 2 v_{\mathfrak{p}_{c}}(a)  > 0.$$
By the non-Archimedean triangle inequality, this is possible if, and only if, $v_{\mathfrak{p}_{c}}(c_\pm' ) >0$ or  $v_{\mathfrak{p}_{c}}(c_\pm' ) =0$. If $v_{\mathfrak{p}_{c}}(c_\pm' ) >0$, then
$$v_{\mathfrak{p}_{c}}(c_\pm' )=  - 2 v_{\mathfrak{p}_{c}}(a) \quad \text{ and } \quad v_{\mathfrak{p}_{c}}(c_\pm )=  -  v_{\mathfrak{p}_{c}}(a).$$ 
If on the other hand $v_{\mathfrak{p}_{c}}(c_\pm' ) =0$, we obtain 
$$v_{\mathfrak{p}_{c}}(c_\pm)=   v_{\mathfrak{p}_{c}}(a).$$ 
Thus, the latter together with part $(1)$ of this lemma implies that either 
$$v_{\mathfrak{p}_{c}}(c_+) =  v_{\mathfrak{p}_{c}}(a) \quad \text{and} \quad v_{\mathfrak{p}_{c}}(c_-)=-v_{\mathfrak{p}_{c}}(a)$$ or vice versa (with the roles of $c_-$ and $c_+$ interchanged). 
Moreover, note that $\mathfrak{p}_{c}$ is unramified in $K(c)/ K$ so that $ v_{\mathfrak{p}_{c}}(a)=  v_{\mathfrak{p}}(a)$. For if, when $p\neq 2$, then $K(c)/K$ has a generator $w$ such that $w^2 = -27 (a^2-4)$ and $ 2| v_{\mathfrak{p}}( -27 (a^2-4))$, thus by Kummer theory, $\mathfrak{p}$ is unramified and when $p =2$, $K(c)/K$ has a generator $w$ such that $w^2 -w= \frac{1}{a^2}$ and $  v_{\mathfrak{p}}( \frac{1}{a^2})\geq 0$, thus by Artin-Schreier theory, we have that $\mathfrak{p}$ is unramified in $K(c)$, thus the first part of $(2)$. 
For any place $\mathfrak{p}_{c}$ of $K(b)$ above a place $\mathfrak{p}$ of $K$ such that $v_{\mathfrak{p}}(a) > 0$. As
$$v_{\mathfrak{p}_{c}}(c_\pm'^2 + c_\pm' ) = - 2 v_{\mathfrak{p}_{c}}(a)  < 0,$$
again using the non-Archimedean triangle inequality, we can only have $v_{\mathfrak{p}_{c}}(c_\pm'^2) <0$, whence $v_{\mathfrak{p}_{c}}(c_\pm'^2 ) =  - 2 v_{\mathfrak{p}_{c}}(a).$ 
This implies that $v_{\mathfrak{p}_{c}}(c_\pm ) =0$. Finally, via the triangle inequality once more, for any $\mathfrak{p}_{c}$ such that $v_{\mathfrak{p}_{c}}(a) = 0$, we must have
$v_{\mathfrak{p}_{c}}(c_\pm ) =0.$
\end{enumerate}
\end{proof}

\begin{theorem} \label{ramification}
 Let $p \neq 3$, and let $L/K$ be an impurely cubic extension and $y$ primitive element with minimal polynomial $f(X) = X^3-3X-a$. Then 
 \begin{enumerate}
 \item the fully ramified places of $K$ in $L$ are precisely those $\mathfrak{p}$ such that $(v_{\mathfrak{p}}(a), 3)=1$ and 
 \item the partially ramified places $\mathfrak{p}$ that is the one with index of ramification $2$ are precisely such that $a \equiv \pm 2 \mod \mathfrak{p}$ and 
 \begin{enumerate}
 \item  $(v_{\mathfrak{p}} ( a^2 - 4) , 2)=1 $, when $p \neq 2$;
\item  there exist $w \in K$ such that $v_{\mathfrak{p}} ( 1/a +1 + w^2 -w ) <0$ and $(v_{\mathfrak{p}} (1/a+1  + w^2 -w) , 2)=1$, when $p=2$.
 \end{enumerate} 
 \end{enumerate}
\end{theorem}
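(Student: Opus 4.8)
The plan is to reduce to the purely cubic situation over the purely cubic closure, where Theorem \ref{RPC} applies. Let $c_\pm$ be the roots of $X^2+aX+1$ and $K(c)=K(c_\pm)$ as in Lemma \ref{purelycubicclosure}; since $L/K$ is impure this polynomial is irreducible over $K$, so $[K(c):K]=2$, and $K(c)$ is the purely cubic closure of $L/K$: writing $y=z+z^{-1}$ and using $c_+c_-=1$, $c_++c_-=-a$, one checks that $z$ (a root of $z^2-yz+1=0$, which lies in $LK(c)$ since $a^2-4=(y^2-4)(y^2-1)^2$ in $L$, whence $\sqrt{y^2-4}\in K(y,\sqrt{a^2-4})$; in characteristic $2$ one uses instead the Artin--Schreier description of $K(c)$ via $c_\pm/a$ from Lemma \ref{purelycubicclosure}, the argument being parallel) generates $M:=LK(c)$ purely cubically over $K(c)$, with $z^3=-c_+$. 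This gives two towers $K\subseteq L\subseteq M$ and $K\subseteq K(c)\subseteq M$, in each of which there is exactly one step of degree $2$ (namely $M/L$, resp.\ $K(c)/K$) and one of degree $3$; since ramification indices are multiplicative in towers, the degree-$2$ step can only contribute a factor prime to $3$.

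Fix $\mathfrak p$, a place $\mathfrak Q$ of $M$ above it, and $\mathfrak p_c=\mathfrak Q\cap K(c)$. For part (1): as $[M:L]$ and $[K(c):K]$ are prime to $3$, the place $\mathfrak p$ is fully ramified in $L$ if and only if $3\mid e(\mathfrak Q|\mathfrak p)$, if and only if $3\mid e(\mathfrak Q|\mathfrak p_c)$, if and only if (Theorem \ref{RPC}, applied to the purely cubic $M/K(c)$ with generator $z$, $z^3=-c_+$) $3\nmid v_{\mathfrak p_c}(c_+)$, which by Lemma \ref{purelycubicclosure}(2) says precisely that $(v_\mathfrak p(a),3)=1$. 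Alternatively the ``if'' implication can be proved directly inside $L$: if $v_\mathfrak p(a)<0$ and $3\nmid v_\mathfrak p(a)$, then from $y^3=3y+a$ one sees $v_\mathfrak P(y)<0$, hence $3v_\mathfrak P(y)=v_\mathfrak P(a)=e(\mathfrak P|\mathfrak p)\,v_\mathfrak p(a)$, which forces $3\mid e(\mathfrak P|\mathfrak p)$ and so $e(\mathfrak P|\mathfrak p)=3$.

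For part (2), necessity is again a tower argument: if $e(\mathfrak P|\mathfrak p)=2$ for some $\mathfrak P\mid\mathfrak p$, then combining the two towers with $e(\mathfrak Q|\mathfrak p_c)\in\{1,3\}$ forces $e(\mathfrak Q|\mathfrak p_c)=1$ and $e(\mathfrak p_c|\mathfrak p)=2$, i.e.\ $K(c)/K$ ramifies at $\mathfrak p$ while $M/K(c)$ is unramified above it. For sufficiency, assume these two conditions; then $e(\mathfrak Q|\mathfrak p)=2$ for every $\mathfrak Q\mid\mathfrak p$, and passing to the Galois closure of $M/K$ and using that the inertia group at $\mathfrak p$ is tame (as $p\neq2,3$), cyclic, and --- constant extensions being unramified --- injects into the $S_3$-part, a short analysis of the subgroups attached to $L$, $K(c)$ and $M$ shows that this inertia group is generated by a transposition, that exactly one place of $L$ over $\mathfrak p$ is ramified and has index $2$, and that $\mathfrak p$ is not fully ramified. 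It then remains to make the two conditions explicit. By Theorem \ref{RPC}, ``$M/K(c)$ unramified above $\mathfrak p$'' means $3\mid v_{\mathfrak p_c}(c_+)$, which by Lemma \ref{purelycubicclosure}(2) is automatic once $v_\mathfrak p(a)\ge0$; and we shall see that ``$K(c)/K$ ramified at $\mathfrak p$'' already forces $v_\mathfrak p(a)\ge0$, so it costs nothing. For ``$K(c)/K$ ramified at $\mathfrak p$'': when $p\neq2$ we have $K(c)=K(\sqrt{a^2-4})$, so this holds iff $v_\mathfrak p(a^2-4)$ is odd, which in particular forces $v_\mathfrak p(a^2-4)>0$, i.e.\ $a\equiv\pm2\mod\mathfrak p$ (and then $v_\mathfrak p(a)=0$); this is exactly (2a). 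When $p=2$, Lemma \ref{purelycubicclosure} exhibits $c_\pm/a$ as roots of $X^2+X+1/a^2$, so $K(c)$ is the Artin--Schreier extension of $K$ attached to $1/a^2$, hence also to $1/a$ (the two differing by $(1/a)^2-(1/a)\in\wp(K)$, $\wp(t)=t^2-t$); by the standard-form criterion, $K(c)/K$ ramifies at $\mathfrak p$ iff some $w\in K$ makes $1/a+w^2-w$ have a pole at $\mathfrak p$ of odd order, and since adding the constant $1$ leaves a negative valuation unchanged this is exactly (2b), and it forces $v_\mathfrak p(a)>0$, i.e.\ $a\equiv0=\pm2\mod\mathfrak p$.

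I expect the two genuine difficulties to be: first, the inertia/group-theoretic bookkeeping for the sufficiency in (2), especially in the case $q\equiv-1\mod3$ (necessarily $p\neq2$), where $M=LK(c)$ need not be Galois over $K$ so that one must pass to the degree-$12$ Galois closure; the crucial observation there is that the purely ``constant'' part of the Galois group cannot lie in the inertia subgroup, which is what keeps the ramification ``cubic enough'' to descend to $L$ rather than being absorbed entirely in $M/L$. Second, the characteristic-$2$ analysis, where $a^2-4=a^2$ collapses the Kummer picture and one must instead identify $K(c)$ correctly as an Artin--Schreier extension and reconcile its standard-form ramification criterion with the precise shape $1/a+1+w^2-w$ occurring in (2b), keeping careful track of the passage between $1/a$ and $1/a^2$ and of the harmless additive constant.
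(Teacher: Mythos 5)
Your argument is correct and, at its core, runs parallel to the paper's, but the two proofs differ enough in their scaffolding to be worth contrasting. For the fully ramified places, the paper ascends to $K(\xi,r)$ (degree up to $4$ over $K$, $\xi$ a primitive cube root of unity and $r$ a root of the quadratic resolvent), invokes \cite[Theorem 1.5]{MWcubic3} to see that $L(\xi,r)/K(\xi,r)$ is Kummer with parameter $c_\pm$, and then applies Kummer theory together with Lemma \ref{purelycubicclosure}; you instead ascend only to $K(c)=K(c_\pm)$ of degree $2$, exhibit $LK(c)/K(c)$ as purely cubic directly via $y=z+z^{-1}$ and the identity $a^2-4=(y^2-4)(y^2-1)^2$, and then reuse Theorem \ref{RPC} in place of an external citation --- a slightly more economical and self-contained reduction, though both routes funnel through Lemma \ref{purelycubicclosure} in exactly the same way, and both (correctly) establish the condition in the form ``$v_\mathfrak{p}(a)<0$ and $(v_\mathfrak{p}(a),3)=1$'', which is what the statement of (1) should really say. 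For the partially ramified places, the necessity direction is the same multiplicativity-in-towers argument in both proofs. For sufficiency the paper works in the sextic Galois closure $L(r)$ and compares the possible decompositions $\mathfrak{p}\mathcal{O}_{L,x}$ against $\mathfrak{p}\mathcal{O}_{L(r),x}=(\mathfrak{P}_{1,r}\mathfrak{P}_{2,r}\mathfrak{P}_{3,r})^2$ using Kummer's theorem and the explicit factorisation $X^3-3X\mp2=(X\pm2)(X\mp1)^2$; you instead compute the inertia subgroup of $S_3$ (or of $S_3\times C_2$ in the degree-$12$ closure) and read off $e(\mathfrak{P}|\mathfrak{p})=[I:I\cap H]$. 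Your route is cleaner group theory and avoids the splitting bookkeeping, but it is only sketched (``a short analysis of the subgroups shows\ldots''), whereas the needed facts --- $|I|=2$, $I$ lands in the non-constant part, the conjugates of $I$ sweep out all three transpositions so that exactly one place of $L$ has index $2$ --- are the entire content of that step and deserve to be written out. Your identification of $K(c)$ with the Artin--Schreier extension attached to $1/a$ when $p=2$, and the observation that the additive constant $1$ is harmless, correctly reconciles condition (2b).

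Two parenthetical claims should be deleted or corrected, though neither affects the validity of the argument: $q\equiv-1\bmod 3$ does \emph{not} force $p\neq2$ (take $q=2^n$ with $n$ odd), and the inertia group need not be tame when $p=2$; fortunately your argument never uses tameness --- a group of order $2$ is cyclic regardless, and the containment $I\subseteq S_3\times 1$ comes from constant extensions being unramified, not from $(|I|,p)=1$.
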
 

\begin{proof} 
\begin{enumerate}
\item As usual, we let $\xi$ be a primitive $3^{rd}$ root of unity. We also let $r$ be a root of the quadratic resolvent $R(X)= X^2 +3aX + ( -27 + 9 a^2)$ of the cubic polynomial $X^3-3X-a$ in $\overline{K}$. As in \cite[Theorem 2.3]{Con}, we know that $L(r)/K(r)$ is Galois, and by \cite[Corollary 1.6]{MWcubic3}, we have that $L(\xi, r) / K(\xi , r)$ is purely cubic. We denote by $\mathfrak{p}$ a place in $K$, $\mathfrak{P}_{\xi,r}$ a place of $L(\xi,r)$ above $\mathfrak{p}$, $\mathfrak{P}=\mathfrak{P}_{\xi,r}\cap L$, and $\mathfrak{p}_{\xi,r}=\mathfrak{P}_{\xi,r}\cap K(\xi , r)$. By \cite[Theorem 1.5]{MWcubic3}, we know that that $L(\xi, r)/K(\xi, r)$ is Kummer; more precisely, there exists $v \in K(\xi , r)$ such that $v^3 = c$ where $c$ is a root of the polynomial $X^2+aX+1$. 

We thus obtain a tower $L(\xi, r) / K(\xi, r) /K(\xi) / K$ with $L(\xi, r )/K(\xi,r )$ Kummer of degree $3$, and where $K(\xi ,r)/ K(\xi)$ and $K(\xi)/K$ are both Kummer extensions of degree $2$. As the index of ramification is multiplicative in towers and the degree of $L(\xi , r) / K(\xi ,r)$ and $K(\xi ,r)/K$ are coprime, the places of $K$ that fully ramify in $L$ are those places of $K$ which lie below those of $K(\xi , r )$ which fully ramify in $L(\xi, r)/K(\xi, r)$. As $L(\xi,r)/K(\xi,r)$ is Kummer, the places of $K(\xi ,r)$ that ramify in $L(\xi,r)$ are described precisely by Kummer theory (see for example \cite[Example 5.8.9]{Vil}) as those $\mathfrak{p}_{\xi,r}$ in $K(\xi ,r)$ such that $$(v_{\mathfrak{p}_{\xi,r}} (c_\pm ) , 3)=1.$$ 
Lemma \ref{purelycubicclosure} states that if $v_{\mathfrak{p}}(a) <0$, then $v_{\mathfrak{p}_{\xi, r}}(c_\pm )= \pm v_{\mathfrak{p}_{\xi,r}}(a)$ and that otherwise, $v_{\mathfrak{p}_{\xi,r}}(c_\pm )=0$. Thus, the ramified places of $L/F$ are those places $\mathfrak{p}$ below a place $\mathfrak{p}_{\xi , r}$ of $K(\xi ,r)$ such that $(v_{\mathfrak{p}_{\xi,r}}(a), 3)=1$. Also, $$v_{\mathfrak{p}_{\xi, r}}(a) = e(\mathfrak{p}_{\xi, r} | \mathfrak{p}) v_{\mathfrak{p}}(a),$$ where $e(\mathfrak{p}_{\xi, r} | \mathfrak{p})$ is the ramification index of a place $\mathfrak{p}$ of $K$ in $K(\xi, r)$, equal to $1$, $2$, or $4$, and in any case, coprime with $3$. Thus, $(v_{\mathfrak{p}_{\xi,r}}(a),3)=1$ if, and only if, $(v_{\mathfrak{p}}(a),3)=1$. As a consequence of the above argument, it therefore follows that a place $\mathfrak{p}$ of $K$ is fully ramified in $L$ if, and only if, $v_{\mathfrak{p}}(a)<0$.
If $L/K$ is Galois then all the places are fully ramified. 

\item Now, if $L/K$ is not Galois and a ramified place $\mathfrak{p}$ is not fully ramified in $L/K$ its index of ramification is $2$. 
 The Galois closure of $L/K$ is $L(r)/K$ since $L(r)/K(r)$ is Galois then all the ramified places in $L(r)/ K(r)$ are fully ramified and the only possible way that the index of ramification of a place is $2$ in $L/K$ is that this place is ramified in $K(r)/K$ since the index of ramification is multiplicative in tower. By Kummer and Artin-Schreier theory this implies that $v_{\mathfrak{p}}(a)>0$. Since $K(r)/K$ is defined by a minimal equation $X^2 = -27 (a^2-4)$ when $p \neq 2$ and and $X^2 -X = 1 + 1/a$ when $p=2$. 


When $v_{\mathfrak{p}} (a)\geq 0$, via Kummer's theorem, for $\mathfrak{p}$ to be partially ramified in $L/K$, the only possible decomposition of $X^3 -3 X -a$ mod $\mathfrak{p}$ is  
$$X^3 -3 X -a = (X-\alpha)^2 ( X-\beta) \mod \mathfrak{p}$$

The equality $f(X) = (X - \alpha)  (X - \beta)^2$ gives us $$X^3 - 3X - a = (X - \alpha)(X - \beta)^2 = X^3 - (2\beta + \alpha) X^2 + (\beta^2 + 2\alpha \beta) X - \alpha \beta^2.$$ Thus $\alpha = - 2\beta$.
We therefore have $- 3 = \beta^2 - 4\beta^2 = - 3 \beta^2$ and $a = - 2\beta^3$. The first of these implies that $$3(\beta^2 - 1) = 3 \beta^2 - 3 = 0.$$ Thus $\beta = \pm 1$ and $a = \mp 2$. Conversely, when $a = \mp 2$, then 
$$X^3-3 X \mp 2= ( X \pm 2)(X\mp 1 )^2.$$

Therefore, in order for $\mathfrak{p}$ to be partially ramified we need that $\mathfrak{p}$ ramified in $K(r)$ that is  
 \begin{enumerate}
 \item  $(v_{\mathfrak{p}} ( a^2 - 4) , 2)=1 $, when $p \neq 2$;
\item  there exist $w \in K$ such that $v_{\mathfrak{p}} ( 1/a +1 + w^2 -w ) <0$ and $(v_{\mathfrak{p}} (1/a+1  + w^2 -w) , 2)=1$, when $p=2$.
 \end{enumerate} 
 and $a \equiv \mp 2  \mod \ \mathfrak{p}$.

Conversely, suppose that $\mathfrak{p}$ is a place such that $a \equiv \mp 2  \mod \ \mathfrak{p}$ and $\mathfrak{p}$ ramified in $K(r)$.  Since $\mathfrak{p}$ cannot be ramified in $K(r)$ without $v_{\mathfrak{p}}(a) \geq 0$ and $L(r)/K$ is Galois, then when $a \equiv \mp 2  \mod \ \mathfrak{p}$ and $\mathfrak{p}$ ramified in $K(r)$, then the place above $\mathfrak{p}$ in $K(r)$ is unramified in $L(r)/K(r)$ (see proof of (1)) therefore completely split and we must have $$\mathfrak{p} \mathcal{O}_{L(r),x} = (\mathfrak{P}_{1,r}\mathfrak{P}_{2,r} \mathfrak{P}_{3,r})^2$$ 
 
Since  $a \equiv \mp 2  \mod \ \mathfrak{p}$, we have $X^3 - 3 X -a = (X-\alpha ) (X-\beta)^2 \mod \mathfrak{p} $ with $\alpha , \beta \in  k (\mathfrak{p})$ and $\alpha \neq \beta$, by Kummer's theorem, we know that there is at least two place above $\mathfrak{p}$ in $L$ thus either
\begin{enumerate} 
\item $\mathfrak{p}\mathcal{O}_{L,x} = \mathfrak{P}_1 \mathfrak{P}_2$ where $\mathfrak{P}_i$, $i=1,2$ place of $L$ above $\mathfrak{p}$, or 
\item  $\mathfrak{p}\mathcal{O}_{L,x} = \mathfrak{P}_1^2 \mathfrak{P}_2$ where $\mathfrak{P}_i$, $i=1,2$ place of $L$ above $\mathfrak{p}$, or
\item $\mathfrak{p}\mathcal{O}_{L,x} = \mathfrak{P}_1 \mathfrak{P}_2 \mathfrak{P}_3$ where $\mathfrak{P}_i$, $i=1,2,3$ place of $L$ above $\mathfrak{p}$.
\end{enumerate}
By \cite[p. 55]{Neu}, we know that $\mathfrak{p}$ is completely split in $L$ (case (c)) if, and only if, (1) $\mathfrak{p}$ is completely split in $K(r)$ and (2) $\mathfrak{p}_r$ completely split in $L(r)$.  Thus, either $\mathfrak{p}\mathcal{O}_{L,x} = \mathfrak{P}_1 \mathfrak{P}_2$ or $\mathfrak{p}\mathcal{O}_{L,x} = \mathfrak{P}_1^2 \mathfrak{P}_2$ where each $\mathfrak{P}_i$ ($i=1,2$) is a place of $L$ above $\mathfrak{p}$. Note that $2\;|\; e (  \mathfrak{P}_{r}| \mathfrak{p})$  for any places $\mathfrak{P}_r$ in $L(r)$ above $\mathfrak{p}$.  
If $\mathfrak{p}\mathcal{O}_{L,x} = \mathfrak{P}_1 \mathfrak{P}_2$, then as $e( \mathfrak{P}_i| \mathfrak{p} )=1$, we have that $2\;|\;e( \mathfrak{P}_{i,r}| \mathfrak{P}_i)$
and $\mathfrak{p}\mathcal{O}_{L(r),x} = \mathfrak{P}_{1,r}^2 \mathfrak{P}_{2,r}^2$, where $\mathfrak{P}_{i,r}$, $i=1,2$ are places above $\mathfrak{p}$ in $L(r)$, which is impossible, as $\mathfrak{p} \mathcal{O}_{L(r),x} = (\mathfrak{P}_{1,r}\mathfrak{P}_{2,r} \mathfrak{P}_{3,r})^2$. Thus, in this case, we must have $\mathfrak{p}\mathcal{O}_{L,x} = \mathfrak{P}_1^2 \mathfrak{P}_2$ and $\mathfrak{P}_1$ is split in $K(r)$ and $\mathfrak{P}_2$ ramifies in $K(r)$.
\end{enumerate}

\end{proof}

This theorem yields the following corollaries, the first being immediate.

\begin{corollaire} \label{ramificationgalois}
Suppose that $q \equiv - 1 \mod 3$. Let $L/K$ be a Galois cubic extension, so that there exists a primitive element $y$ of $L$ with minimal polynomial $f(X) = X^3-3X-a$. Then the (fully) ramified places of $K$ in $L$ are precisely those places $\mathfrak{p}$ of $K$ such that $v_{\mathfrak{p}}(a)<0$ and $(v_{\mathfrak{p}}(a), 3)=1$. 
\end{corollaire}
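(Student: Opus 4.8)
The plan is to reduce the statement to Theorem~\ref{ramification} by first showing that, under the standing hypothesis, a Galois cubic extension of $K$ is automatically impurely cubic, and then observing that in a Galois prime-degree extension ``ramified'' is the same as ``fully ramified''. Note first that $q \equiv -1 \mod 3$ forces $p \neq 3$ (since $3 \nmid q$), so Theorem~\ref{ramification} is applicable once we are in the right form.

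First I would check that $L/K$ is impurely cubic. Since $q \equiv -1 \mod 3$ we have $3 \nmid q-1$, so $\mathbb{F}_q^{*}$ has no element of order $3$; as a primitive cube root of unity $\xi$ is algebraic over $\mathbb{F}_p$ and the field of constants of $K$ is exactly $\mathbb{F}_q$, it follows that $\xi \notin K$. If $L/K$ were purely cubic, say $L = K(y)$ with $y^3 = a$ and $X^3 - a$ irreducible over $K$, then the Galois extension $L/K$ would contain all conjugates $y,\xi y,\xi^2 y$ of $y$, hence $\xi = (\xi y)/y \in L$; but then $[K(\xi):K] \mid [L:K] = 3$ while $[K(\xi):K]\le 2$, forcing $\xi \in K$, a contradiction. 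Therefore $L/K$ is impurely cubic, and by \cite[Corollary 1.2]{MWcubic3} it admits a primitive element $y$ with minimal polynomial $f(X) = X^3 - 3X - a$, $a \in K$, so that Theorem~\ref{ramification} applies to $L/K$.

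Next I would invoke that $L/K$ is Galois of prime degree $3$: at any place $\mathfrak{p}$ of $K$ the ramification index $e(\mathfrak{P}\mid\mathfrak{p})$ is independent of $\mathfrak{P}\mid\mathfrak{p}$ and divides $3$, hence equals $1$ or $3$. Thus there are no partially ramified places (part~(2) of Theorem~\ref{ramification} is vacuous here) and a place is ramified in $L$ exactly when it is fully ramified. It then remains to read off the criterion from part~(1) of Theorem~\ref{ramification}, or rather from its proof: working in the tower $L(\xi,r)/K(\xi,r)/K(\xi)/K$ with $L(\xi,r)/K(\xi,r)$ Kummer, Kummer theory describes ramification there by $(v_{\mathfrak{p}_{\xi,r}}(c_\pm),3)=1$, and by Lemma~\ref{purelycubicclosure} the valuation $v_{\mathfrak{p}_{\xi,r}}(c_\pm)$ is nonzero --- equal to $\pm v_{\mathfrak{p}_{\xi,r}}(a)$ --- precisely when $v_{\mathfrak p}(a)<0$, while $(v_{\mathfrak{p}_{\xi,r}}(a),3)=1$ is equivalent to $(v_{\mathfrak p}(a),3)=1$ because $e(\mathfrak p_{\xi,r}\mid\mathfrak p)\in\{1,2,4\}$ is coprime to $3$. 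Hence $\mathfrak p$ ramifies in $L$ if and only if $v_{\mathfrak p}(a)<0$ and $(v_{\mathfrak p}(a),3)=1$, and by the previous paragraph every such place is fully ramified, which is exactly the assertion. The proof is essentially bookkeeping on top of Theorem~\ref{ramification}; the only point requiring genuine care is the reduction in the second paragraph, i.e. that the Galois hypothesis together with $q \equiv -1 \mod 3$ really does land us in the impurely cubic case so that the $X^3 - 3X - a$ machinery is available, and I expect no serious obstacle beyond that.
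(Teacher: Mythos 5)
Your proposal is correct and follows essentially the same route as the paper, which simply declares this corollary immediate from Theorem \ref{ramification}: you read off part (1) of that theorem and use that in a Galois extension of prime degree the ramification index divides $3$, so partial ramification cannot occur. Your additional verification that $q \equiv -1 \bmod 3$ together with the Galois hypothesis forces $L/K$ to be impurely cubic (so that the form $X^3-3X-a$ is available) is a detail the paper absorbs into the statement itself, and your argument for it is sound.
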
 
\begin{corollaire} \label{odddegree}
Suppose that $q \equiv - 1 \mod 3$. Let $L/K$ be a Galois cubic extension, so that there exists a primitive element $y$  of $L$ with minimal polynomial $f(X) = X^3-3X-a$. Then, only those places of $K$ of even degree can (fully) ramify in $L$. More precisely, any place $\mathfrak{p}$ of $K$ such that $v_\mathfrak{p}(a) <0$ is of even degree. 
\end{corollaire}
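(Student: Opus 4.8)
The plan is to reduce the statement to the splitting behaviour of $\mathfrak{p}$ in a single auxiliary quadratic extension. By Corollary~\ref{ramificationgalois} the places of $K$ that (fully) ramify in $L$ are exactly those with $v_{\mathfrak{p}}(a)<0$, so it suffices to prove the sharper assertion: $v_{\mathfrak{p}}(a)<0$ forces $2\mid d_K(\mathfrak{p})$. Let $\xi$ be a primitive cube root of unity and let $c$ be a root of $X^2+aX+1$, with $K(c)=K(c_\pm)$ as in Lemma~\ref{purelycubicclosure}. Since $q\equiv -1\mod 3$ we have $\xi\notin K$, so $L/K$, being Galois of degree $3$, cannot be purely cubic (that would put $\xi\in L$, i.e.\ $2\mid[L:K]$); hence $L/K$ is impurely cubic, Theorem~\ref{ramification} applies, and the same reasoning shows $X^2+aX+1$ is irreducible over $K$, so $K(c)/K$ is a genuine quadratic extension.

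The crux is that $K(c)$ equals the constant quadratic extension $\mathbb{F}_{q^2}K$. On one hand, since the order of $q$ in $(\mathbb{Z}/3\mathbb{Z})^{\times}$ is $2$, we have $\mathbb{F}_q(\xi)=\mathbb{F}_{q^2}$ and $K(\xi)=\mathbb{F}_{q^2}K$; and since $L/K$ is geometric, $\xi\notin L$, so $[L(\xi):K]=6$ and $L(\xi)/K$ is cyclic. On the other hand, the proof of Theorem~\ref{ramification} shows (using $r\in K$, which holds because $L/K$ is Galois) that $L(\xi)/K(\xi)$ is the Kummer extension generated by a cube root $v$ of $c$, with $c\in K(\xi)$; in particular $c\in L(\xi)$, so $K(c)$ is a quadratic subextension of the cyclic extension $L(\xi)/K$. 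A cyclic degree-$6$ extension has a unique quadratic subextension, whence $K(c)=K(\xi)=\mathbb{F}_{q^2}K$.

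Next I would show that any $\mathfrak{p}$ with $v_{\mathfrak{p}}(a)<0$ splits completely in $K(c)/K$. Let $\mathfrak{p}_c$ be a place of $K(c)$ above $\mathfrak{p}$. By Lemma~\ref{purelycubicclosure}, $K(c)/K$ is unramified at $\mathfrak{p}$, $v_{\mathfrak{p}_c}(c_+)=-v_{\mathfrak{p}_c}(c_-)$, and $|v_{\mathfrak{p}_c}(c_\pm)|=-v_{\mathfrak{p}}(a)>0$, so $v_{\mathfrak{p}_c}(c_+)\neq v_{\mathfrak{p}_c}(c_-)$. If $\mathfrak{p}$ were inert in $K(c)/K$, then $\mathfrak{p}_c$ would be the unique place over $\mathfrak{p}$ and hence fixed by the nontrivial $\sigma\in\Gal(K(c)/K)$, giving $v_{\mathfrak{p}_c}(c_-)=v_{\mathfrak{p}_c}(\sigma c_+)=v_{\mathfrak{p}_c}(c_+)$, a contradiction. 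As $K(c)/K$ is unramified of degree $2$ and $\mathfrak{p}$ is not inert, $\mathfrak{p}$ splits completely.

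Finally, $\mathfrak{p}$ splits completely in the degree-$2$ constant extension $\mathbb{F}_{q^2}K/K$, and by the theory of constant field extensions (\cite[Chapter 6]{Vil}) this happens exactly when $2\mid d_K(\mathfrak{p})$. This proves the sharper statement, and combined with Corollary~\ref{ramificationgalois} it proves the corollary. The only genuinely substantive step is identifying $K(c)$ with $\mathbb{F}_{q^2}K$, i.e.\ realising that for Galois $L/K$ the relevant quadratic ``twisting'' field $K(c)$ is forced to be the constant one; once that is in hand, the conjugation argument for splitting in $K(c)$ and the constant-extension dictionary are routine.
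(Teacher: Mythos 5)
Your proposal is correct and follows essentially the same route as the paper: both use Lemma \ref{purelycubicclosure} to get $v_{\mathfrak{p}_c}(c_+)=-v_{\mathfrak{p}_c}(c_-)\neq 0$ when $v_{\mathfrak{p}}(a)<0$, combine this with $\sigma(c_\pm)=c_\mp$ to conclude that the place above $\mathfrak{p}$ is not fixed by $\sigma$ (equivalently, $\mathfrak{p}$ splits in the quadratic extension), and then invoke the constant-field-extension dictionary to deduce even degree. The only difference is presentational: you explicitly justify the identification $K(c)=K(\xi)=\mathbb{F}_{q^2}K$, which the paper uses implicitly when it evaluates $c_\pm$ at places of $K(\xi)$.
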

\begin{proof} 
In Lemma \ref{purelycubicclosure}, it was noted that $ \sigma( c_\pm ) = c_\mp$ where $\text{\emph{Gal}}(K(c_\pm ) /K)= \{ Id , \sigma \}$, when $c_\pm \notin K$. Let $\xi$ again be a primitive $3^{rd}$ root of unity. We denote by $\mathfrak{p}$ a place of $K$ and $\mathfrak{p}_\xi$ a place of $K(\xi)$ above $\mathfrak{p}$. We find that
$$v_{\mathfrak{p}_\xi}(c_\pm) = v_{\sigma(\mathfrak{p}_\xi)}(\sigma ( c_\pm ))=v_{\sigma(\mathfrak{p}_\xi)}( c_\mp ).$$
Note that if $\sigma (  \mathfrak{p}_\xi )= \mathfrak{p}_\xi$, it follows that  $v_{\mathfrak{p}_\xi}(c_\pm) =v_{\mathfrak{p}_\xi}( c_\mp )$. However, by Lemma \ref{purelycubicclosure}, we have that, for any place $\mathfrak{p}_\xi$ of $K(\xi)$ above a place $\mathfrak{p}$ of $K$ such that $v_{\mathfrak{p}}(a) <0$, it holds that $v_{\mathfrak{p}_\xi}(c_\pm )= \pm v_{\mathfrak{p}_\xi}(a)$, 
and that $v_{\mathfrak{p}_\xi }(c_\pm )= -v_{\mathfrak{p}_\xi }(c_\mp )$. Thus, for any place $\mathfrak{p}_\xi$ of $K(\xi )$ above a place $\mathfrak{p}$ of $K$ such that $v_{\mathfrak{p}}(a) <0$, we find that $v_{\mathfrak{p}_\xi}(c_\pm )\neq v_{\mathfrak{p}_\xi}(c_\mp )$ and thus $\sigma (  \mathfrak{p}_\xi )\neq \mathfrak{p}_\xi$.
Therefore, by \cite[Theorem 6.2.1]{Vil}, we obtain that $ \mathfrak{p} $ is of even degree, for any place $\mathfrak{p}$ of $K$ such that $v_{\mathfrak{p}}(a)<0$.
\end{proof}
\begin{corollaire} \label{infty}
Suppose that $q \equiv - 1 \mod 3$. Let $L/K$ be a Galois cubic extension, so that there exists a primitive element $y$ of $L$ with minimal polynomial $f(X)=X^3-3 X -a$. Then one can choose a single place $\mathfrak{P}_\infty$ at infinity in $K$ such that $v_{\mathfrak{P}_\infty}(a) \geq 0$. 
\end{corollaire}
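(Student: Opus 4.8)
The plan is to deduce this immediately from Corollary~\ref{odddegree}. That corollary -- under precisely the hypotheses assumed here, namely $q\equiv -1\bmod 3$ and $L/K$ a Galois cubic extension with primitive element $y$ satisfying $y^{3}-3y=a$ -- states that every place $\mathfrak{p}$ of $K$ with $v_{\mathfrak{p}}(a)<0$ has even degree over $\mathbb{F}_{q}$. Reading this contrapositively, every place of $K$ of \emph{odd} degree satisfies $v_{\mathfrak{p}}(a)\ge 0$. So the whole task reduces to exhibiting a single place ``at infinity'' of odd degree -- in the rational case, of degree~$1$.

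Concretely, I would fix the model $K=\mathbb{F}_{q}(x)$ (the setting relevant to the Riemann--Hurwitz computations of \S3.3 and the algorithms of the Appendix) and take $\mathfrak{P}_{\infty}$ to be the place of $K$ over the infinite place of $\mathbb{F}_{q}(x)$. There is exactly one such place and its residue field is $\mathbb{F}_{q}$, so $d_{K}(\mathfrak{P}_{\infty})=1$; since $1$ is odd, Corollary~\ref{odddegree} forces $v_{\mathfrak{P}_{\infty}}(a)\ge 0$, for otherwise $\mathfrak{P}_{\infty}$ would be of even degree. In fact the same argument shows that \emph{every} degree-one place of $K$ has nonnegative $a$-valuation; this is the source of the freedom in ``one can choose,'' since a M\"obius change of coordinate $x\mapsto(\alpha x+\beta)/(\gamma x+\delta)$ moves any prescribed degree-one place of $K$ to infinity and leaves both $a$ and all its valuations unchanged, so $\mathfrak{P}_{\infty}$ may be taken to be whichever infinite place is most convenient.

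The only point that needs care -- and essentially the sole obstacle -- is fixing the meaning of ``a place at infinity in $K$'': one must work in a model of $K$ that has an infinite place of odd degree. For a rational function field this is automatic, as the infinite place of any coordinate has degree~$1$; for a general function field one would first choose a separating element $x$ for which $K/\mathbb{F}_{q}(x)$ has a single infinite place and then control its degree, but the mathematical substance is entirely carried by Corollary~\ref{odddegree} together with the triviality that a rational function field has a degree-one place at infinity.
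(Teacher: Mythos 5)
Your reduction is the same one the paper uses: by Corollary \ref{odddegree} every place $\mathfrak{p}$ with $v_{\mathfrak{p}}(a)<0$ has even degree, so it suffices to produce a place at infinity of odd degree, and in the rational case $K=\mathbb{F}_q(x)$ your argument is complete, since the infinite place has degree $1$.

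However, the corollary is stated for an arbitrary function field $K$ with constant field $\mathbb{F}_q$, and for that generality your proposal has a genuine gap exactly at the point you flag as needing ``care'' and then dismiss as carried by a triviality. Two things must actually be proved. First, that $K$ possesses a place of odd degree at all: this is not automatic, and the paper derives it from the existence of a divisor of degree $1$ (\cite[Theorem 6.3.8]{Vil}), since if every place had even degree the image of the degree map would lie in $2\mathbb{Z}$. Second, that an odd-degree place $\mathfrak{P}_\infty$ can be realised as \emph{the} place at infinity for a suitable coordinate: the paper takes $m>2g_K-1$ and uses Riemann--Roch (\cite[Corollary 3.5.8]{Vil}) to produce $x\in K$ whose pole divisor is $\mathfrak{P}_\infty^m$, so that $\mathfrak{P}_\infty$ is the unique place of $K$ above the infinite place of $\mathbb{F}_q(x)$. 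Neither step appears in your write-up; ``choose a separating element for which $K/\mathbb{F}_q(x)$ has a single infinite place and then control its degree'' is precisely the content that must be supplied, and it requires the degree-one-divisor theorem plus Riemann--Roch rather than anything trivial. With those two ingredients added, your argument coincides with the paper's proof.
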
 
\begin{proof} 

One can choose $x \in K \backslash \mathbb{F}_q$ such that the place $\mathfrak{p}_\infty$ at infinity for $x$ has the property that all of the places in $K$ above it are of odd degree. In order to accomplish this, we appeal to a method similar to the proof of \cite[Proposition 7.2.6]{Vil}; because there exists a divisor of degree 1 \cite[Theorem 6.3.8]{Vil}, there exists a prime divisor $\mathfrak{P}_\infty$ of $K$ of odd degree; for if all prime divisors of $K$ were of even degree, then the image of the degree function of $K$ would lie in $2\mathbb{Z}$, which contradicts \cite[Theorem 6.3.8]{Vil}. Let $d$ be this degree. Let $m \in \mathbb{N}$ be such that $m > 2 g_K - 1$. Then, by the Riemann-Roch theorem \cite[Corollary 3.5.8]{Vil}, it follows that there exists $x \in K$ such that the pole divisor of $x$ in $K$ is equal to $\mathfrak{P}_\infty^m$. By definition, the pole divisor of $x$ in $k(x)$ is equal to $\mathfrak{p}_\infty$. It follows that $$(\mathfrak{p}_\infty)_K = \mathfrak{P}_\infty^m,$$ from which it follows that $\mathfrak{P}_\infty$ is the unique place of $K$ above $\mathfrak{p}_\infty$, and by supposition that $\mathfrak{P}_\infty$ is of odd degree. From this argument, we obtain that, with this choice of infinity, all places above infinity in $k(x)$ are of odd degree. (We also note that we may very well choose $m$ relatively prime to $p$, whence $K/k(x)$ is also separable; in general, $K/k(x)$ as chosen will not be Galois.) 

As $q \equiv -1 \mod 3$, $L/K$ is a Galois extension, and $y$ is a primitive element with minimal polynomial of the form $X^3 -3 X -a$ where $a \in K$, we know that all of the places $\mathfrak{p}$ of $K$ such that $v_\mathfrak{p} (a)<0$, and in particular, all the ramified places, are of even degree (see Corollary \ref{odddegree}). It follows that the process described in this proof gives the desired construction, and the result follows.
\end{proof}
\begin{remarque}
We note that when $K$ is a rational function field, one may use Corollary \ref{infty} to show that the parameter $a$ has nonnegative valuation at $\mathfrak{p}_\infty$ for a choice of $x$ such that $K = \mathbb{F}_q(x)$, and thus such $\mathfrak{p}_\infty$ is unramified.
\end{remarque}

\subsubsection{$X^3+aX+a^2$, $a\in K$, $p = 3$} 
As for purely cubic extensions, there exist a local standard form which is useful for a study of splitting and ramification.
\begin{lemma}\label{char3localstandardform1}
Let $p=3$, and let $L/K$ be a cubic separable extension. Let $\mathfrak{p}$ be a place of $K$. Then there is a generator $y$ such that $y^3 +a y +a ^2 =0$ such that $v_\mathfrak{p}(a) \geq 0$, or 
$v_\mathfrak{p} (a) <0 $ and $( v_\mathfrak{p} (a), 3)=1$. Such a $y$ is said to be {\sf in local standard form} at $\mathfrak{p}$.
\end{lemma}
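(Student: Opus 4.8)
The plan is to use the explicit isomorphism criterion for cubic extensions in characteristic $3$ (\cite[Theorem 3.6]{MWcubic3}), which, together with \cite[Corollary 1.2]{MWcubic3}, says that a generator $z$ of $L/K$ with minimal polynomial $X^3 + a'X + a'^2$ exists if and only if
\[ a' = \frac{(ja^2 + w^3 + aw)^2}{a^3} \]
for some $w \in K$ and $j \in \{1,2\}$ — this is the transformation of the parameter that also governs Lemma~\ref{constantextension2}. Note that $a \neq 0$, for otherwise the minimal polynomial would be $X^3$, which is not separable. Thus it suffices to exhibit, for the given $\mathfrak{p}$, a choice of $w$ (we will only use $j = 1$) for which $v_\mathfrak{p}(a')$ is not a negative multiple of $3$, that is, $v_\mathfrak{p}(a') \geq 0$ or $(v_\mathfrak{p}(a'),3)=1$.

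If $v_\mathfrak{p}(a) \geq 0$, or if $v_\mathfrak{p}(a) < 0$ with $(v_\mathfrak{p}(a),3)=1$, then $y$ is already in local standard form and there is nothing to prove. For the remaining case $v_\mathfrak{p}(a) = 3m$ with $m < 0$, I would argue by induction on the positive integer $-v_\mathfrak{p}(a)$. Fix a uniformizer $t$ of $\mathfrak{p}$; then $\overline{at^{-3m}} \in k(\mathfrak{p})^{\ast}$, and since $\mathrm{char}(K) = 3$ the cubing map on the finite field $k(\mathfrak{p})$ is the Frobenius automorphism, hence bijective, so there is $\gamma \in k(\mathfrak{p})^{\ast}$ with $\gamma^3 = -\bigl(\overline{at^{-3m}}\bigr)^2$. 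By weak approximation, pick $w \in K$ with $v_\mathfrak{p}(w) = 2m$ and $\overline{wt^{-2m}} = \gamma$; then the degree-$6m$ coefficients (with respect to $t$) of $w^3$ and $a^2$ cancel, so $v_\mathfrak{p}(a^2 + w^3) \geq 6m + 1$, while $v_\mathfrak{p}(aw) = 5m$. Setting $N := a^2 + w^3 + aw$, this gives $v_\mathfrak{p}(N) \geq \min(6m+1,\,5m) > 6m$ (using $m < 0$), and $N \neq 0$ because $w \in K$ cannot be a root of the irreducible polynomial $X^3 + aX + a^2$. Hence $a' = N^2/a^3 \in K^{\ast}$ is the parameter of a new generator of $L/K$, with
\[ v_\mathfrak{p}(a') = 2 v_\mathfrak{p}(N) - 3 v_\mathfrak{p}(a) > 12m - 9m = v_\mathfrak{p}(a). \]

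To conclude: if this new parameter satisfies $v_\mathfrak{p}(a') \geq 0$ or $(v_\mathfrak{p}(a'),3)=1$, the corresponding generator is in local standard form and we are done; otherwise $v_\mathfrak{p}(a')$ is again a negative multiple of $3$, but $-v_\mathfrak{p}(a') < -v_\mathfrak{p}(a)$, so the induction hypothesis applied to this new generator finishes the proof. The main obstacle is producing this one strict increase of $v_\mathfrak{p}(a)$ while staying inside the family $X^3 + a'X + a'^2$: in contrast with the purely cubic case $X^3 - a$, a simple rescaling $y \mapsto y/c$ does not preserve the shape $X^3 + aX + a^2$, so one must go through the transformation above, and the decisive point is the surjectivity of cubing on $k(\mathfrak{p})$, which holds exactly because $p = 3$. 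The remaining verifications are routine valuation computations.
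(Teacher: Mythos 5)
Your proposal is correct and follows essentially the same route as the paper: both use the parameter transformation $a' = (ja^2 + w^3 + aw)^2/a^3$ from \cite[Theorem 3.6]{MWcubic3}, exploit the bijectivity of cubing on $k(\mathfrak{p})$ (valid precisely because $p=3$) to cancel the leading term of $a^2$ against that of $w^3$, deduce the strict increase $v_\mathfrak{p}(a') > v_\mathfrak{p}(a)$, and iterate. The only differences are cosmetic (you work with a fixed uniformizer and a single cube root in the residue field where the paper factors through an auxiliary element $\alpha$ of valuation $2v_\mathfrak{p}(a)/3$), plus your explicit observation that $N \neq 0$ by irreducibility, which the paper leaves implicit.
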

\begin{proof} 
Let $\mathfrak{p}$ be a place of $K$. Let $y_1$ be a generator of $L/K$ such that $y_1^3 +a_1 y_1 + a_1^2=0$ (this was shown to exist in \cite{MWcubic}). By \cite[Theorem 3.6]{MWcubic3}, any other generator $y_2$ with a minimal equation of the same form $y_2^3 +a_2 y_2 +a_2^2=0$ is such that $y_2 =-\beta (\frac{j}{a_1}y_1- \frac{1}{a_1} w )$, and we have $$a_2  = \frac{(ja_1^2 + (w^3 + a_1 w) )^2}{a_1^3}.$$ Suppose that $v_\mathfrak{p}(a_1)  < 0$, and that $3 \;|\; v_\mathfrak{p}(a_1)$. Using the weak approximation theorem, we choose $\alpha \in K$ such that $v_\mathfrak{p}(\alpha) = 2v_\mathfrak{p}(a_1)/3$, which exists as $3 \;|\; v_\mathfrak{p}(a_1)$. Then $$v_\mathfrak{p}(\alpha^{-3} ja_1^2) = 0.$$ Let $w_0 \in K$ be chosen so that $w_0 \neq - \alpha^{-3} ja_1^2$ and $$v_\mathfrak{p}(\alpha^{-3} ja_1^2 + w_0) > 0.$$ 
This may be done via the following simple argument: As $v_\mathfrak{p}(\alpha^{-3} ja_1^2) = 0$, then $\overline{\alpha^{-3} ja_1^2} \neq 0$ in $k(\mathfrak{p})$. 

We then choose some $w_0 \neq - \alpha^{-3} ja_1^2 \in K$ such that $\overline{ w_0} = -\overline{\alpha^{-3} ja_1^2}$ in $k(\mathfrak{p})$. Note that $v_{\mathfrak{p}}(w_0)=0$. Thus, $\overline{\alpha^{-3} ja_1^2 + w_0}=0$ in $k(\mathfrak{p})$ and $v_\mathfrak{p}(\alpha^{-3} ja_1^2 + w_0) > 0$. As $p = 3$, it follows that the map $X \rightarrow X^3$ is an isomorphism of $k(\mathfrak{p})$, so we may find an element $w_1 \in K$ such that $w_1^3 = w_0 \mod \mathfrak{p}$. Hence $$v_\mathfrak{p}(\alpha^{-3} ja_1^2 + w_1^3) > 0.$$ We then let $w_2 = \alpha w_1$, so that $$v_\mathfrak{p}(j a_1^2 + w_2^3) = v_\mathfrak{p}(j a_1^2 + \alpha^3 w_1^3) > v_\mathfrak{p}(j a_1^2) .$$ Thus, as $v_\mathfrak{p}(a_1) < 0$, we obtain \begin{align*} v_\mathfrak{p}(ja_1^2 + (w_2^3 + a_1 w_2)) &\geq \min\{v_\mathfrak{p}(ja_1^2 + w_2^3  ) ,v_\mathfrak{p}(a_1 w_2) \} \\&> \min\{v_\mathfrak{p}(j a_1^2),v_\mathfrak{p}(a_1 w_2)\}\\& = \min\{v_\mathfrak{p}(j a_1^2),v_\mathfrak{p}(a_1) + 2v_\mathfrak{p}(a_1)/3\} \\& =  \min\{2v_\mathfrak{p}(a_1),5v_\mathfrak{p}(a_1)/3\} \\& = 2v_\mathfrak{p}(a_1). \end{align*} Hence $$v_\mathfrak{p}(a_2)  = v_\mathfrak{p}\left(\frac{(ja_1^2 + (w^3 + a_1 w) )^2}{a_1^3}\right) > 4v_\mathfrak{p}(a_1) - v_\mathfrak{p}(a_1^3) = v_\mathfrak{p}(a_1).$$ We can thus ensure (after possibly repeating this process if needed) that we terminate at an element $a_2 \in K$ for which $v_\mathfrak{p}(a_2) \geq 0$ or for which $v_\mathfrak{p}(a_2) < 0$ and $(v_\mathfrak{p}(a_2),3) = 1$. 
\end{proof} 

\begin{remarque}
Note that we can do what we have done in the previous Lemma simultaneously at any finite place (see \cite[Lemma 1.2]{MWcubic4}).
\end{remarque} 

\begin{theorem} \label{char3localstandardform}
Suppose that $p = 3$. Let $L/K$ be a separable cubic extension and $y$ a primitive element with minimal polynomial $X^3 + aX + a^2$. Let $\mathfrak{p}$ be a place of $K$ and $\mathfrak{P}$ a place of $L$ above $\mathfrak{p}$. Then 
\begin{enumerate} 
\item $\mathfrak{p}$ is fully ramified if, and only if, there is $w \in K$, $v_\mathfrak{p}(\alpha ) < 0$ and $(v_\mathfrak{p}(\alpha ),3) = 1$ with $$\alpha = \frac{(ja^2 + (w^3 + a w) )^2}{a^3}.$$ Equivalently, there is a generator $z$ of $L$ whose minimal polynomial is of the form $X^3 + \alpha X + \alpha^2$, where $v_\mathfrak{p}(\alpha ) < 0$ and $(v_\mathfrak{p}(\alpha ),3) = 1$, and 
\item $\mathfrak{p}$ is partially ramified if and only if $(v_\mathfrak{p}( a) , 2)=1$ and  there is $w \in K$ such that $v_\mathfrak{p}(\alpha ) \geq 0$ with $$\alpha = \frac{(ja^2 + (w^3 + a w) )^2}{a^3}.$$ The later is equivalent to the existence of a generator $z$ of $L$ whose minimal polynomial is of the form $X^3 + \alpha X + \alpha^2$, where $v_\mathfrak{p}(\alpha ) \geq 0$.
\end{enumerate} 
\end{theorem}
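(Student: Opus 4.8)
The plan is to reduce, via Lemma~\ref{char3localstandardform1} and \cite[Theorem~3.6]{MWcubic3}, to a generator $z$ of $L/K$ with minimal polynomial $X^3+\alpha X+\alpha^2$ which is in local standard form at $\mathfrak{p}$, and then to exploit the Artin--Schreier equation hidden inside such a cubic. If $b\in\overline{K}$ is a root of $X^2+\alpha$, then $w:=z/b$ satisfies $w^3-w=-b$; hence $L(b)=E(w)$, with $E:=K(b)$, is an Artin--Schreier cubic whose defining polynomial $X^3-X+b$ has discriminant $4-27b^2\equiv1\pmod 3$, a unit. Since $\operatorname{disc}(X^3+\alpha X+\alpha^2)=-4\alpha^3-27\alpha^4\equiv-\alpha^3\pmod 3$, the quadratic resolvent of $X^3+\alpha X+\alpha^2$ is $K(\sqrt{-\alpha^3})=K(\sqrt{-\alpha})$; so when $-\alpha$ is a square one has $E=K$ and $L/K$ is this cyclic Artin--Schreier cubic, while otherwise $[E:K]=2$, $L/K$ is non-Galois, and its Galois closure is $M=LE$ with $\operatorname{Gal}(M/K)\cong S_3$. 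Throughout, for $\mathfrak{P}\mid\mathfrak{p}$ in $L$, $\mathfrak{q}\mid\mathfrak{p}$ in $E$, and $\mathfrak{P}'$ in $M$ above both, I use multiplicativity of $e(\,\cdot\mid\cdot\,)$ in the towers $M/L/K$ and $M/E/K$.

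For part~(1), the ``if'' direction is the direct valuation argument already used for Theorems~\ref{RPC} and~\ref{ramification}: from $z^3+\alpha z+\alpha^2=0$ with $v_\mathfrak{p}(\alpha)<0$ one first checks $v_\mathfrak{P}(z)<0$, then that the three valuations $3v_\mathfrak{P}(z)$, $v_\mathfrak{P}(\alpha)+v_\mathfrak{P}(z)$, $2v_\mathfrak{P}(\alpha)$ can balance only if $3v_\mathfrak{P}(z)=2v_\mathfrak{P}(\alpha)$, so that $v_\mathfrak{P}(z)=\tfrac{2}{3}\,e(\mathfrak{P}\mid\mathfrak{p})v_\mathfrak{p}(\alpha)$; when $(v_\mathfrak{p}(\alpha),3)=1$ integrality forces $3\mid e(\mathfrak{P}\mid\mathfrak{p})$, i.e.\ full ramification. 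For the converse, by Lemma~\ref{char3localstandardform1} it suffices to show that a local standard form generator with $v_\mathfrak{p}(\alpha)\ge0$ gives a place that is not fully ramified: then $v_\mathfrak{q}(b)\ge0$, so $X^3-X+b$ is integral at $\mathfrak{q}$ with unit discriminant and separable reduction, whence Kummer's theorem makes $\mathfrak{q}$ unramified in $M/E$ (respectively $\mathfrak{p}$ unramified in $L$ when $E=K$); multiplicativity then gives $e(\mathfrak{P}\mid\mathfrak{p})\mid e(\mathfrak{q}\mid\mathfrak{p})\le2$, so $e(\mathfrak{P}\mid\mathfrak{p})\ne3$.

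For part~(2), note that a partially ramified place of a cubic extension has ramification index exactly $2$ over one prime (and $1$ over another), and that a Galois cubic in characteristic~$3$ is Artin--Schreier of degree~$3$, in which every ramified place is totally ramified; hence partial ramification forces $L/K$ non-Galois, so $M=LE$ and $E=K(\sqrt{-\alpha})$. Given a generator with $v_\mathfrak{p}(\alpha)\ge1$ odd, $-\alpha$ is a non-square, $\mathfrak{p}$ ramifies in the quadratic $E/K$ (as $\operatorname{char}K=3\ne2$ and $v_\mathfrak{p}(-\alpha)$ is odd), so $e(\mathfrak{q}\mid\mathfrak{p})=2$; moreover $v_\mathfrak{q}(b)=v_\mathfrak{p}(\alpha)>0$, so $X^3-X+b\equiv X(X-1)(X+1)\pmod{\mathfrak{q}}$ and Kummer's theorem makes $\mathfrak{q}$ split completely in $M/E$, so each of the three primes $\mathfrak{P}'$ of $M$ over $\mathfrak{p}$ has $e(\mathfrak{P}'\mid\mathfrak{p})=2$. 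Part~(1) excludes full ramification; and if $\mathfrak{p}$ were unramified in $L$ then each prime above it would be totally ramified in $M/L$, forcing three primes of $L$ over $\mathfrak{p}$ with $e=f=1$, i.e.\ $\mathfrak{p}$ totally split in $L$ and hence in its Galois closure $M$ (e.g.\ \cite[p.~55]{Neu}), contradicting $e(\mathfrak{P}'\mid\mathfrak{p})=2$. Therefore $\mathfrak{p}$ is partially ramified. Conversely, if $\mathfrak{p}$ is partially ramified then it is not fully ramified, so by Lemma~\ref{char3localstandardform1} a local standard form generator has $v_\mathfrak{p}(\alpha)\ge0$; and if $\mathfrak{P}\mid\mathfrak{p}$ has $e(\mathfrak{P}\mid\mathfrak{p})=2$, then for $\mathfrak{P}'\mid\mathfrak{P}$ in $M$ we get $2\mid e(\mathfrak{P}'\mid\mathfrak{p})=e(\mathfrak{P}'\mid\mathfrak{q})e(\mathfrak{q}\mid\mathfrak{p})$ with $e(\mathfrak{P}'\mid\mathfrak{q})$ odd (it divides $[M:E]=3$), so $e(\mathfrak{q}\mid\mathfrak{p})=2$, i.e.\ $\mathfrak{p}$ ramifies in $K(\sqrt{-\alpha})$ and $v_\mathfrak{p}(\alpha)$ is odd; combined with $v_\mathfrak{p}(\alpha)\ge0$ this yields $v_\mathfrak{p}(\alpha)\ge1$ odd. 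The ``equivalent'' reformulations in terms of an explicit $z$ are exactly the parametrisation of generators from \cite[Theorem~3.6]{MWcubic3}.

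I expect the main obstacle to be the ``if'' direction of part~(2): unlike part~(1), the valuation bookkeeping alone does not separate partial ramification from complete splitting, so one must pass to the degree-$6$ closure $M$, use that $X^3-X+b$ reduces to $X^3-X$ (which splits over $\mathbb{F}_3$) to force complete splitting in $M/E$, and then count primes of $M$ against primes of $L$ and invoke the standard fact that a prime splits completely in a field precisely when it splits completely in its Galois closure. A secondary point to handle with care is the bookkeeping when $-\alpha$ is a square, so that $M=L=E(w)$ and the cubic is already Artin--Schreier over $K$; this degenerate case must be checked separately in both parts, but in it $L/K$ is cyclic and the Artin--Schreier theory over $K$ gives the conclusions directly.
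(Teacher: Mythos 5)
Your proof is correct and follows essentially the same route as the paper: pass to the Galois closure $L(b)$ with $b^2=-\alpha$, exploit the Artin--Schreier equation $w^3-w+b=0$ over $K(b)$, and combine multiplicativity of ramification indices in the tower with Kummer's theorem for the reduction $X^3-X+b\equiv X(X-1)(X+1)$ and the Galois-closure splitting criterion. Your two local variations --- the direct valuation (Newton-polygon) argument for the ``if'' direction of (1), and the explicit count of the three primes of $M$ with $e=2$ in the ``if'' direction of (2) --- are sound and in fact tighten spots where the paper's own write-up is terse.
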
 
\begin{proof} 
\begin{enumerate} 
\item Let $\mathfrak{p}$ be a place of $K$, and denote by $\mathfrak{P}$ a place of $L$ above $\mathfrak{p}$. When $L/F$ is Galois, this theorem is simply the usual Artin-Schreier theory (see \cite[Proposition 3.7.8]{Sti}). Otherwise, since the discriminant of the polynomial $X^3+a X+a^2$ is equal to $\Delta=-4a^3=-a^3$, by \cite[Theorem 2.3]{Con}, we know that the Galois closure of $L/F$ is equal to $L(\Delta )= L(b)$, where $b^2 = -a$. Let $\mathfrak{p}_b$ a place of $K(b)$ above $\mathfrak{p}$. The extension $L(b)/K(b)$ is an Artin-Schreier extension with Artin-Schreier generator $y/b$ possessing minimal polynomial $X^3-X+b$. 
As $L(b)/K(b)$ is Galois, if $\mathfrak{p}_b$ is ramified in $L(b)$, then it must be fully ramified. Furthermore, as the degree $K(b)/K$ is equal to $2$, which is coprime with $3$, and the index of ramification is multiplicative in towers, it follows that the place $\mathfrak{p}$ is fully ramified in $L$ if, and only if, $\mathfrak{p}_b$ is fully ramified in $L(b)$. 
By \cite[Proposition 3.7.8]{Sti}, \begin{enumerate} \item $\mathfrak{p}_b$ is fully ramified in $L(b)$ if, and only if, there is an Artin-Schreier generator $z$ such that $z^3-z -c$ with $v_{\mathfrak{p}_b}(c) < 0$ and $(v_{\mathfrak{p}_b}(c),3)=1$, and \item $\mathfrak{p}_b$ is unramified in $L(b)$ if, and only if, there is an Artin-Schreier generator $z$ such that $z^3-z -c$ with $v_{\mathfrak{p}_b}(c) \geq 0$. \end{enumerate}
Suppose that there is a generator $w$ such that $w^3 +a_1 w +a_1 ^2 =0$, $v_\mathfrak{p} (a_1) <0 $ and $( v_\mathfrak{p} (a_1), 3)=1$. Then over $K(b_1)$, where $b_1^2 =-a_1$, we have an Artin-Schreier generator $z$ of $L(b_1)$ such that  $z^3-z +b_1$. Moreover, $$v_{\mathfrak{p}_{b_1}}(b_1)= \frac{ v_{\mathfrak{p}_{b_1}}(a_1)}{2}= \frac{e(\mathfrak{p}_{b_1} |\mathfrak{p}) v_{\mathfrak{p}}(a_1)}{2},$$ 
where $e(\mathfrak{p}_{b_1} |\mathfrak{p})$ is the index of ramification of $\mathfrak{p}_{b_1}$ over $K(b_1)$, whence $e(\mathfrak{p}_{b_1} |\mathfrak{p})=1$ or $2$. 
As a consequence, $$(v_{\mathfrak{p}_{b_1}}(b_1), 3)=( v_\mathfrak{p} (a_1), 3)=1,$$ and $\mathfrak{p}_{b_1}$ is fully ramified in $L(b_1)$, so that $\mathfrak{p}$ too must be fully ramified in $L$. 

Suppose that there exists a generator $w$ such that $w^3 +a_1 w +a_1 ^2 =0$, $v_\mathfrak{p} (a_1) \geq 0$. Then over $K(b_1)$, where $b_1^2 =-a_1$, we have a generator $z$ of $L(b_1)$ such that  $z^3-z +b_1$ and $$v_{\mathfrak{p}_{b_1}}(b_1)= \frac{e(\mathfrak{p}_{b_1} |\mathfrak{p}) v_{\mathfrak{p}}(a_1)}{2}\geq 0.$$ 
Thus $\mathfrak{p}_b$ is unramified in $L(b)$, so that $\mathfrak{p}$ cannot be fully ramified in $L$, since the ramification index is multiplicative in towers. The theorem then follows by Lemma \ref{char3localstandardform1}. If $L/K$ is Galois, then the ramified places are all fully ramified.
\item If $L/K$ is not Galois and a ramified place $\mathfrak{p}$ is not fully ramified in $L/K$ its index of ramification is $2$. Moreover, when $\mathfrak{p}$ is not fully ramified we know by $(1)$ and Lemma \ref{char3localstandardform1} that there is $w \in K$ such that $v_\mathfrak{p}(\alpha ) \geq 0$ with $$\alpha = \frac{(ja^2 + (w^3 + a w) )^2}{a^3}.$$ 
 The Galois closure of $L/K$ is $L(b)/K$ where $b^2 = -\alpha$ since $L(b)/K(b)$ is Galois then all the ramified places in $L(b)/ K(b)$ are fully ramified and the only possible way that the index of ramification of a place is $2$ in $L/K$ is that this place is ramified in $K(b)/K$ since the index of ramification is multiplicative in tower. 
That is $$(v_{\mathfrak{p}}(a),2)= (v_{\mathfrak{p}}(\alpha) , 2)=1.$$ 
Since the Galois closure has also as generator $c$ such that $c^2 = -a$. Therefore, $v_{\mathfrak{p}} (\alpha )>0$ and $(v_{\mathfrak{p}}(a),2)=1$.


Conversely, suppose there is $w \in K$ such that $v_\mathfrak{p}(\alpha ) > 0$ with $$\alpha = \frac{(ja^2 + (w^3 + a w) )^2}{a^3}$$ and  $(v_{\mathfrak{p}}(a),2)=1$.
If $v_\mathfrak{p}(\alpha) > 0$, then $L(b)/K(b)$ is an Artin-Schreier extension by \cite[Theorem 2.3]{Con}, and there is an Artin-Schreier generator $w=\frac{z}{b}$ such that $w^3 - w + b=0$ and $v_{\mathfrak{p}_b} ( b)>0$, where $\mathfrak{p}_b$ is a place of $K(d)$ above $\mathfrak{p}$. Thus $b \equiv 0 \mod \mathfrak{p}_b$, and the polynomial $$X^3 -X +b \equiv X^3 - X \mod \mathfrak{p}_b$$ factors as $X(X-1)(X+1)$ modulo $\mathfrak{p}_b$. By Kummer's theorem (\cite[Theorem 3.3.7]{Sti}), we then have that $\mathfrak{p}_b$ is completely split in $L(b)$. 

As $\mathfrak{p}_b$ is completely split in $L(b )$, we have that $\mathfrak{p}$ cannot be inert in $L$. Indeed, if $\mathfrak{p}$ were inert in $L$, then there are at most two places above $\mathfrak{p}$ in $L(b)$, in contradiction with the proven fact that $\mathfrak{p}_b$ is completely split in $L(b)$. 

By \cite[p.55]{Neu}, $\mathfrak{p}$ splits completely in $L$ if, and only if, $\mathfrak{p}$ is completely split in $K(b)$ and $\mathfrak{p}_b$ is completely split in $L(b)$.

Also, since by the previous argument $\mathfrak{p}$ cannot be inert in $L$, we have that either $$\mathfrak{p}\mathcal{O}_{L,x} = \mathfrak{P}_1 \mathfrak{P}_2\qquad\text{or}\qquad\mathfrak{p}\mathcal{O}_{L,x} = \mathfrak{P}_1 \mathfrak{P}_2^2,$$ where $\mathfrak{P}_i$, $i=1,2$ are places of $L$ above $\mathfrak{p}$. Let $\mathfrak{P}_b$ be a place of $L(b)$ above $\mathfrak{p}$. When $\mathfrak{p}$ is ramified in $K(b)$, then the index of ramification at any place above $\mathfrak{p}$ in $L(b)$ is divisible by $2$, since $L(b)/K$ is Galois by \cite[Theorem 2.3]{Con}, whence $\mathfrak{p}\mathcal{O}_{L,x} = \mathfrak{P}_1 \mathfrak{P}_2^2$.

\end{enumerate} 
\end{proof} 

\subsection{Riemann-Hurwitz formulae}
Using the extension data, it is possible to give the Riemann-Hurwitz theorem for each of our forms in \cite[Corollary 1.2]{MWcubic3}. These depend only on information from a single parameter.
\subsection{$X^3 -a$, $a\in K$, $p \neq 3$} 
\begin{lemma} 
 \label{diffexpqneq1mod3}
Let $p \neq 3$. Let $L/K$ be a purely cubic extension and $y$ a primitive element of $L$ with minimal polynomial $f(X) = X^3-a$. Let $\mathfrak{p}$ be a place of $K$ and $\mathfrak{P}$ a place of $L$ over $\mathfrak{p}$. Then the following are true:
\begin{enumerate} 
\item $d(\mathfrak{P}|\mathfrak{p}) =0$ if, and only if, $e(\mathfrak{P}|\mathfrak{p})=1$. 
\item  $d(\mathfrak{P}|\mathfrak{p}) =2$, otherwise. That is, $e(\mathfrak{P}|\mathfrak{p})=3$, which by Theorem \ref{RPC} is equivalent to $(v_{\mathfrak{p}}(a), 3)=1$.
\end{enumerate} 
\end{lemma}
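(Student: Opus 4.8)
The plan is to compute the different exponent $d(\mathfrak{P}|\mathfrak{p})$ directly from the minimal polynomial $f(X) = X^3 - a$, distinguishing the tame and wild cases. Since $p \neq 3$ and $[L:K] = 3$, any ramification is tame (as $3 \nmid p$, indeed $e(\mathfrak{P}|\mathfrak{p}) \in \{1,3\}$ is coprime to $p$ whenever $p \neq 3$, and the case $p = 3$ is excluded). So Dedekind's different theorem gives $d(\mathfrak{P}|\mathfrak{p}) = e(\mathfrak{P}|\mathfrak{p}) - 1$. This immediately yields: $d(\mathfrak{P}|\mathfrak{p}) = 0$ iff $e(\mathfrak{P}|\mathfrak{p}) = 1$, and $d(\mathfrak{P}|\mathfrak{p}) = 2$ iff $e(\mathfrak{P}|\mathfrak{p}) = 3$. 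By Theorem \ref{RPC}, $e(\mathfrak{P}|\mathfrak{p}) = 3$ (equivalently, $\mathfrak{p}$ is ramified at all) precisely when $(v_{\mathfrak{p}}(a),3) = 1$, which gives the equivalence asserted in part (2).

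First I would invoke Theorem \ref{RPC}: in a purely cubic extension with generator $y$, $y^3 = a$, a place $\mathfrak{p}$ of $K$ is ramified if and only if it is fully ramified if and only if $(v_{\mathfrak{p}}(a),3) = 1$; so $e(\mathfrak{P}|\mathfrak{p})$ is either $1$ or $3$, and it is $3$ exactly when $(v_{\mathfrak{p}}(a),3)=1$. Next I would note that since $p \neq 3$, the ramification index (being $1$ or $3$) is never divisible by $p$, so $\mathfrak{P}|\mathfrak{p}$ is tamely ramified in every case. Then Dedekind's theorem on the different (see e.g. \cite[Theorem 3.5.1]{Sti}) states that for tame ramification the different exponent equals $e(\mathfrak{P}|\mathfrak{p}) - 1$. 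Substituting the two possible values of $e$ gives $d(\mathfrak{P}|\mathfrak{p}) = 0$ when unramified and $d(\mathfrak{P}|\mathfrak{p}) = 2$ when fully ramified, which is exactly the dichotomy in the statement.

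Alternatively — and this is the route I expect the authors may prefer, since it keeps everything in terms of the explicit generator — one can pick a generator in local standard form at $\mathfrak{p}$ (Lemma \ref{purelocalstandardform}) and compute the different of the order $\mathcal{O}_{K,\mathfrak{p}}[y]$ via $f'(y) = 3y^2$. When $v_{\mathfrak{p}}(a) \in \{1,2\}$, the element $y$ generates the valuation ring at $\mathfrak{P}$ (since $3 v_{\mathfrak{P}}(y) = v_{\mathfrak{P}}(a) = e(\mathfrak{P}|\mathfrak{p}) v_{\mathfrak{p}}(a) = 3 v_{\mathfrak{p}}(a)$ forces $v_{\mathfrak{P}}(y) = v_{\mathfrak{p}}(a)$, which is coprime to $3$, hence $y$ is a uniformizer up to a unit times a power of a uniformizer of $K$... more carefully, one checks $\mathcal{O}_{K,\mathfrak{p}}[y]$ is the full valuation ring), so $d(\mathfrak{P}|\mathfrak{p}) = v_{\mathfrak{P}}(f'(y)) = v_{\mathfrak{P}}(3y^2) = 2 v_{\mathfrak{P}}(y) = 2 v_{\mathfrak{p}}(a)$; combined with the standard-form normalization $v_{\mathfrak{p}}(a) \in \{1,2\}$ one must be slightly careful, but after reducing to $v_{\mathfrak{p}}(a) = 1$ or handling the valuation bookkeeping one gets $d = 2$. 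When $v_{\mathfrak{p}}(a) = 0$ the polynomial $X^3 - a$ is separable mod $\mathfrak{p}$ and $\mathcal{O}_{K,\mathfrak{p}}[y]$ is unramified over $\mathfrak{p}$, so $d(\mathfrak{P}|\mathfrak{p}) = 0$.

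The main obstacle is the bookkeeping in the second approach: one must verify that the order $\mathcal{O}_{K,\mathfrak{p}}[y]$ is actually maximal (equals the integral closure localized at $\mathfrak{P}$) in the ramified case, so that $d(\mathfrak{P}|\mathfrak{p})$ equals $v_{\mathfrak{P}}(f'(y))$ rather than merely being bounded by it — this is where the local standard form $v_{\mathfrak{p}}(a) \in \{1,2\}$ is essential, as it makes $y$ (or $y$ times a suitable power of a uniformizer of $K$) a uniformizer at $\mathfrak{P}$. Using instead the tame-ramification route via Dedekind's theorem avoids this entirely and is cleaner, so I would present that as the proof, perhaps remarking on the explicit computation $d = v_{\mathfrak{P}}(f'(y)) = 2$ as confirmation.
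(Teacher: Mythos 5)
Your proof is correct and follows essentially the same route as the paper: the authors likewise invoke Theorem \ref{RPC} to restrict $e(\mathfrak{P}|\mathfrak{p})$ to $\{1,3\}$ and then apply Dedekind's different theorem (tame ramification, since $p \neq 3$, plus separability of the residue extensions over the perfect field $\mathbb{F}_q$) to get $d(\mathfrak{P}|\mathfrak{p}) = e(\mathfrak{P}|\mathfrak{p}) - 1$. The alternative computation via $f'(y) = 3y^2$ is not needed and is not what the paper does.
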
 
\begin{proof}
By Theorem \ref{RPC}, either $e(\mathfrak{P}|\mathfrak{p})=1$ or $e(\mathfrak{P}|\mathfrak{p})=3$.
 \begin{enumerate} 
\item  As the constant field $\mathbb{F}_q$ of $K$ is perfect, all residue field extensions in $L/K$ are automatically separable. The result then follows from \cite[Theorem 5.6.3]{Vil}.
\item If $e(\mathfrak{P}|\mathfrak{p})=3$, then as $p \nmid 3$, it follows again from [Theorem 5.6.3, Ibid.] that $d(\mathfrak{P}|\mathfrak{p}) = e(\mathfrak{P}|\mathfrak{p}) - 1 =   2$.
\end{enumerate}
\end{proof} 
We thus find the Riemann-Hurwitz formula as follows for purely cubic extensions when the characteristic is not equal to 3, which resembles that of Kummer extensions, but no assumption is made that the extension is Galois.
\begin{theoreme}[Riemann-Hurwitz I]  \label{RHPC}  Let $p \neq 3$. Let $L/K$ be a purely cubic  geometric extension, and $y$ a primitive element of $L$ with minimal polynomial $f(X) = X^3-a$. Then the genus $g_L$ of $L$ is given according to the formula $$g_L =  3g_K - 2 +  \sum_{\substack{  ( v_\mathfrak{p}(a),3) = 1}} d_{K}(\mathfrak{p}).$$
\end{theoreme}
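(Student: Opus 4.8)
The plan is to apply the Riemann-Hurwitz genus formula in its standard form for function fields over a perfect constant field, namely
\[
2g_L - 2 = [L:K](2g_K - 2) + \deg \mathfrak{D}_{L/K},
\]
where $\mathfrak{D}_{L/K}$ is the different, and then to compute $\deg \mathfrak{D}_{L/K}$ explicitly using the ramification analysis already established. Since $[L:K] = 3$ here, the left and leading right terms give $2g_L - 2 = 3(2g_K-2) + \deg\mathfrak{D}_{L/K} = 6g_K - 6 + \deg\mathfrak{D}_{L/K}$, so solving for $g_L$ reduces the problem entirely to the computation of $\deg\mathfrak{D}_{L/K}$.

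The core step is to evaluate $\deg\mathfrak{D}_{L/K} = \sum_{\mathfrak{P}} d(\mathfrak{P}\,|\,\mathfrak{p})\, d_L(\mathfrak{P})$, where $d(\mathfrak{P}|\mathfrak{p})$ is the different exponent and $d_L(\mathfrak{P})$ is the degree of $\mathfrak{P}$ over the constant field. By Theorem \ref{RPC}, the only possibilities at a place $\mathfrak{p}$ of $K$ are that $\mathfrak{p}$ is unramified (in which case every $\mathfrak{P}$ above it contributes $d(\mathfrak{P}|\mathfrak{p}) = 0$) or $\mathfrak{p}$ is fully ramified with $e(\mathfrak{P}|\mathfrak{p}) = 3$, which happens exactly when $(v_\mathfrak{p}(a),3)=1$. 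In the latter case there is a unique place $\mathfrak{P}$ above $\mathfrak{p}$, and by Lemma \ref{diffexpqneq1mod3} we have $d(\mathfrak{P}|\mathfrak{p}) = 2$ (tame ramification, since $p \neq 3$). Moreover, because $\mathfrak{p}$ is totally ramified, $f(\mathfrak{P}|\mathfrak{p}) = 1$, so $d_L(\mathfrak{P}) = f(\mathfrak{P}|\mathfrak{p})\, d_K(\mathfrak{p}) = d_K(\mathfrak{p})$. Hence each fully ramified place contributes exactly $2\, d_K(\mathfrak{p})$ to $\deg\mathfrak{D}_{L/K}$, giving
\[
\deg\mathfrak{D}_{L/K} = 2 \sum_{(v_\mathfrak{p}(a),3)=1} d_K(\mathfrak{p}).
\]

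Substituting back, $2g_L - 2 = 6g_K - 6 + 2\sum_{(v_\mathfrak{p}(a),3)=1} d_K(\mathfrak{p})$, and dividing by $2$ yields $g_L = 3g_K - 2 + \sum_{(v_\mathfrak{p}(a),3)=1} d_K(\mathfrak{p})$, as claimed. One should also note that the sum is finite: since $a \in K^\ast$, it has only finitely many zeros and poles, so only finitely many places $\mathfrak{p}$ satisfy $v_\mathfrak{p}(a) \neq 0$, and all places with $v_\mathfrak{p}(a) = 0$ are excluded from the sum. The hypothesis that $L/K$ is geometric guarantees $\mathbb{F}_q$ is the full constant field of $L$ as well, so that the genus formula is being applied over the correct constant field; this is where non-constancy of the extension is used. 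I expect the only subtlety — hardly an obstacle — is making sure the different exponent is genuinely $2$ and not larger, which is precisely the content of Lemma \ref{diffexpqneq1mod3}(2) via tameness ($p \nmid e(\mathfrak{P}|\mathfrak{p}) = 3$), and confirming the finiteness of the ramification locus.
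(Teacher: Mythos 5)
Your proof is correct and follows essentially the same route as the paper: the paper's own proof is a one-line citation of the Riemann--Hurwitz theorem, Lemma \ref{diffexpqneq1mod3} (different exponent $2$ at fully ramified places, $0$ otherwise), and the fundamental identity $\sum e_if_i=3$ forcing $f(\mathfrak{P}|\mathfrak{p})=1$ so that $d_L(\mathfrak{P})=d_K(\mathfrak{p})$. You have simply written out the arithmetic that the paper leaves implicit, and your added remarks on finiteness of the ramification locus and the role of the geometric hypothesis are accurate.
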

\begin{proof}
This follows from Lemma \ref{diffexpqneq1mod3}, \cite[Theorem 9.4.2]{Vil}, and the fundamental identity $\sum e_i f_i = [L:K] = 3$. 
\end{proof} 
\subsection{$X^3 -3X -a$, $a\in K$, $p\neq 3$} 
\begin{lemma} 
 \label{diffexpqneq1mod3}
 Let $p \neq 3$. Let $L/K$ be an impurely cubic extension and $y$ a primitive element of $L$ with minimal polynomial $f(X) = X^3-3X-a$. Let $\mathfrak{p}$ be a place of $K$ and $\mathfrak{P}$ a place of $L$ over $\mathfrak{p}$. Let $\Delta=-27(a^2-4)$ be the discriminant of $f(X)$ and $r \in \overline{K}$ a root of the quadratic resolvent $R(X)= X^2 +3aX + ( -27 + 9 a^2)$ of $f(X)$. Then the following are true:
\begin{enumerate} 
\item $d(\mathfrak{P}|\mathfrak{p}) =0$ if, and only if, $e(\mathfrak{P}|\mathfrak{p})=1$. 
\item If $e(\mathfrak{P}|\mathfrak{p})=3$, which by Theorem \ref{ramification} is equivalent to $v_\mathfrak{p}(a) < 0$ and $(v_{\mathfrak{p}}(a), 3)=1$, then $d(\mathfrak{P}|\mathfrak{p}) =2$. 
\item If $e(\mathfrak{P}|\mathfrak{p})=2$, 
\begin{enumerate} 
\item If $p \neq 2$, by Theorem \ref{ramification}, this occurs precisely when $\Delta$ is not a square in $K$, $a\equiv \pm 2 \mod \mathfrak{p}$,  $( v_{\mathfrak{p}}(\Delta ), 2)=1$. In this case, $2 \;| \;v_{\mathfrak{P}}(\Delta )$ and $d(\mathfrak{P}|\mathfrak{p})=1$.
\item If $p = 2$, by Theorem \ref{ramification}, this occurs when $r\notin K$, $a\equiv 0 \mod \mathfrak{p}$, there is $w_\mathfrak{p} \in K$ such that $v_{\mathfrak{p}}(\left( \frac{1}{a^2}+1 -w_\mathfrak{p}^2+w_\mathfrak{p} \right) , 2)=1\quad\text{and}\quad v_{\mathfrak{p}}\left( \frac{1}{a^2}+1 -w_\mathfrak{p}^2+w_\mathfrak{p} \right)<0$. Also, in this case, there exists $\eta_\mathfrak{P} \in L$ such that $v_{\mathfrak{P}}\left( \frac{1}{a^2}+1 -\eta_\mathfrak{P} ^2+\eta_\mathfrak{P} \right)\geq 0,$ and we have for this $\mathfrak{P}$ that $$d(\mathfrak{P}|\mathfrak{p})=-v_{\mathfrak{p}}\left( \frac{1}{a^2} +1 -w_\mathfrak{p}^2+w_\mathfrak{p} \right)+1 .$$
\end{enumerate} 
\end{enumerate} 
\end{lemma}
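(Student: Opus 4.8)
The plan is to dispatch the four cases in increasing order of difficulty, the only substantial one being the wildly ramified subcase (3)(b) with $p=2$.

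Cases (1), (2), (3)(a) follow from Theorem~\ref{ramification} together with the standard local different computation. For (1): since the constant field $\mathbb{F}_q$ is perfect, every residue extension in $L/K$ is separable, so $d(\mathfrak{P}|\mathfrak{p})=0$ if and only if $\mathfrak{P}|\mathfrak{p}$ is unramified, i.e. $e(\mathfrak{P}|\mathfrak{p})=1$; by Theorem~\ref{ramification} the only possibilities for $e(\mathfrak{P}|\mathfrak{p})$ are $1,2,3$. For (2): $e(\mathfrak{P}|\mathfrak{p})=3$ with $p\neq 3$ is tame, so $d(\mathfrak{P}|\mathfrak{p})=e(\mathfrak{P}|\mathfrak{p})-1=2$ by \cite[Theorem 5.6.3]{Vil}. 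For (3)(a): $e(\mathfrak{P}|\mathfrak{p})=2$ with $p\neq 2$ is again tame, so $d(\mathfrak{P}|\mathfrak{p})=1$; moreover $\Delta\in K$, hence $v_{\mathfrak{P}}(\Delta)=e(\mathfrak{P}|\mathfrak{p})\,v_{\mathfrak{p}}(\Delta)=2v_{\mathfrak{p}}(\Delta)$ is even, and the stated characterization of this case is precisely what Theorem~\ref{ramification} provides.

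For (3)(b) I would work inside the Galois closure $L(r)/K$, where $r$ is a root of the resolvent $R(X)=X^2+3aX+(-27+9a^2)$; fix a place $\mathfrak{P}_r$ of $L(r)$ over $\mathfrak{P}$ and set $\mathfrak{p}_r=\mathfrak{P}_r\cap K(r)$. First I would identify $K(r)/K$ as an Artin--Schreier extension: when $p=2$ the substitution $X=aY$ brings $R(X)=0$ into the form $Y^2-Y=1/a^2+1$, so $K(r)/K$ is the Artin--Schreier extension attached to the class of $1/a^2+1$ (which differs from the representative $1/a+1$ used in the proof of Theorem~\ref{ramification} by $(1/a)^2-(1/a)$, hence defines the same extension). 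From that proof, when $\mathfrak{p}$ is partially ramified in $L$ one has $\mathfrak{p}\,\mathcal{O}_{L(r),x}=(\mathfrak{P}_{1,r}\mathfrak{P}_{2,r}\mathfrak{P}_{3,r})^2$; in particular $e(\mathfrak{p}_r|\mathfrak{p})=2$, the place $\mathfrak{p}_r$ splits completely in $L(r)/K(r)$, and (ramification indices being multiplicative in towers, with $e(\mathfrak{P}|\mathfrak{p})=2$) both $\mathfrak{P}_r|\mathfrak{P}$ and $\mathfrak{P}_r|\mathfrak{p}_r$ are unramified, so $d(\mathfrak{P}_r|\mathfrak{P})=d(\mathfrak{P}_r|\mathfrak{p}_r)=0$. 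Applying the tower formula for the different along $L(r)/L/K$ and along $L(r)/K(r)/K$ then yields $d(\mathfrak{P}|\mathfrak{p})=d(\mathfrak{P}_r|\mathfrak{p})=d(\mathfrak{p}_r|\mathfrak{p})$, reducing the computation to the Artin--Schreier extension $K(r)/K$ at $\mathfrak{p}$.

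Finally I would compute $d(\mathfrak{p}_r|\mathfrak{p})$ via \cite[Proposition 3.7.8]{Sti}: choosing $w_\mathfrak{p}\in K$ so that the representative $1/a^2+1-w_\mathfrak{p}^2+w_\mathfrak{p}$ of the Artin--Schreier class has $v_\mathfrak{p}<0$ and $(v_\mathfrak{p}(1/a^2+1-w_\mathfrak{p}^2+w_\mathfrak{p}),2)=1$ (possible exactly in the ramified case, by Theorem~\ref{ramification}), the different exponent equals $(p-1)\bigl(-v_\mathfrak{p}(1/a^2+1-w_\mathfrak{p}^2+w_\mathfrak{p})+1\bigr)=-v_\mathfrak{p}(1/a^2+1-w_\mathfrak{p}^2+w_\mathfrak{p})+1$, which is the asserted formula. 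The existence of $\eta_\mathfrak{P}\in L$ with $v_\mathfrak{P}(1/a^2+1-\eta_\mathfrak{P}^2+\eta_\mathfrak{P})\geq 0$ is the ``unramified'' half of \cite[Proposition 3.7.8]{Sti} applied to the Artin--Schreier extension $L(r)/L$ at $\mathfrak{P}$, which is legitimate since $\mathfrak{P}_r|\mathfrak{P}$ is unramified. I expect the main obstacle to be essentially bookkeeping: keeping straight the characteristic-$2$ identities (such as $-w^2+w=w^2-w$ and the Artin--Schreier equivalence of $1/a^2+1$ and $1/a+1$), and invoking with care the splitting statement for $\mathfrak{p}$ in $L(r)$ established inside the proof of Theorem~\ref{ramification}.
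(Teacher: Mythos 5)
Your proposal is correct and follows essentially the same route as the paper: the tame cases (1), (2), (3)(a) via \cite[Theorem 5.6.3]{Vil}, and the wild case $p=2$ via comparing the towers $L(r)/K(r)/K$ and $L(r)/L/K$, Artin--Schreier theory for $K(r)/K$, and the transitivity formula for the different, with $d(\mathfrak{P}_r|\mathfrak{P})=d(\mathfrak{P}_r|\mathfrak{p}_r)=0$ forcing $d(\mathfrak{P}|\mathfrak{p})=d(\mathfrak{p}_r|\mathfrak{p})$. Your explicit check that $1/a^2+1$ and $1/a+1$ differ by $(1/a)^2-(1/a)$ and hence define the same Artin--Schreier class is a helpful clarification that the paper leaves implicit, but it does not change the argument.
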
 
\begin{proof}
Let $\mathfrak{p}$ be a place of $K$, $\mathfrak{P}_r$ a place of $L(r)$ above $\mathfrak{p}$,  $\mathfrak{P}=\mathfrak{P}_r\cap L$, and $\mathfrak{p}_r=\mathfrak{P}_r\cap K(r)$.
 \begin{enumerate} 
\item  As the constant field $\mathbb{F}_q$ of $K$ is perfect, all residue field extensions in $L/F$ are automatically separable. The result then follows from \cite[Theorem 5.6.3]{Vil}.
\item If $e(\mathfrak{P}|\mathfrak{p})=3$, then as $p \nmid 3$, it follows again from [Theorem 5.6.3, Ibid.] that $d(\mathfrak{P}|\mathfrak{p}) = e(\mathfrak{P}|\mathfrak{p}) - 1 =   2$.
\item When $e(\mathfrak{P}|\mathfrak{p})=2$, 
\begin{enumerate} 
\item  if $p\neq 2$, then by [Theorem 5.6.3, Ibid.], $d(\mathfrak{P}|\mathfrak{p}) = e(\mathfrak{P}|\mathfrak{p}) - 1 =   1$.
\item if $p= 2$, then we work on the tower $L(r)/K(r)/ K$. If $e(\mathfrak{P}|\mathfrak{p})=2$, then $e(\mathfrak{p}_r|\mathfrak{p})=2$, $e(\mathfrak{P}_r|\mathfrak{p}_r)=1$ and $e(\mathfrak{P}_r|\mathfrak{P})=1$. As $p=2$, the extension $K(r)/K$ is Artin-Schreier and is generated by an element $\alpha$ such that $\alpha^2 - \alpha = \frac{1}{a^2}+1$. By Artin-Schreier theory (see \cite[Theorem 3.7.8]{Sti}), as $ e(\mathfrak{p}_r|\mathfrak{p})=2$, there exists an element $w_\mathfrak{p} \in K$ such that $$(v_{\mathfrak{p}}\left( \frac{1}{a^2}+1 -w_\mathfrak{p}^2+w_\mathfrak{p} \right) , 2)=1\qquad\text{and}\qquad v_{\mathfrak{p}}\left( \frac{1}{a^2}+1 -w_\mathfrak{p}^2+w_\mathfrak{p} \right)<0.$$ 
In addition, since $e(\mathfrak{P}_r|\mathfrak{P})=1$, there exists $\eta_\mathfrak{P} \in L$ such that $$v_{\mathfrak{P}}\left( \frac{1}{a^2}+1 -\eta_\mathfrak{P} ^2+\eta_\mathfrak{P} \right)\geq 0.$$ By Artin-Schreier theory (see \cite[Theorem 3.7.8]{Sti}), we obtain
 $$d(\mathfrak{p}_r | \mathfrak{p})=  -v_{\mathfrak{p}}\left( \frac{1}{a^2}+1 -w_\mathfrak{p}^2+w_\mathfrak{p} \right)+1.$$ By \cite[Theorem 5.7.15]{Vil}, we then find by equating differential exponents in the towers $L(r)/K(r)/K$ and $L(r)/L/K$ that
 $$ d( \mathfrak{P}_r| \mathfrak{p})= d( \mathfrak{P}_r| \mathfrak{P})+ e( \mathfrak{P}_r| \mathfrak{P}) d( \mathfrak{P}| \mathfrak{p})= d( \mathfrak{P}_r| \mathfrak{p}_r)+ e( \mathfrak{P}_r| \mathfrak{p}_r) d( \mathfrak{p}_r| \mathfrak{p}).$$
This implies that
 $$  d( \mathfrak{P}| \mathfrak{p})=  d( \mathfrak{p}_r| \mathfrak{p})= -v_{\mathfrak{p}}\left( \frac{1}{a^2}+1 -w_\mathfrak{p}^2+w_\mathfrak{p} \right)+1,$$
as $e(\mathfrak{P}_r|\mathfrak{P})=e(\mathfrak{P}_r|\mathfrak{p}_r)=1$ implies $d( \mathfrak{P}_r| \mathfrak{P})=d(\mathfrak{P}_r|\mathfrak{p}_r)=0$.
\end{enumerate} 
\end{enumerate} 
\end{proof}
We are now able to state and prove the Riemann-Hurwitz formula for this cubic form. 
\begin{theoreme}[Riemann-Hurwitz II] \label{RH} Let $p \neq 3$. Let $L/K$ be a cubic  geometric extension and $y$ a primitive element of $L$ with minimal polynomial $f(X) = X^3-3X-a$. Let $\Delta=-27(a^2-4)$ be the discriminant of $f(X)$ and $r$ a root of the quadratic resolvent $R(X)= X^2 +3aX + ( -27 + 9 a^2)$ of the cubic polynomial $X^3-3X-a$ in $\overline{K}$.
Then the genus $g_L$ of $L$ is given according to the formula
\begin{enumerate} 
\item If $p \neq 2$, then $$g_L =  3g_K - 2 +\frac{1}{2}\sum_{\mathfrak{p}\in \mathcal{S}} d_{K}(\mathfrak{p})+  \sum_{\substack{ v_\mathfrak{p}(a)<0\\ ( v_\mathfrak{p}(a),3) = 1}} d_{K}(\mathfrak{p}).$$
where $\mathcal{S}$ is the set of places of $K$ such that both $a\equiv \pm 2 \mod \mathfrak{p}$ and $  v_{\mathfrak{p}}(\Delta, 2)=1$. Moreover, $\Delta$ is a square in $K$ up to a unit if, and only if, the set $\mathcal{S}$ is empty.
\item If $p = 2$, then $$g_L =  3g_K - 2 +  \frac{1}{2} \sum_{\mathfrak{p}\in \mathcal{S}} [ -v_{\mathfrak{p}}\left( \frac{1}{a}+1 -w_\mathfrak{p}^2+w_\mathfrak{p} \right)+1] d_{K}(\mathfrak{p})+  \sum_{\substack{ v_\mathfrak{p}(a)<0\\  ( v_\mathfrak{p}(a),3) = 1}} d_{K}(\mathfrak{p}),$$
where $\mathcal{S}$ is the set of places of $K$ such that both $a \equiv 0 \mod \mathfrak{p}$ and there exists $w_\mathfrak{p} \in K$ such that $v_{\mathfrak{p}}\left( \frac{1}{a}+1 -w_\mathfrak{p}^2+w_\mathfrak{p} \right)<0$ and $(v_{\mathfrak{p}}\left( \frac{1}{a}+1 -w_\mathfrak{p}^2+w_\mathfrak{p} \right) , 2)=1$. Moreover, if $r \in K$ (hence the extension $L/K$ is Galois), then the set $\mathcal{S}$ is empty.
\end{enumerate}
\end{theoreme}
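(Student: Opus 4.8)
The plan is to apply the Hurwitz genus formula \cite[Theorem 9.4.2]{Vil} to the degree-$3$ extension $L/K$, namely $2g_L - 2 = 3(2g_K - 2) + \deg \mathfrak{D}_{L/K}$, and to evaluate $\deg\mathfrak{D}_{L/K}$ by summing local contributions place by place; the local data needed are precisely those assembled in Lemma \ref{diffexpqneq1mod3}, together with the ramification classification of Theorem \ref{ramification}, the identity $\deg\mathfrak{P} = f(\mathfrak{P}\mid\mathfrak{p})\,d_K(\mathfrak{p})$, and the fundamental identity $\sum_{\mathfrak{P}\mid\mathfrak{p}} e(\mathfrak{P}\mid\mathfrak{p})f(\mathfrak{P}\mid\mathfrak{p}) = 3$.

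First I would record that, as $3$ is prime, the only decomposition types of a place $\mathfrak{p}$ of $K$ in $L$ involving ramification are $\mathfrak{p}\mathcal{O}_{L,x} = \mathfrak{P}^3$ with $f(\mathfrak{P}\mid\mathfrak{p}) = 1$, and $\mathfrak{p}\mathcal{O}_{L,x} = \mathfrak{P}_1^2\mathfrak{P}_2$ with $f(\mathfrak{P}_1\mid\mathfrak{p}) = f(\mathfrak{P}_2\mid\mathfrak{p}) = 1$ and $\mathfrak{P}_2$ unramified. Since $\mathbb{F}_q$ is perfect, every residue extension in $L/K$ is separable, so unramified primes have zero different exponent and contribute nothing to $\deg\mathfrak{D}_{L/K}$; thus only the fully ramified and partially ramified places of $K$ matter.

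Next I would compute the two nonzero local contributions. For a fully ramified place --- by Theorem \ref{ramification} precisely those $\mathfrak{p}$ with $v_\mathfrak{p}(a) < 0$ and $(v_\mathfrak{p}(a),3) = 1$ --- the ramification is tame because $p \neq 3$, so $d(\mathfrak{P}\mid\mathfrak{p}) = e(\mathfrak{P}\mid\mathfrak{p}) - 1 = 2$ by \cite[Theorem 5.6.3]{Vil}, i.e. Lemma \ref{diffexpqneq1mod3}(2), and such a place contributes $2\,d_K(\mathfrak{p})$. For a partially ramified place $\mathfrak{p} \in \mathcal{S}$, writing $\mathfrak{p}\mathcal{O}_{L,x} = \mathfrak{P}_1^2\mathfrak{P}_2$, the prime $\mathfrak{P}_2$ contributes $0$ and the contribution of $\mathfrak{P}_1$ is $d(\mathfrak{P}_1\mid\mathfrak{p})\,d_K(\mathfrak{p})$; by Lemma \ref{diffexpqneq1mod3}(3) this exponent equals $1$ when $p \neq 2$ (again tame), and equals $-v_\mathfrak{p}\!\bigl(\tfrac{1}{a} + 1 - w_\mathfrak{p}^2 + w_\mathfrak{p}\bigr) + 1$ when $p = 2$, the wild value obtained there by passing to the tower $L(r)/K(r)/K$. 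Summing over all places of $K$ and substituting into the Hurwitz genus formula gives, for $p \neq 2$,
\[
2g_L - 2 = 6g_K - 6 + \sum_{\mathfrak{p}\in\mathcal{S}} d_K(\mathfrak{p}) + 2\!\!\sum_{\substack{v_\mathfrak{p}(a)<0\\(v_\mathfrak{p}(a),3)=1}}\!\! d_K(\mathfrak{p}),
\]
and dividing by $2$ produces the stated formula, the factor $\tfrac{1}{2}$ in front of the $\mathcal{S}$-sum being exactly this division; the case $p = 2$ is identical with the summand $d_K(\mathfrak{p})$ over $\mathcal{S}$ replaced by $\bigl(-v_\mathfrak{p}(\tfrac{1}{a} + 1 - w_\mathfrak{p}^2 + w_\mathfrak{p}) + 1\bigr)d_K(\mathfrak{p})$.

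Finally I would dispatch the supplementary assertions. When $p \neq 2$: since $v_\mathfrak{p}(27) = 0$ and the requirement $a \equiv \pm 2 \bmod \mathfrak{p}$ forces $v_\mathfrak{p}(a) \geq 0$, hence $v_\mathfrak{p}(\Delta) = v_\mathfrak{p}(a^2 - 4) \geq 0$, one checks that $\mathcal{S}$ is exactly the set of places at which $v_\mathfrak{p}(\Delta)$ is odd; so $\mathcal{S} = \emptyset$ if and only if $v_\mathfrak{p}(\Delta)$ is even at every place of $K$, i.e. $\Delta$ is a square in $K$ up to a unit. When $p = 2$: if $r \in K$, then the Galois closure $L(r)$ of $L/K$ equals $L$, so $L/K$ is Galois and by Theorem \ref{ramification}(1) every ramified place is fully ramified, whence $\mathcal{S} = \emptyset$. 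The computation is almost entirely bookkeeping: the substantive inputs --- the ramification classification and, in characteristic $2$, the wild different exponent --- are already available, so the only points needing care are matching the partial-ramification decomposition $\mathfrak{P}_1^2\mathfrak{P}_2$ against the local data of Lemma \ref{diffexpqneq1mod3}, and verifying the equivalence in the last clause between emptiness of $\mathcal{S}$ and $\Delta$ being a square in $K$ up to a unit.
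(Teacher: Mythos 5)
Your proposal is correct and follows essentially the same route as the paper: apply the Hurwitz genus formula from \cite[Theorem 9.4.2]{Vil}, use the fundamental identity $\sum e_i f_i = 3$ to see that $f = 1$ at ramified places, and read the different exponents off Lemma \ref{diffexpqneq1mod3} (tame exponents $2$ and $1$, and the wild Artin--Schreier exponent in characteristic $2$). Your treatment is in fact more explicit than the paper's, which leaves the bookkeeping and the two supplementary ``Moreover'' clauses to the reader.
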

\begin{proof} \begin{enumerate} \item By \cite[Theorem 9.4.2]{Vil}, the term associated with a place $\mathfrak{P}$ of $L$ in the different $\mathfrak{D}_{L/F}$ contributes $\frac{1}{2} d_L(\mathfrak{P})^{d(\mathfrak{P}|\mathfrak{p})}$ to the genus of $L$, where $\mathfrak{p}$ is the place of $K$ below $\mathfrak{P}$, $d_L(\mathfrak{P})$ is the degree of the place $\mathfrak{P}$, and $d(\mathfrak{P}|\mathfrak{p})$ is the differential exponent of $\mathfrak{P}|\mathfrak{p}$. By the fundamental identity $\sum_i e_i f_i = [L:K] = 3$ for ramification indices $e_i$ and inertia degrees $f_i$ of all places of $L$ above $\mathfrak{p}$, we always have that $f_i=1$ whenever $\mathfrak{p}$ ramifies in $L$ (fully or partially). Thus from Lemma  \ref{diffexpqneq1mod3}, it follows that $d(\mathfrak{P}|\mathfrak{p}) = 2$ if $\mathfrak{p}$ is fully ramified, whereas $d(\mathfrak{P}|\mathfrak{p}) = 1$ if $\mathfrak{p}$ is partially ramified. The result then follows by reading off [Theorem 9.4.2, Ibid.] and using the conditions of Lemma \ref{diffexpqneq1mod3}. \item This follows in a manner similar to part (1) of this theorem, via Lemma \ref{diffexpqneq1mod3} for $p=2$. \end{enumerate}
\end{proof}
We obtain directly the following corollary when the extension $L/K$ is Galois.
\begin{corollaire} \label{RHGalois} Let $p \neq 3$. Let $L/K$ be a Galois cubic  geometric extension and $y$ a primitive element of $L$ with minimal polynomial $f(X) = X^3-3X-a$. Then the genus $g_L$ of $L$ is given according to the formula $$g_L =  3g_K - 2 +  \sum_{\substack{v_\mathfrak{p}(a)<0\\  ( v_\mathfrak{p}(a),3) = 1}} d_{K}(\mathfrak{p}).$$
\end{corollaire}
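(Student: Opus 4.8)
The plan is to obtain this corollary as the Galois specialization of Theorem \ref{RH}: I will show that the auxiliary set $\mathcal{S}$ occurring in both parts of that theorem is empty whenever $L/K$ is Galois, so that the middle sum vanishes and what remains is exactly the asserted formula $g_L = 3g_K - 2 + \sum_{v_\mathfrak{p}(a)<0,\ (v_\mathfrak{p}(a),3)=1} d_{K}(\mathfrak{p})$.

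For $p \neq 2$ I would argue as follows. A separable cubic $L/K$ is Galois if and only if the discriminant of its minimal polynomial is a square in $K$; since $\mathrm{Gal}(L/K) \cong \mathbb{Z}/3\mathbb{Z}$ acts on the three roots of $f(X) = X^3 - 3X - a$ in $\overline{K}$ by even permutations, it fixes $\sqrt{\Delta} = \prod_{i<j}(y_i - y_j)$ with $\Delta = -27(a^2-4)$, so $\sqrt{\Delta} \in K$. Hence $v_\mathfrak{p}(\Delta) = 2\, v_\mathfrak{p}(\sqrt{\Delta})$ is even at every place $\mathfrak{p}$ of $K$, so $(v_\mathfrak{p}(\Delta), 2) \neq 1$ for every $\mathfrak{p}$; by the description of $\mathcal{S}$ in Theorem \ref{RH}(1) (equivalently, by the remark there that $\mathcal{S} = \emptyset$ precisely when $\Delta$ is a square up to a unit) we get $\mathcal{S} = \emptyset$. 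For $p = 2$, Galois-ness of $L/K$ means the quadratic resolvent $R(X)$ has a root $r \in K$, i.e. $K(r) = K$; the statement of Theorem \ref{RH}(2) already records $\mathcal{S} = \emptyset$ in this case, which one reads off from the proof of Theorem \ref{ramification}(2): any partially ramified place would have to ramify in the quadratic subextension $K(r)/K$ of the Galois closure $L(r)/K$, which is impossible once $K(r) = K$.

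With $\mathcal{S} = \emptyset$ in every characteristic $p \neq 3$, both formulae of Theorem \ref{RH} reduce to the claimed one, which proves the corollary. (If one prefers an argument not routed through Theorem \ref{RH}: $\mathrm{Gal}(L/K) \cong \mathbb{Z}/3\mathbb{Z}$ has no subgroup of order $2$, so every inertia group is trivial or the whole group, hence $e(\mathfrak{P}|\mathfrak{p}) \in \{1,3\}$ and there are no partially ramified places; by Theorem \ref{ramification} the fully ramified places are exactly those $\mathfrak{p}$ with $v_\mathfrak{p}(a) < 0$ and $(v_\mathfrak{p}(a),3)=1$, each of differential exponent $2$ by Lemma \ref{diffexpqneq1mod3}, and one applies the Hurwitz genus formula \cite[Theorem 9.4.2]{Vil} together with the fundamental identity $\sum e_i f_i = 3$.) The only genuinely substantive point is the vanishing of $\mathcal{S}$, equivalently the absence of partially ramified places, and within that the characteristic-$2$ case, which rests on the Artin-Schreier/Kummer analysis of $K(r)/K$ carried out in the proof of Theorem \ref{ramification}; that is where I would concentrate the effort.
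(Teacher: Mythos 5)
Your proposal is correct and follows the same route the paper intends: the corollary is stated there as following \emph{directly} from Theorem \ref{RH}, and your argument simply makes explicit why the set $\mathcal{S}$ of partially ramified places is empty in the Galois case (for $p\neq 2$ because the discriminant is a square so $v_\mathfrak{p}(\Delta)$ is everywhere even, for $p=2$ because $r\in K$, and structurally because $\mathrm{Gal}(L/K)\cong \mathbb{Z}/3\mathbb{Z}$ admits no ramification index $2$). Nothing is missing; your write-up supplies the justification the paper leaves implicit.
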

\subsection{$X^3 +aX +a^2$, $a\in K$, $p=3$} 

\begin{lemma}\label{char3diffexp}
Suppose that $p = 3$. Let $L/K$ be a separable cubic extension and $y$ a primitive element with minimal polynomial $X^3 + aX +  a^2$. Let $\mathfrak{p}$ be a place of $K$ and $\mathfrak{P}$ a place of $L$ above $\mathfrak{p}$. 
\begin{enumerate}
\item  $d(\mathfrak{P}|\mathfrak{p}) =0$ if, and only if, $e(\mathfrak{P}|\mathfrak{p})=1$. 

\item when $e(\mathfrak{P}|\mathfrak{p})=3$, by Theorem \ref{char3localstandardform}, there is $w_\mathfrak{p} \in K$ such that $v_\mathfrak{p}(\alpha_\mathfrak{p}  ) < 0$ and $(v_\mathfrak{p}(\alpha_\mathfrak{p}  ),3)=1$ with $$\alpha_\mathfrak{p}  = \frac{(ja^2 + (w_\mathfrak{p} ^3 + a w_\mathfrak{p} ) )^2}{a^3}.$$ Then $d ( \mathfrak{P}| \mathfrak{p}) = -v_{\mathfrak{p}}(\alpha_\mathfrak{p} )+2$.
\item $d(\mathfrak{P}|\mathfrak{p}) = 1$ whenever $e(\mathfrak{P}|\mathfrak{p})=2$. Moreover, by Lemma \ref{char3localstandardform}, when $e(\mathfrak{P}|\mathfrak{p})=2$, there is generator $z_\mathfrak{p} $ such that $z_\mathfrak{p} ^3 +c_\mathfrak{p}  z_\mathfrak{p}  +c _\mathfrak{p} ^2 =0$ and $v_\mathfrak{p}(c_\mathfrak{p}  ) \geq 0$ and $(v_\mathfrak{p} (c_\mathfrak{p}),2)=1$. 
\end{enumerate}
\end{lemma}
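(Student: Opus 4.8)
The plan is to treat the three items separately; parts (1) and (3) are immediate consequences of Dedekind's different theorem, and only part (2), where the ramification is wild (since $e(\mathfrak{P}|\mathfrak{p})=3=p$), requires real work. For (1): because the constant field $\mathbb{F}_q$ is perfect, every residue extension occurring in $L/K$ is separable, so \cite[Theorem 5.6.3]{Vil} applies and gives $d(\mathfrak{P}|\mathfrak{p})=0$ exactly when $\mathfrak{P}|\mathfrak{p}$ is unramified, i.e. $e(\mathfrak{P}|\mathfrak{p})=1$. For (3): if $e(\mathfrak{P}|\mathfrak{p})=2$ then, as $p=3\nmid 2$, the place $\mathfrak{P}|\mathfrak{p}$ is tamely ramified, so \cite[Theorem 5.6.3]{Vil} gives $d(\mathfrak{P}|\mathfrak{p})=e(\mathfrak{P}|\mathfrak{p})-1=1$; the final assertion of (3) is just a restatement of Theorem \ref{char3localstandardform}(2), noting that $v_\mathfrak{p}(\alpha)$ and $v_\mathfrak{p}(a)$ have the same parity (since $\alpha$ is a square divided by $a^3$), so that $v_\mathfrak{p}(c_\mathfrak{p})\ge 0$ together with $(v_\mathfrak{p}(a),2)=1$ forces $(v_\mathfrak{p}(c_\mathfrak{p}),2)=1$.

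For part (2) I would first invoke Theorem \ref{char3localstandardform}(1) to replace $y$ by a generator $z$ of $L$ in local standard form at $\mathfrak{p}$: its minimal polynomial is $X^3+\alpha X+\alpha^2$ with $\alpha=\alpha_\mathfrak{p}$, $v_\mathfrak{p}(\alpha)<0$ and $(v_\mathfrak{p}(\alpha),3)=1$; set $m:=-v_\mathfrak{p}(\alpha)>0$. Then, as in the proof of that theorem, I pass to the Galois closure: choose $b$ with $b^2=-\alpha$, so that $u:=z/b$ satisfies $u^3-u+b=0$ (a one-line substitution in $z^3+\alpha z+\alpha^2=0$), hence $L(b)/K(b)$ is Artin--Schreier. (If $L/K$ is already Galois then $b\in K$ and the same computation is carried out directly over $K$; otherwise $[K(b):K]=[L(b):L]=2$.) Fix compatible places $\mathfrak{p}\subseteq\mathfrak{p}_b\subseteq\mathfrak{P}_b$ with $\mathfrak{P}_b\cap L=\mathfrak{P}$, and write $e_0:=e(\mathfrak{p}_b|\mathfrak{p})\in\{1,2\}$. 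Since $p\neq 2$, the quadratic extensions $K(b)/K$ and $L(b)/L$ are tame, so by Dedekind $d(\mathfrak{p}_b|\mathfrak{p})=e_0-1$ and $d(\mathfrak{P}_b|\mathfrak{P})=e(\mathfrak{P}_b|\mathfrak{P})-1$. From $2v_{\mathfrak{p}_b}(b)=e_0\,v_\mathfrak{p}(\alpha)$, together with the fact that $\mathfrak{p}$ ramifies in $K(b)/K$ precisely when $v_\mathfrak{p}(\alpha)$ is odd, one reads off $v_{\mathfrak{p}_b}(b)=-e_0 m/2$, a negative integer coprime to $3$; thus $-b$ is already a reduced Artin--Schreier parameter at $\mathfrak{p}_b$, so $\mathfrak{p}_b$ is totally ramified in $L(b)$, whence $e(\mathfrak{P}_b|\mathfrak{p}_b)=3$, $e(\mathfrak{P}_b|\mathfrak{P})=e_0$ by multiplicativity, and \cite[Proposition 3.7.8]{Sti} gives $d(\mathfrak{P}_b|\mathfrak{p}_b)=2(|v_{\mathfrak{p}_b}(b)|+1)=e_0 m+2$.

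To finish, I would apply the transitivity of the different \cite[Theorem 5.7.15]{Vil} in the towers $L(b)/K(b)/K$ and $L(b)/L/K$:
$$d(\mathfrak{P}_b|\mathfrak{p})=d(\mathfrak{P}_b|\mathfrak{p}_b)+e(\mathfrak{P}_b|\mathfrak{p}_b)\,d(\mathfrak{p}_b|\mathfrak{p})=d(\mathfrak{P}_b|\mathfrak{P})+e(\mathfrak{P}_b|\mathfrak{P})\,d(\mathfrak{P}|\mathfrak{p}).$$
Substituting the values found above ($d(\mathfrak{P}_b|\mathfrak{p}_b)=e_0 m+2$, $e(\mathfrak{P}_b|\mathfrak{p}_b)=3$, $d(\mathfrak{p}_b|\mathfrak{p})=e_0-1$, $d(\mathfrak{P}_b|\mathfrak{P})=e_0-1$, $e(\mathfrak{P}_b|\mathfrak{P})=e_0$) and solving for $d(\mathfrak{P}|\mathfrak{p})$, the common factor $e_0$ cancels and one obtains $d(\mathfrak{P}|\mathfrak{p})=m+2=-v_\mathfrak{p}(\alpha_\mathfrak{p})+2$; equivalently one may split into the cases $e_0=1$ ($\mathfrak{p}$ unramified in $K(b)/K$, so $m$ even) and $e_0=2$ and verify each directly. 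The main obstacle lies in part (2): one must check that the Artin--Schreier parameter $-b$ is genuinely in reduced form at $\mathfrak{p}_b$ (so that the conductor--different formula of \cite[Proposition 3.7.8]{Sti} applies without further normalisation) and keep the ramification indices and differential exponents consistent throughout the degree-$6$ tower; the remainder is routine bookkeeping with Dedekind's theorem and the tower formula.
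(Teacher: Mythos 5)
Your proposal is correct and follows essentially the same route as the paper: pass to the Galois closure $L(b)$ with $b^2=-\alpha_\mathfrak{p}$, use tameness of the quadratic steps and the Artin--Schreier conductor--different formula \cite[Proposition 3.7.8]{Sti} for $L(b)/K(b)$, then solve for $d(\mathfrak{P}|\mathfrak{p})$ via transitivity of the different in the two towers $L(b)/K(b)/K$ and $L(b)/L/K$. The only (cosmetic) differences are that you treat the cases $e(\mathfrak{p}_b|\mathfrak{p})=1,2$ uniformly through the parameter $e_0$ where the paper splits them, and you make explicit the check that $v_{\mathfrak{p}_b}(b)$ is negative and coprime to $3$ so that the Artin--Schreier parameter is in reduced form.
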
 
\begin{proof}
Let $b \in \overline{K}$ such that $b^2 = -a$, $\mathfrak{p}$ be a place of $K$, $\mathfrak{P}_b$ be a place of $L(b)$ above $\mathfrak{p}$, $\mathfrak{p}_b=\mathfrak{P}_b\cap K(b)$, $\mathfrak{P}= \mathfrak{P}_b\cap L$.
\begin{enumerate}
\item This is an immediate consequence of \cite[Theorem 5.6.3]{Vil}.
\item Suppose that $\mathfrak{p}$ is ramified in $L$, whence $\mathfrak{p}_b$ is ramified in $L(b)$. 
Moreover, by Theorem \ref{char3localstandardform}, there exists $w_\mathfrak{p} \in K$ such that $v_\mathfrak{p} (\alpha_\mathfrak{p}) <0$ and $(v_\mathfrak{p} (\alpha_\mathfrak{p}),3)=1$, 
where 
$$\alpha_\mathfrak{p} =  \frac{(ja^2 + (w_\mathfrak{p}^3 + a w_\mathfrak{p}) )^2}{a^3},$$
and furthermore, there exists a generator $z_\mathfrak{p}$ of $L$ such that $z_\mathfrak{p}^3 + \alpha_\mathfrak{p} z_\mathfrak{p} + \alpha_\mathfrak{p}^2=0$. Again by [Theorem 5.6.3, Ibid.], the differential exponent $d(\mathfrak{p}_b | \mathfrak{p})=d(\mathfrak{P}_b | \mathfrak{P})$ of $\mathfrak{p}$ over $K(b)$ (resp. $\mathfrak{P}$ over $L(b)$) is equal to
\begin{enumerate} 
\item $1$ if $\mathfrak{p}$ is ramified in $K(b)$, whence $e(\mathfrak{p}_b | \mathfrak{p})=e(\mathfrak{P}_b | \mathfrak{P})=2$, and 
\item $0$ if $\mathfrak{p}$ is unramified in $K(b)$, whence $e(\mathfrak{p}_b | \mathfrak{p})=e(\mathfrak{P}_b | \mathfrak{P})=1$.
\end{enumerate}
 By \cite[Theorem 2.3]{Con}, $L(b)/K(b)$ is Galois and $-\alpha_\mathfrak{p}$ is a square in $K(b)$. We write $-\alpha_\mathfrak{p} = \beta_\mathfrak{p}^2$. Moreover, $w_\mathfrak{p}= \frac{z_\mathfrak{p}}{\beta_\mathfrak{p}}$ and $w_\mathfrak{p}^3 - w_\mathfrak{p}- \beta_\mathfrak{p}=0$. Moreover, 
 $$v_{\mathfrak{p}_b}(\beta_\mathfrak{p})= \frac{ v_{\mathfrak{p}_b}(\alpha_\mathfrak{p})}{2}=\frac{ e(\mathfrak{p}_b | \mathfrak{p})v_{\mathfrak{p}}(\alpha_\mathfrak{p})}{2} $$
 with $e(\mathfrak{p}_b | \mathfrak{p})=2$ or $1$, depending on whether $\mathfrak{p}$ is ramified or not in $K(b)$. Also, $v_{\mathfrak{p}_b}(\beta_\mathfrak{p})=v_{\mathfrak{p}}(\alpha_\mathfrak{p})$ when $\mathfrak{p}$ is ramified in $K(b)$, whereas $v_{\mathfrak{p}_b}(\beta_\mathfrak{p})= \frac{ v_{\mathfrak{p}}(\alpha_\mathfrak{p})}{2} $ when $\mathfrak{p}$ is unramified in $K(b)$ (note that in this case $2|v_{\mathfrak{p}}(\alpha_\mathfrak{p})$).  Thus $v_{\mathfrak{p}_b}(\beta_\mathfrak{p})<0$ and $(v_{\mathfrak{p}_b}(\beta_\mathfrak{p}),3)=1$ and by \cite[Theorem 3.7.8]{Sti}, we also have that the differential exponent $d(\mathfrak{P}_b | \mathfrak{p}_b)$ of $\mathfrak{p}_b$ in $L(b)$ satisfies 
 $$d(\mathfrak{P}_b | \mathfrak{p}_b)=2 (-v_{\mathfrak{p}}(\beta_\mathfrak{p})+1).$$ 
By \cite[Theorem 5.7.15]{Vil}, the differential exponent of $\mathfrak{p}$ in $L(b)$ satisfies
 $$d(\mathfrak{P}_b | \mathfrak{p})= d(\mathfrak{P}_b | \mathfrak{p}_b) + e(\mathfrak{P}_b | \mathfrak{p}_b) d(\mathfrak{p}_b | \mathfrak{p})= d(\mathfrak{P}_b | \mathfrak{P}) + e(\mathfrak{P}_b | \mathfrak{P}) d(\mathfrak{P} | \mathfrak{p}).$$
 Thus,
 \begin{enumerate} 
 \item if $\mathfrak{p}$ is ramified in $K(b)=K(\beta_\mathfrak{p})$, that is, $(v_\mathfrak{p}(\alpha_\mathfrak{p}),2)=1$ by \cite[Proposition 3.7.3]{Sti}, then $ 2 (-v_{\mathfrak{p}}(\alpha_\mathfrak{p})+1) + 3= 1 +  2d(\mathfrak{P} | \mathfrak{p})$ and 
 $$d(\mathfrak{P} | \mathfrak{p})= -v_{\mathfrak{p}}(\alpha_\mathfrak{p})+2,$$ 
 whereas
  \item if $\mathfrak{p}$ is unramified in $K(b)$, that is, $2|v_\mathfrak{p}(\alpha_\mathfrak{p})$ again by \cite[Proposition 3.7.3]{Sti}, then also
  $$   d(\mathfrak{P} | \mathfrak{p})= 2 \left( -\frac{v_{\mathfrak{p}}(\alpha_\mathfrak{p})}{2}+1\right)=  -v_{\mathfrak{p}}(\alpha_\mathfrak{p})+2.$$
 \end{enumerate}
 \item This is immediate from Theorem  \ref{char3localstandardform} and \cite[Theorem 5.6.3]{Vil}, via application of the same method as in  Lemma \ref{diffexpqneq1mod3} $(3)$.
\end{enumerate} 
\end{proof} 
Finally, we use this to conclude the Riemann-Hurwitz formula for cubic extensions in characteristic 3.
\begin{theorem}[Riemann-Hurwitz III] \label{char3RH}
Suppose that $p = 3$. Let $L/K$ be a separable cubic extension and $y$ a primitive element with minimal polynomial $X^3 + aX + a^2$. Then the genus $g_L$ of $L$ is given according to the formula $$g_L =  3g_K - 2 + \frac{1}{2} \sum_{\mathfrak{p} \in S}\left(-v_{\mathfrak{p}}(\alpha_\mathfrak{p} )+2 \right) d_{K}(\mathfrak{p}) + \frac{1}{2} \sum_{\mathfrak{p} \in T} d_{K}(\mathfrak{p}),$$

where \begin{enumerate} \item $S$ is the set of places of $K$ for which there exists $w_\mathfrak{p}  \in K$ such that $v_\mathfrak{p}(\alpha_\mathfrak{p}  ) < 0$, $(v_\mathfrak{p}(\alpha_\mathfrak{p}  ),3)=1$ with $$\alpha_\mathfrak{p}  = \frac{(ja^2 + (w_\mathfrak{p} ^3 + a w_\mathfrak{p} ) )^2}{a^3},$$ 
and 
\item $T$ is the set of places of $K$ for which there is generator $z_\mathfrak{p} $ such that $z_\mathfrak{p} ^3 +c_\mathfrak{p}  z_\mathfrak{p}  +c_\mathfrak{p}  ^2 =0$,  $v_\mathfrak{p}(c_\mathfrak{p}  ) \geq 0$ and $(v_\mathfrak{p} (c_\mathfrak{p}), 2)=1$. \end{enumerate}
\end{theorem}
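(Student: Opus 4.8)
The plan is to obtain the genus from the degree of the different, exactly as in the proofs of Theorems \ref{RHPC} and \ref{RH}. Since $[L:K]=3$, the Riemann--Hurwitz theorem (\cite[Theorem 9.4.2]{Vil}) gives
$$g_L = 3g_K - 2 + \frac{1}{2}\deg\mathfrak{D}_{L/K} = 3g_K - 2 + \frac{1}{2}\sum_{\mathfrak{P}} d(\mathfrak{P}\,|\,\mathfrak{p})\, d_L(\mathfrak{P}),$$
where $\mathfrak{p}$ denotes the place of $K$ below $\mathfrak{P}$ and only the finitely many places $\mathfrak{P}$ ramified over $K$ contribute. The first observation I would record is that whenever a place $\mathfrak{p}$ of $K$ ramifies (fully or partially) in the cubic extension $L/K$, the fundamental identity $\sum_i e_i f_i = 3$ forces $f(\mathfrak{P}|\mathfrak{p})=1$ for every place $\mathfrak{P}$ of $L$ over $\mathfrak{p}$, and hence $d_L(\mathfrak{P}) = f(\mathfrak{P}|\mathfrak{p})\,d_K(\mathfrak{p}) = d_K(\mathfrak{p})$.

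Next I would match the two index sets with the ramified places. By Theorem \ref{char3localstandardform}(1) the fully ramified places of $K$ in $L$ are precisely those in $S$. For $T$, Theorem \ref{char3localstandardform}(2) characterises the partially ramified places as those $\mathfrak{p}$ with $(v_\mathfrak{p}(a),2)=1$ admitting an admissible $\alpha = (ja^2 + (w^3+aw))^2/a^3$ with $v_\mathfrak{p}(\alpha)\geq 0$; since $v_\mathfrak{p}(\alpha) \equiv -3\,v_\mathfrak{p}(a) \equiv v_\mathfrak{p}(a) \pmod 2$, the condition $(v_\mathfrak{p}(a),2)=1$ coincides with $(v_\mathfrak{p}(\alpha),2)=1$, and invoking Lemma \ref{char3diffexp}(3) to rephrase ``admissible $\alpha$'' in terms of a generator $z_\mathfrak{p}$ with $z_\mathfrak{p}^3 + c_\mathfrak{p} z_\mathfrak{p} + c_\mathfrak{p}^2 = 0$, $v_\mathfrak{p}(c_\mathfrak{p})\geq 0$, $(v_\mathfrak{p}(c_\mathfrak{p}),2)=1$, shows that the partially ramified places are exactly those in $T$. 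Thus $S$ and $T$ are disjoint and finite, and $S\cup T$ is the set of ramified places.

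Finally I would substitute the differential exponents from Lemma \ref{char3diffexp}. If $\mathfrak{p}\in S$, there is a unique $\mathfrak{P}$ above it with $e(\mathfrak{P}|\mathfrak{p})=3$, and Lemma \ref{char3diffexp}(2) gives $d(\mathfrak{P}|\mathfrak{p}) = -v_\mathfrak{p}(\alpha_\mathfrak{p})+2$, so the contribution to $\deg\mathfrak{D}_{L/K}$ is $(-v_\mathfrak{p}(\alpha_\mathfrak{p})+2)\,d_K(\mathfrak{p})$. If $\mathfrak{p}\in T$, then $\mathfrak{p}\,\mathcal{O}_{L,x} = \mathfrak{P}_1^2\mathfrak{P}_2$ with $d(\mathfrak{P}_1|\mathfrak{p}) = 1$ (the ramification being tame, as $2$ is prime to $p=3$; Lemma \ref{char3diffexp}(3)) and $d(\mathfrak{P}_2|\mathfrak{p}) = 0$ (Lemma \ref{char3diffexp}(1)), so the contribution is $d_K(\mathfrak{p})$. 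Summing over $S \cup T$ and dividing by $2$ then yields the stated formula.

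I do not expect a deep obstacle, since the real content is carried by Theorem \ref{char3localstandardform} and Lemma \ref{char3diffexp}; the remaining work is organisational bookkeeping with the fundamental identity and \cite[Theorem 5.6.3]{Vil}. The one point that needs genuine care is well-definedness: the number $-v_\mathfrak{p}(\alpha_\mathfrak{p})+2$ appearing in the sum over $S$ must not depend on the auxiliary choice of $w_\mathfrak{p}$, and membership in $T$ must not depend on the choice of $z_\mathfrak{p}$. This is automatic because $d(\mathfrak{P}|\mathfrak{p})$ is an intrinsic invariant of $\mathfrak{P}|\mathfrak{p}$, so every admissible choice of parameter reproduces the same value; I would make this remark explicit when writing the argument in full.
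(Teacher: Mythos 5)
Your proposal is correct and follows essentially the same route as the paper, whose proof of Theorem \ref{char3RH} is precisely the one-line combination of Lemma \ref{char3diffexp}, the Riemann--Hurwitz formula \cite[Theorem 9.4.2]{Vil}, and the fundamental identity $\sum e_i f_i = 3$; you have simply written out the bookkeeping (residue degree $1$ at ramified places, differential exponents $-v_\mathfrak{p}(\alpha_\mathfrak{p})+2$ and $1$, the parity check identifying $T$ with the partially ramified places) that the paper leaves implicit. Your closing remark on the well-definedness of $-v_\mathfrak{p}(\alpha_\mathfrak{p})+2$ independently of the choice of $w_\mathfrak{p}$ is a worthwhile addition not made explicit in the paper.
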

\begin{proof} This follows from Lemma \ref{char3diffexp}, \cite[Theorem 9.4.2]{Vil}, and the fundamental identity $\sum e_i f_i = [L:K] = 3$. 
\end{proof}

\section*{Appendix: Algorithm for computing the genus of a cubic equation over $\mathbb{F}_q(x)$} 
In this all section $K = \mathbb{F}_q(x)$ and $L/K$ denotes a cubic extension. 
\subsubsection{Transforming any general cubic polynomial into our forms} 
In this section, we keep all the previous notations. We are given a cubic extension with a generator $y$, whose minimal polynomial is of the form $X^3 + e X^2 + f X + g$. We will first transform this generator into one of our forms. \\ \\ 

\begin{center}
{\bf Algorithm 1: Takes  $x^3 + e x^2 + f x + g$ and $p$} 
\end{center}
\begin{enumerate}
\item {\bf Case $p\neq 3$, }
\begin{enumerate} 
\item  if $3eg= f^2$,\\ {\bf RETURN} $x^3 -a$ with $a =  \frac{27g^3}{-27g^2+f^3}$ and $z=  \frac{3g y}{fy + 3g}$.
\item otherwise, \\ 
{\bf RETURN} $x^3 -3x -a$ with $a=-2-\frac{ ( 27 g^2 -9efg +2f^3)^2}{(3ge-f^2)^3}$ and $z= \frac{-(6efg-f^3-27g^2)y+3g(3eg-f^2)}{(3eg-f^2)(fy+3g)}$.

{\small {\it Note} that if $27 g^2 -9efg +2f^3=0$, the cubic polynomial is reducible, which violates the assumption that it must be irreducible. For the same reason, for instance, $g$ cannot be $0$. Note that this case is also not necessarily disjoint from (a); see also \cite[Theorem 2.1]{MWcubic3}.}  
\end{enumerate} 
\item {\bf Case $p=3$}
\begin{enumerate} 
\item if $e=f=0$, \\
{\bf RETURN} $x^3 -a$, with $a =g$ ; 
\item otherwise, \\
{\bf RETURN} $x^3 + a x + a^2$ with 
\begin{enumerate} 
\item if $e=0$ and $f \neq 0$,\\
 $a =  \frac{g^2}{f^3}$ and $z= \frac{g}{f^2} y$;
\item if $e\neq 0$ and $f=0$,\\
 $a =  \frac{g}{e^3}$ and $z= \frac{g}{e^2x}$;
\item otherwise,\\ 
 $a =  \frac{-f^2e^2+ge^3+f^3}{e^6}$ and $z= \frac{-f^2e^2+ge^3+f^3}{e^4(ey -f)}$. \\
 {\small  {\it Note} that if $-f^2e^2+ge^3+f^3=0$, the cubic polynomial is again reducible, contrary to the assumption that it must be an irreducible polynomial.} 
\end{enumerate}
\end{enumerate} 
\end{enumerate}

\subsubsection{$p\neq 3$} 

\begin{center} 
{\bf {\sc Case 1: The Algorithm 1 returned a form $X^3 -a$. } }
\end{center} 
The next algorithm returns the list of ramified places, indices of ramification, differential exponents, and the value of the genus for purely cubic extensions. \\ \bigskip

\begin{center} 
{\bf Algorithm 2: takes  $x^3-a $ and $p\neq 3$} 
\end{center} 
Use the {\sc factorization Algorithm} to factor $a$ into 
$$\frac{  \prod_{i=1}^s p_i(x)^{e_i} }{ \prod_{i=1}^t q_i(x)^{f_i} },$$ 
with $p_i(x), q_j(x)$ distinct irreducible polynomials and $e_i$ and $f_j$ natural numbers, $i \in \{1, \cdots, s\}$ and $j \in \{1, \cdots , t\}$. 

{\bf RETURN} 
\begin{itemize}
\item[$\cdot$] {\bf List of triples (ramified place, index of ramification, different exponent):}
\begin{itemize} 
\item[$\cdot$]  if $ 3\mid \left( \sum_{i=1}^s e_i \deg (p_i (x))  - \sum_{i=1}^i f_i \deg (q_i (x)) \right)$
$$\{  (p_i(x), 3, 2),  (q_j(x), 3, 2), i \in \{ 1, \cdots , s\} \ with \ 3 \nmid e_i , j \in \{ 1, \cdots , t\} \ with \ 3 \nmid f_i  \} $$ 
\item[$\cdot$] otherwise, 
$$\{  (p_i(x), 3, 2),  (q_j(x), 3, 2), (\infty, 3, 2), i \in \{ 1, \cdots , s\} \ with \ 3 \nmid e_i , j \in \{ 1, \cdots , t\} \ with \ 3 \nmid f_i  \} $$ 
\end{itemize} 
\item[$\cdot$] {\bf Genus of the extension}
\begin{itemize} 
\item[$\cdot$]  if $ 3\mid \left( \sum_{i=1}^s e_i \deg (p_i (x))  - \sum_{i=1}^i f_i \deg (q_i (x)) \right)$, {\small (i.e., the place at infinity is unramified,)} then 
$$g = -2 + \sum_{ \text{ for \  $i$ \ such \ that \ $3 \nmid e_i$}}   \deg( p_i(x) ) + \sum_{ \text{ for \  $i$ \ such \ that \ $3 \nmid f_i$}}   \deg( q_i(x)) $$ 
\item[$\cdot$] otherwise, $$g = -1 + \sum_{ \text{ for \  $i$ \ such \ that \ $3 \nmid e_i$}}   \deg( p_i(x) ) + \sum_{ \text{ for \  $i$ \ such \ that \ $3 \nmid f_i$}}   \deg( q_i(x))   .$$ \\ \bigskip
\end{itemize} 
\end{itemize}

{\small {\it Note} one could  also easily check such an extension is Galois indeed it suffices to check if $q \equiv \ \pm 1 \ mod \ 3$ as it is Galois if and only if $\mathbb{F}_q (x)$ contains a third root of unity. The later being equivalent to $q \equiv \ 1 \  mod \ 3$ (see \cite[Theorem 4.2]{MWcubic3}).  \\ \bigskip}

\begin{center} 
{\sc Application: finding integral basis for a purely cubic extension} 
\end{center}
The statement used for this Algorithm is done in \cite[Theorem 3]{MadMad}, and finds explicitly an integral basis for any purely cubic extension.\\ \bigskip

\begin{center}
{\bf Algorithm finding integral basis takes $y$ (generator of $L/K$ with minimal polynomial), $X^3-a $ and $p\neq 3$}
\end{center}

Use the {\sc factorization Algorithm} to factor $a$ into 
$$\frac{  \prod_{i=1}^s p_i(x)^{e_i} }{ \prod_{i=1}^t q_i(x)^{f_i} },$$ 
with $p_i(x), q_j(x)$ distinct irreducible polynomials and $e_i$ and $f_j$ natural numbers, $i \in \{1, \cdots, s\}$ and $j \in \{1, \cdots , t\}$. 

Use {\sc Euclidean Algorithm} to find $\lambda_i$, $\lambda_i'$ integers and $r_i, \ r_i' \in \{ 0 , 1, 2\}$ such that 
$$e_i = 3 \lambda_i + r_i$$ 
and 
$$f_i = -3 \lambda_i' - r_i'$$

{\bf RETURN} {\bf Integral basis for $L/K$:}
$$\{ \theta_0, \theta_1 , \theta_2\}$$
where
$$\theta_j = \frac{y^j}{\prod_{i=1}^s p_i(x)^{s_{ij}}\prod_{i=1}^t q_i(x)^{s_{ij}'}}$$ 
with $j = 0, 1, 2$, $s_{ij}= \big[ \frac{j r_i}{3}\big] + j \lambda_i$ is the greatest integer not exceeding $ \frac{-j r_i}{3}$ and $s_{ij}'= \big[ \frac{j r_i'}{3}\big] + j \lambda_i'$ is the greatest integer not exceeding $ \frac{-j r_i'}{3}$ .  \\ \bigskip

\begin{center} 
{\bf {\sc Case 2: The Algorithm 1 returned a form $x^3 -3x-a$. }} 
\end{center} 
Note that a cubic extension defined by an irreducible polynomial of the form $x^3-3x-a$ is not necessarily impurely cubic, see \cite[Theorem 2.1]{MWcubic3}. If the reader wishes to first decide if the extension is impurely cubic or not, and if it is not, transform it to a purely cubic extension and go back to Case 1, then s/he could use the following algorithm. Otherwise, the reader can also go directly to Algorithm 3. \\ \bigskip

{\bf Optional algorithm: takes $x^3 -3x-a$ and $p\neq 3$} 

\begin{enumerate} 
\item if $p \neq 2$ and 
\begin{enumerate} 
\item if $\Delta =a^2-4$ is not a square (one can use for instance the {\sc factorization algorithm} to test this),\\  
{\bf RETURN} $L/K$ is impurely cubic.
\item $\Delta= a^2-4$ is a square (one can use for instance the {\sc factorization algorithm} to test this) and determine $\sqrt{\Delta}$ such that $\sqrt{\Delta}^2 = \Delta$,\\  
{\bf RETURN} $L/K$ is purely cubic, taking $c= \frac{ -b + \sqrt{\Delta}}{2a}$ or $c=\frac{ -b - \sqrt{\Delta}}{2a}$, $U= \frac{cY-1}{Y-c}$, $U^3 - c$ irreducible polynomial for $L/K$. 
\end{enumerate} 
\item if $p=2$ and 
\begin{enumerate} 
\item  if $X^2 +aX +1$ has no root in $\mathbb{F}_q(x)$ (testing this requires an algorithm permitting one to check for roots for quadratic polynomials over $\mathbb{F}_q(x)$ in characteristic $2$.) \\ 
{\bf RETURN} $L/K$ is not purely cubic. 
\item if $X^2 + aX+1$ has a root in $\mathbb{F}_q(x)$, compute a root $c$ for this polynomial (this requires an algorithm finding root for quadratic polynomials over $\mathbb{F}_q(x)$ in characteristic $2$.) \\
{\bf RETURN} $L/K$ is purely cubic, $c$ a root of $X^2 + X+1/a$,  $U= \frac{cY-1}{Y-c}$, and $U^3 =c$ irreducible polynomial for $L/K$.  \\ \bigskip
\end{enumerate} 
\end{enumerate}

{\small {\it Note} that one can also to check if an extension with minimal polynomial $x^3 -3x -a$ is a Galois extension, 
\begin{enumerate} 
\item {\bf when $p \neq 2$,} one will only need to check if $-27(a^2-4)$ is a square or not in $\mathbb{F}_q(x)$, which is achievable with the {\sc factorization algorithm}, for instance. Once one knows it is Galois, that is, when $-27(a^2-4)$ is a square, one writes $\delta = \sqrt{-27 (a^2 -4)}$, taking $b = -\frac{1}{2} + \frac{9}{\delta} + \frac{9a}{2\delta}$ or $b=-\frac{1}{2} - \frac{9}{\delta} + \frac{9a}{2\delta}$  and
$$a = \frac{2 b^2 + 2 b -1}{ b^2 +b +1}$$ 
\item {\bf when $p=2$,} one will need to check if $R(x)= x^2 +a x + (1+a^2)$ (quadratic resolvent) has a root or not in $\mathbb{F}_q(x)$ (this would require an algorithm finding roots for quadratic polynomial in $\mathbb{F}_q(x)$ in characteristic $2$). Once one knows it is Galois, that is when this polynomial has a root in $\mathbb{F}_q(x)$, one computes a root for the quadratic resolvent. Call such a root $r$; taking $b= \frac{r}{a}$, one writes 
$$a = \frac{1}{ b^2 +b +1}$$ \\ \bigskip
\end{enumerate}} 

The next algorithm returns the list of ramified places, indices of ramification, differential exponents, and the value of the genus for cubic extensions with minimal polynomial of the form $x^3 -3x -a$. \\ \bigskip

\begin{center} 
{\bf Algorithm 3: takes  $x^3-3x-a $ and $p\neq 3$} 
\end{center} 
Use the {\sc factorization Algorithm} to factor $a$ into  $$\frac{f(x) }{ \prod_{i=1}^t q_i(x)^{f_i} },$$ with $q_i(x)$ distinct irreducible polynomials, $f(x)$ polynomial, $( f(x), \prod_{i=1}^t q_i(x)^{f_i} ) =1$ and $f_i$ natural number, for $i \in \{ 1, \cdots , t\}$. 

\begin{enumerate} 
\item {\bf Case $p\neq 2$}, Use the {\sc factorization Algorithm} to factor $a^2 -4$ into  
$$a^2 -4= \frac{\prod_{i=1}^r r_i(x)^{g_i}}{ \prod_{i=1}^k s_i(x)^{h_i} },$$ 
with $r_i(x), s_j (x)$ distinct irreducible polynomials, $g_i$, $h_i$ natural numbers for $i \in \{ 1, \cdots, r\}$, $j \in \{ 1, \cdots , k\}$. 

{\bf RETURN} 
\begin{itemize}
\item[{\bf Case $\infty$ unramified:}] When $3 | deg (a^2 -4)$,
\begin{enumerate} 
\item[$\cdot$]   {\bf List of triples (ramified places, indices of ramification, differential exponents): }
$$\begin{array}{lll} \{  (q_i(x), 3, 2),  (r_j(x), 2, 1), (s_u(x) , 2, 1), i \in \{ 1, \cdots , t \} \ with \ 3 \nmid e_i  , \\ 
   j \in \{ 1, \cdots , r\} \ with \ 2 \nmid g_j, \ u \in \{ 1, \cdots , k\}  \ with \ 2 \nmid h_u  \}   \end{array} $$ 
  \item[$\cdot$] {\bf Genus of the extension:} 

\begin{align*} g &=  -2 + \frac{1}{2}  \sum_{ \text{ for \  $i$ \ such \ that \ $2 \nmid g_i$}}   \deg( r_i(x)) + \frac{1}{2}  \sum_{ \text{ for \  $i$ \ such \ that \ $2 \nmid h_i$}}   \deg( s_i(x)) \\ 
&\qquad + \sum_{ \text{ for \  $i$ \ such \ that \ $3 \nmid f_i$}}   \deg( q_i(x)) \end{align*}
\end{enumerate} 
\item[{\bf Case $\infty$ ramified:}]Otherwise,
\begin{enumerate}
\item[$\cdot$]  {\bf List of triples (ramified places, indices of ramification, differential exponents): }
  $$\begin{array}{lll} \{  (q_i(x), 3, 2),  (r_j(x), 2, 1), (s_u(x) , 2, 1), (\infty, 2, 1),  i \in \{ 1, \cdots , t \} \ with \ 3 \nmid e_i  , \\ 
   j \in \{ 1, \cdots , r\} \ with \ 2 \nmid g_j, \ u \in \{ 1, \cdots , k\}  \ with \ 2 \nmid h_u  \}\end{array} $$ 
\item[$\cdot$]  {\bf Genus of the extension:} 
\begin{align*} g &=  -3/2 + \frac{1}{2}  \sum_{ \text{ for \  $i$ \ such \ that \ $2 \nmid g_i$}}   \deg( r_i(x)) + \frac{1}{2}  \sum_{ \text{ for \  $i$ \ such \ that \ $2 \nmid h_i$}}   \deg( s_i(x)) \\ 
&\qquad + \sum_{ \text{ for \  $i$ \ such \ that \ $3 \nmid f_i$}}   \deg( q_i(x)) \end{align*}
\end{enumerate} 
\end{itemize} 
\item {\bf Case $p=2$}, Do {\bf Algorithm Artin-Schreier} below giving it the polynomial $x^2 - x -\frac{1}{a}-1$, it will return, in particular,
$$b = \frac{ g(x)}{ \prod_{j=1}^s p_{i_j} (x)^{\alpha_{i_j}^t}} $$ 
with $g(x)$ polynomial, $(g(x) ,  \prod_{j=1}^s p_{i_j} (x)^{\alpha_{i_j}^t})=1$ and $2 \nmid \alpha_{i_j}^t$. 

{\bf RETURN} 
\begin{itemize}
\item[{\bf Case $\infty$ unramified:}] If $3 | \deg (b)$, 
\begin{itemize} 
\item[$\cdot$]  {\bf List of triples (ramified places, indices of ramification, differential exponents): } 
$$\begin{array}{lll} \{  (q_i(x), 3, 2),  (p_{i_j}(x), 2, \alpha^t_{i_j}+1),  i \in \{ 1, \cdots , t \} \ \text{with}  \ 3 \nmid f_i  ,  j \in \{ 1, \cdots , s \} \}
   \end{array} $$ 
\item[$\cdot$] {\bf Genus of the extension:}
\begin{align*} g =  -2 + \frac{1}{2}  \sum_{j=1}^s ( \alpha^t_{i_j}+1) \deg(p_{i_j}(x))    + \sum_{ \text{ for \  $i$ \ such \ that \ $3 \nmid e_i$}}   \deg( q_i(x)) \end{align*}  \\ \bigskip
\end{itemize} 
\item[{\bf Case $\infty$ ramified:}]  if $3 \nmid \deg(b)$,
\begin{itemize} 
\item[$\cdot$]  {\bf List of triples (ramified places, indices of ramification, differential exponents): } 
\begin{align*} \{  &(q_i(x), 3, 2),  (p_{i_j}(x), 2, \alpha^t_{i_j}+1),  (\infty, 2, deg(b)+1),  \\& i \in \{ 1, \cdots , t \} \ \text{with}  \ 3 \nmid e_i  ,  j \in \{ 1, \cdots , s \} \}
   \end{align*} 
\item[$\cdot$] {\bf Genus of the extension;}
\begin{align*} g =  -\frac{3}{2} - \frac{1}{2} \deg(b) + \frac{1}{2}  \sum_{i=1}^s ( \alpha^t_{i_j}+1) \deg(p_{i_j}(x))    + \sum_{ \text{ for \  $i$ \ such \ that \ $3 \nmid e_i$}}   \deg( q_i(x))   \end{align*}  \\ 
\end{itemize}
\end{itemize} 
\end{enumerate} 

The following algorithm is an intermediate algorithm used in the previous algorithm that computes ramified places for a Artin-Schreier extension.  More precisely, given an Artin-Schreier extension of prime degree $r$, that is, a extension with a generator $y$ such that its minimal polynomial is of the form $x^r -x -a$ called Artin-Schreier, this algorithm will find an Artin-Schreier generator $z$ such that its minimal polynomial if of the form $x^r -x -b$ where $b=a + \eta^r - \eta$ for some $\eta \in \mathbb{F}_q(x)$  and 
$$b = \frac{g(x) }{ \prod_{i=1}^k s_i(x)^{h_i} }$$ 
with $g(x)$ polynomial, $s_i(x)$ distinct irreducible polynomials, $h_i$ natural numbers with $r \nmid h_i$, $i \in \{1, \cdots, k\}$. This algorithm is a consequence of \cite[Example 5.8.8]{Vil}; we add it for completeness.
\\ \bigskip

\begin{center} 
{\bf Algorithm Artin-Schreier: takes $y$ (a generator for the Artin-Schreier extension such that its minimal polynomial is), $x^r-x-a $ and $r$ prime number } 
\end{center}

\begin{enumerate} 
\item[{\bf Step 1}]

Use the {\sc factorization Algorithm} to factor $a$ into 
$$\frac{ f(x)}{ \prod_{i=1}^t p_i(x)^{\alpha_i} },$$ with $p_i(x)$ distinct irreducible polynomials, $f(x)$ polynomial and $( f(x),  \prod_{i=1}^t p_i(x)^{\alpha_i})=1$, $\alpha_i$ natural numbers, $i \in \{ 1, \cdots t\}$. \\


For each $i \in \{ 1, \cdots , t \}$, we denote $\mathfrak{p}_i$ to be the finite place associated to $p_i(x)$. \\ 

Do {\sc {\bf Step 2}}, for each $i$ such that $ r | \alpha_i$. 

\item[{\bf Step 2}] Given $i$ as above,
\item[{\bf Step 2a}] 

Write $\alpha_i = r \lambda_i$, for some $\lambda_i$ natural number.

Do the {\sc algorithm computing the partial fraction decomposition} of $a$ as 
$$ a = s(x)+ \sum_{i=1}^t \sum_{k=0}^{\alpha_i -1} \frac{ t_k^{(i)} (x) }{ p_i (x)^{\alpha_i -k}}$$ 
such that 
$$ \deg ( t_k^{(i)} (x)) < \deg ( p_i (x)), \ for \ k = 0 , 1, \cdots , \alpha_i -1$$ 
 
Write 

$$a = \frac{ t_0^{(i)} (x) }{ p_i (x)^{ 2 \lambda_i }} + t_1 (x)$$ 

{\small {\it Note $$v_{\mathfrak{p}_i} ( t_1 (x) ) > - 2\lambda_i $$ }}

\item[{\bf Step 2b}] Find $m(x) \in k[x]$ such that 
$$ m(x)^r \equiv t^{(i)}_0 (x)  \mod p_i(x)$$

{\small {\it Note} this is possible since $k$ is a perfect field and $[ k[x]/ (p_i (x)) : k ]< \infty$, so that $M= k[x]/ (p_i (x))  $ is perfect, that is, $M^2= M$. The previous step can be achieved by finding a root $\delta$ for $p_i(x)$ in $\mathbb{F}_{q^{\deg(p_i(x))}}$ then computing $ t^{(i)}_0 (\delta )$ in $\mathbb{F}_{q^{\deg(p_i(x))}}$ and finding $\beta \in \mathbb{F}_{q^{\deg(p_i(x))}}$ such that $\beta^r = t^{(i)}_0 (\delta)$ in $\mathbb{F}_{q^{\deg(p_i(x))}}$. Then $m(x) = \beta + p_i(x)$ will satisfy the congruence above. } 

\item[{\bf Step 2c}] We set 
\begin{itemize} 
\item[$\cdot$] $$z = y - \left(\frac{m(x)}{p_i(x)^{\lambda_i}} \right)$$
\item[$\cdot$] $$b =a-  \left(\frac{m(x)}{p_i(x)^{\lambda_i}} \right)^r -\left( \frac{m(x)}{p_i(x)^{\lambda_i}} \right).$$
\end{itemize}

We write $b = p_i(x)^{\alpha'_i}  \frac{ g(x)}{ q(x)}$ with $q(x)$ polynomial and $p_i(x)$ irreducible polynomial and $(p_i(x) ,  q(x))=1$, $(g(x),q(x))=1$, $(p_i(x), g(x))=1$ and $\alpha'_i $ integer. 
\begin{enumerate} 
\item If $\alpha'_i \geq 0$; Change $i$ in {\bf STEP 2} with $a=b$ and $y=z$, if there are no more $i$ to work with, taking $a=b$ and $y=z$ exit to {\bf STEP 3}. 
\item If $\alpha'_i < 0$ with $r \nmid \alpha'_i $; Change $i$ in {\bf STEP 2} with $a=b$ and $y=z$, if there are no more $i$ to work with, taking $a=b$ and $y=z$ exit to {\bf STEP 3}.
\item Otherwise, repeat {\bf Step 2} with taking $a=b$ and $y=z$.
\end{enumerate}


{\small {\it Note} that 
$$v_{\mathfrak{p}_i} ( a + w^r -w) \geq min \{  v_{\mathfrak{p}_i} ( a + w^r ), v_{\mathfrak{p}_i} ( w)\}$$ 

but $v_{\mathfrak{p}_i} ( a + w^r ) > - r\lambda_i$ 
and 
$ v_{\mathfrak{p}_i} (w) = -\lambda_i > -r \lambda_i$.

Therefore,
$$ v_{\mathfrak{p}_i} (a + w^r -w) > v_{\mathfrak{p}_i} ( a ).$$  
{\it Note} that the other prime valuations are affected in a way that does not cause any nonnegative valuations to become negative.
Hence, the process will end in finite time. }
\item[{\bf STEP 3}] We set
 $$s = \deg(a)$$ 
 \begin{enumerate}
 \item If $s \leq 0$, move to {\bf STEP 4}. 
 \item If $s>0$, $r \nmid s$, move to {\bf STEP 4}. 
 \item Otherwise 
 \begin{enumerate} 
\item Use the {\sc euclidean algorithm} to write $s = rd$ with $d$ a natural number,
\item Write 
$$ a = \frac{g(x)}{ h(x)}$$ 
where $g(x)$, $h(x)$ polynomials with $(f(x), g(x))=1$. 
 \item Apply the {\sc Euclidean algorithm} to find $q(x)$ and $r(x)$ polynomial with $ \deg( r(x)) < h(x)$ or $r(x) =0$, and 
 $$ g(x) = h(x) q(x) + r(x)$$ 
 \item Write $q(x) = \alpha x^{rt} + t(x)$ 
 
 {\it {\small Note $\deg(t(x))< rt$.}} 
 \item Find $\beta$ in $\mathbb{F}_q$ such that $\beta ^r= \alpha $.
 \item Take
 \begin{itemize} 
 \item[$\cdot$] $$z  = y-  \beta x^t$$
\item[$\cdot$] $$b =  a- (\beta x^t)^r + \beta x^t $$ 
 \end{itemize} 
 
\item  \begin{itemize} 
 \item[$\cdot$] If $r | \deg(a)$, then redo {\bf STEP 3} with $y=z$ and $a =b$. 
 \item[$\cdot$] Otherwise exit to {\bf STEP 4}. 
 \end{itemize} 
{\small {\it Note $\deg (b) \leq rt- 1<rt$ }.  Note that the other prime valuations are affected in a way that does not cause any nonnegative valuations to become negative. Hence, the entire process will end in finite time.}
 \end{enumerate} 
 \end{enumerate} 
\item[{\bf STEP 4}] {\bf RETURN} 
\begin{itemize} 
\item[$\cdot$] $z$ in terms of initial $y$ given into the algorithm. 
\item[$\cdot$]  $$b = \frac{ g(x)}{ \prod_{j=1}^s p_{i_j} (x)^{\alpha_{i_j}^t}} $$ 
with $g(x)$ polynomial, $\alpha_{i_j}^t$ natural number with $3 \nmid \alpha_{i_j}^t$, for $j \in \{ 1, \cdots , s\}$.\\ 
\end{itemize} 
\bigskip

\end{enumerate} 

\begin{center} 
{\sc Application: finding an integral basis for a cubic extension defined by an irreducible polynomial of the form $X^3 -3X -a$.} 
\end{center}
The statement used for this Algorithm is done in \cite[Theorem 2.1]{MWcubic4} and finds explicitly an integral basis for cubic extensions defined by a irreducible polynomial of the form $X^3 -3X -a$.\\ \bigskip

\begin{center}
{\bf Algorithm integral basis 2: takes $y$ (generator of $L/K$ with minimal polynomial), $X^3-3X-a $ and $p\neq 3$} 
\end{center}
Use the {\sc factorization algorithm} to factor $a$ into 
$$a=\frac{ \alpha}{\gamma^3 \beta},$$ 
where $(\alpha,\beta\gamma) = 1$, $\beta$ polynomial is cube-free, and $\beta = \beta_1 \beta_2^2$, where $\beta_1$ and $\beta_2$ are polynomials square-free. 

\begin{itemize} 
\item[{$\cdot$}] {\bf Case $p\neq 2$}, 
\begin{enumerate} 
\item Use the {\sc factorization algorithm} to factor $4\gamma^6 \beta^2 - \alpha^2$ into 
$$(4\gamma^6 \beta^2 - \alpha^2)=\eta_1 \eta_2^2,$$ where $\eta_1$ is polynomial square-free. 

\item Use {\sc Chinese remainder theorem} to find $T$ and $S$ polynomials such that 
$$
T \equiv -\frac{  \alpha  }{2 \gamma^2 \beta_2}  \mod \eta_2^2\quad\text{ and } \quad T\equiv 0 \mod \beta_1^2;
$$

\item Set $I= \beta_1^2 \eta_1^2$. 
\end{enumerate} 
\item[$\cdot$] {\bf Case $p= 2$}, \\
Use {\sc Factorization Algorithm} to factor $\alpha$ into 
 $$\alpha = \prod_{i=1}^s \alpha_i^{s_i },$$ 
 where $\alpha_i$ distinct irreducible polynomials. \\ \bigskip

 \begin{itemize}
 \item[{\bf Galois case}] If the polynomial $R(X)= X^2 +a X +(1+a^2)$ (quadratic resolvent) has a root in $\mathbb{F}_q(x)$:  \\ 
 Then, 
 \begin{enumerate}
\item Find a root $r$ of $R(X)$. 
 \item Set $I = \beta_1 \alpha$. 
 \item Use {\sc Chinese remainder theorem} to find a polynomial $T$ such that 
$$T \equiv 0 \mod \beta_1 \quad\text{ and }\quad  T \equiv \frac{r}{  \gamma^2 \beta_1^2 \beta_2^2} \mod \alpha;$$
 \end{enumerate} 
\item[{\bf Non Galois Case}] If the polynomial $R(X)= X^2 +a X +(1+a^2)$ (quadratic resolvent) has no root  in $\mathbb{F}_q(x)$.
\begin{enumerate} 
\item Use {\sc Artin-Schreier Algorithm} above with $x^2 -x- \frac{ \gamma^6 \beta_1^2 \beta_2^4 }{ \alpha ^2} $, it will return, in particular
$$b = \frac{ g(x)}{ \prod_{j=1}^s \alpha_{i_j}^{t_{i_j}^t}} $$ 
with $g(x)$ polynomial, $t_{i_j}^t$ natural number such that $3\nmid t_{i_j}^t$. 
\item Set $$I =\beta_1 \alpha \prod_{j=1}^s \alpha_{i_j}^{-\frac{1}{2} (t_{i_j}^t+1) } $$
 \item Use {\sc Chinese remainder theorem} to find a polynomial $T$ such that 
  $$T \equiv 0 \mod \beta_1 \quad \text{ and }\quad T \equiv \frac{\alpha b}{\gamma^2 \beta_2}\mod \alpha \prod_{j=1}^s \alpha_{i_j}^{-\frac{1}{2} (t_{i_j}^t+1) }$$ 
  \end{enumerate} 
\end{itemize} 
\end{itemize} 

{\bf RETURN}  Integral basis of $L/K$
$$\mathfrak{B} = \left\{ 1, \omega+ S, \frac{1}{I} (\omega^2+ T\omega +V  ) \right\}$$  
with $S, V \in \mathbb{F}_q[x]$ and $ V \equiv T^2 -3(\gamma \beta_1 \beta_2)^2 \mod I$.

\subsubsection{$p=3$} 
\begin{center} 
{\sc The Algorithm 1 returned a form $X^3 + a X + a^2$. } \\ \bigskip
\end{center}

The next algorithm returns the list of ramified places, indices of ramification, differential exponents, and the value of the genus for cubic extensions with minimal polynomial of the form $x^3 +a x +a^2$. \\ \bigskip
\begin{center}
{\bf Algorithm 4: takes  $X^3 + a X + a^2$ and $p=3$} \\ \bigskip
\end{center} 
\begin{enumerate} 
\item Use the {\sc factorization algorithm}, to factorize $a$ as 
$$ \frac{ f(x)}{\prod_{i=1}^m p_i(x)^{\alpha_i}}$$ 
with $p_i(x)$ distinct irreducible polynomials, $f(x)$  polynomial and $(f(x) ,   \prod_{i=1}^m p_i(x)^{\alpha_i}) =1$ and $$f(x) = \prod_{i=1}^u r_i(x)^{\beta_i}$$
where $r_i(x)$ distinct irreducible polynomials and $\beta_i$ natural integer, for $i \in \{ 1, \cdots , s\}$. 

\item Use the {\sc Generalized Artin-Schreier Algorithm} below for $X^3 + a X + a^2$, it will return in particular,
 $$b = \frac{ g(x)}{ \prod_{j=1}^s p_{i_j} (x)^{\alpha_{i_j}^t}} $$ 
with $g(x)$ polynomial, $\alpha_{i_j}^t$ natural number with $3 \nmid \alpha_{i_j}^t$, for $j \in \{ 1, \cdots , s\}$.
\end{enumerate} 

{\bf RETURN} 
\begin{itemize}
\item[{\bf Case $\infty$ ramified:}] if $2 \nmid \deg (a)$ (initial $a$ given into algorithm) and $\deg(b) \leq 0$, 
\begin{itemize} 
\item[$\cdot$]  {\bf List of triples (ramified places, indices of ramification, differential exponents): } 
$$\begin{array}{lll} \{   (p_{i_j}(x), 3, \alpha^t_{i_j}+1), (r_i(x), 2 , 1), (p_k(x), 2,1) , (\infty, 2, 1),  j \in \{ 1, \cdots , s \},\\
 i \in \{ 1, \cdots , u \}  \text{ such that } 2 \nmid \beta_i , k \in \{1, \cdots , m \}\backslash \{ i_1, \cdots , i_s\} \}
   \end{array} $$ 
\item[$\cdot$] {\bf Genus of the extension:}
\begin{align*} g =&  -\frac{3}{2} + \frac{1}{2}  \sum_{i=1}^s ( \alpha^t_{i_j}+1) \deg(p_{i_j}(x))    +\frac{1}{2} \sum_{ \text{ for \  $i$ \ such \ that \ $2 \nmid \beta_i$}}   \deg( r_i(x))\\
& +\frac{1}{2} \sum_{ k \in \{1, \cdot , m \}\backslash \{ i_1, \cdots , i_s\}}   \deg( p_k(x))
 \end{align*}  \\ \bigskip
\end{itemize} 
\item[{\bf Case $\infty$ ramified:}]  Otherwise,
\begin{itemize} 
\item[$\cdot$]  {\bf List of triples (ramified places, indices of ramification, differential exponents): } 
$$\begin{array}{lll} \{   (p_{i_j}(x), 3, \alpha^t_{i_j}+1), (r_i(x), 2 , 1), (p_k(x), 2,1) , (\infty, 2, 1),  j \in \{ 1, \cdots , s \},\\
 i \in \{ 1, \cdots , u \}  \text{ such that } 2 \nmid \beta_i , k \in \{1, \cdots , m \}\backslash \{ i_1, \cdots , i_s\} \}
   \end{array} $$ 
\item[$\cdot$] {\bf Genus of the extension:}
\begin{align*} g =&  -2 + \frac{1}{2}  \sum_{i=1}^s ( \alpha^t_{i_j}+1) \deg(p_{i_j}(x))    +\frac{1}{2} \sum_{ \text{ for \  $i$ \ such \ that \ $2 \nmid \beta_i$}}   \deg( r_i(x))\\
& +\frac{1}{2} \sum_{ k \in \{1, \cdots , m \}\backslash \{ i_1, \cdots , i_s\}}   \deg( p_k(x))
 \end{align*}  \\ \bigskip
\end{itemize} 
\end{itemize}

The following algorithm is an intermediate algorithm used in the previous algorithm that computes ramified places for a extension with minimal polynomial $X^3+aX+a^2$.  This algorithm will find a generator $z$ such that its minimal polynomial is of the form $x^3+ b x +b^2$ where $b=  \frac{( a^2 + \eta ^3 + a\eta )^2}{ a^3}$
 for some $\eta \in \mathbb{F}_q(x)$  and 
$$b = \frac{g(x) }{ \prod_{i=1}^k s_i(x)^{h_i} }$$ 
with $g(x)$ polynomial, $ s_i(x)$ distinct irreducible polynomials, $h_i$ natural numbers with $r \nmid h_i$, $i \in \{1, \cdots, k\}$. This algorithm uses the same arguments as the ones used in Theorem  \ref{char3localstandardform} and \cite[Lemma 1.2]{MWcubic3}, one difference is that we also address the place at infinity, which was not done in \cite[Lemma 1.2]{MWcubic3}.
\\ \bigskip

\begin{center}
{\bf Generalized Artin-Schreier Algorithm: takes $y$ (generator for the extension $L/K$ with minimal polynomial),  $X^3 + a X + a^2$ and $p=3$} \\ \bigskip
\end{center} 
\begin{enumerate} 
\item[{\bf Step 1}] Use the {\sc factorization algorithm}, to factorize $a$ as 
$$ \frac{ f(x)}{\prod_{i=1}^t p_i(x)^{\alpha_i}}$$ 
with $p_i(x)$ distinct irreducible polynomials, $f(x)$  polynomial and $(f(x) ,   \prod_{i=1}^t p_i(x)^{\alpha_i}) =1$, $\alpha_i$ natural numbers, $i \in \{ 1, \cdots, t\}$. 


For each $i \in \{ 1, \cdots , t\}$, we denote $\mathfrak{p}_i$ to be the finite place associated to $p_i(x)$.

For each $i$ such that $ 3 | \alpha_i$, do {\bf STEP 2}:

\item[{\bf Step 2}] For some $i$ as above

\item[{\bf Step 2a}]  

Write $\alpha_i = 3 \lambda_i$, for some $\lambda_i$ natural number.

Use the {\sc partial fraction decomposition algorithm} and write 
$$ a = s(x)+ \sum_{i=1}^t \sum_{k=0}^{\alpha_i -1} \frac{ t_k^{(i)} (x) }{ p_i (x)^{\alpha_i -k}}$$ 
such that 
$$ \deg ( t_k^{(i)} (x)) < \deg ( p_i (x)), \ for \ k = 0 , 1, \cdots , \alpha_i -1$$ 

Then, write 

$$a = \frac{ t_0^{(i)} (x) }{ p_i (x)^{ 3 \lambda_i }} + t_1 (x)$$ 
{\small {\it Note
$$v_{\mathfrak{p}_i} ( t_1 (x) ) > -3\lambda_i $$ }}

\item[{\bf Step 2b}]  Find $m(x) \in k[x]$ such that 
$$ m(x)^3 \equiv t^{(i)}_0 (x)^2  \mod p_i(x)$$ 

{\small {\it Note }This is possible since $k$ is a perfect field and $[ k[x]/ (p_i (x)) : k ]<\infty$, then $M= k[x]/ (p_i (x))  $ is perfect, that is $M^3= M$. The previous step can be achieved by finding a root $\delta$ for $p_i(x)$ in $\mathbb{F}_{q^{\deg(p_i(x))}}$, then computing $ t^{(i)}_0 (\delta )^2$ in $\mathbb{F}_{q^{\deg(p_i(x))}}$, and finding $\beta \in \mathbb{F}_{q^{\deg(p_i(x))}}$ such that $\beta^3 = t^{(i)}_0 (\delta)^2$ in $\mathbb{F}_{q^{\deg(p_i(x))}}$. Then $m(x) = \beta + p_i(x)$ will satisfy the congruence above. }

\item[{\bf Step 2a}]  
We set 
\begin{itemize} 
\item[$\cdot$] $$ z = \frac{1}{a} \left( y -  \big( \frac{ m(x)}{ p_i (x)^{2 \lambda_i}}\big) \right) $$
\item[$\cdot$] $$ b = \frac{( a^2 + \big( \frac{ m(x)}{ p_i (x)^{2 \lambda_i}}\big) ^3 + a\big( \frac{ m(x)}{ p_i (x)^{2 \lambda_i}}\big) \big)   ^2}{ a^3}$$ 
\end{itemize}

Using the {\sc Factorization algorithm}, we write 
$$b = \frac{ g(x)}{ p_i(x)^{\alpha'_i} q(x)}$$ 
with $q(x)$ polynomial and $g(x)$ polynomial, $(p_i(x) ,  q(x))=1$ and $(g(x), p_i(x))=1$, $(g(x), q(x))=1$ and $\alpha_i'$ an integer.
\begin{enumerate} 
\item If $\alpha'_i \geq 0$; Change $i$ in {\bf STEP 2} with $a=b$ and $y=z$, if there are no more $i$ to work with,  taking $a=b$ and $y=z$ exit to {\bf STEP 3}. 
\item If $\alpha'_i < 0$ with $r \nmid \alpha'_i $; Change $i$ in {\bf STEP 2} with $a=b$ and $y=z$, if there are no more $i$ to work with, taking $a=b$ and $y=z$ exit to {\bf STEP 3}.
\item Otherwise, repeat {\bf Step 2} with taking $a=b$ and $y=z$.
\end{enumerate}

{\small {\it Note} that one can write 
$$ a^2 =  \frac{ t_0^{(i)} (x)^2 }{ p_i (x)^{6 \lambda_i }} + t_2(x)$$ 
where 
$$v_{\mathfrak{p}_i} ( t_2 (x) ) > -6\lambda_i $$ 
We get 
$$v_{p_i} ( a^2 + w^3 + a w) \geq \min \{ v_{\mathfrak{p}_i} \left(  \frac{ t_0^{(i)} (x)^2 }{ p_i (x)^{6 \lambda_i }} + \frac{m^3 (x)}{ p_i(x)^{ 6 \lambda_i}}  \right) + v_{\mathfrak{p}_i }( t_2 (x)) + v_{\mathfrak{p}_i } (a w)\}$$ 
 
But, 
$$v_{\mathfrak{p}_i} \left(  \frac{ t_0^{(i)} (x)^2 }{ p_i (x)^{6 \lambda_i }} + \frac{m^3 (x)}{ p_i(x)^{ 6 \lambda_i}}  \right)\geq 1 - 6 \lambda_i > - 6 \lambda_i, $$ 

$$v_{\mathfrak{p}_i }( t_2 (x))  > - 6 \lambda_i, $$ 
and 
$$ v_{\mathfrak{p}_i } (a w)= v_{\mathfrak{p}_i } (a )+ v_{\mathfrak{p}_i } ( w) = -3\lambda_i - 2 \lambda_i > -6 \lambda_i $$ Therefore, 
$$ v_{p_i} ( a^2 + w^3 + a w)> 2 v_{\mathfrak{p}_i} (a) $$ 
and 

$$v_{p_i} (b)= v_{p_i} \left( \frac{( a^2 + w^3 + a w )^2}{ a^3} \right) >  v_{p_i} (  a)$$ 
{\it Note} that the other prime valuations are affected in a way that does not cause any nonnegative valuation to become negative (see \cite[Lemma 1.2]{MWcubic3} to find the argument one can use to prove this). So the process will finish in finite time. 
 }
 
 \item[{\bf STEP 3}] We set
 $$s = \deg(a)$$ 
 \begin{enumerate}
 \item If $s \leq 0$, move to {\bf STEP 4}. 
 \item If $s>0$, $3 \nmid s$, move to {\bf STEP 4}. 
 \item Otherwise 
 \begin{enumerate} 
\item Use the {\sc euclidean algorithm} to write $s = 3d$ with $d$ a natural number,
\item Write 
$$ a = \frac{g(x)}{ h(x)}$$ 
where $g(x)$, $h(x)$ polynomials with $(f(x), g(x))=1$. 
 \item Apply the {\sc Euclidean algorithm} to find $q(x)$ and $r(x)$ polynomial with $ \deg( r(x)) <h(x) $ or $r(x) =0$, and 
 $$ g(x) =h(x)  q(x) + r(x)$$ 
 
 \item Write $q(x) = \alpha x^{3t} + t(x)$ 
 
 {\it {\small Note $\deg(t(x))< 3t$.}} 
 \item Find $\beta$ in $\mathbb{F}_q$ such that $\beta ^3= \alpha^2 $.
 \item Take
 \begin{itemize} 
 \item[$\cdot$] $$z  = \frac{1}{a} \big( y+  \beta x^{2t} \big)$$
\item[$\cdot$] $$b =  \frac{( a^2 - (\beta x^{2t})^3 - a (\beta x^{2t})  )^2}{ a^3}$$ 
 \end{itemize} 
 \item  \begin{itemize} 
 \item[$\cdot$] If $r | \deg(b)$, then redo {\bf STEP 3} with $y=z$ and $a =b$. 
 \item[$\cdot$] otherwise exit to {\bf STEP 4}. 
 \end{itemize} 
{\small {\it Note $\deg (b) \leq 3t-1< 3t$ }. Note also that the other prime valuations are affected in a way that does not cause any non negative valuations to become negative. Hence, the entire process will end in finite time.}
 \end{enumerate} 
 \end{enumerate} 
\item[{\bf STEP 4}] {\bf RETURN} 
\begin{itemize} 
\item[$\cdot$] $z$ in terms of initial $y$ given into the algorithm, 
\item[$\cdot$]  $$b = \frac{ g(x)}{ \prod_{j=1}^s p_{i_j} (x)^{\alpha_{i_j}^t}} $$ 
with $g(x)$ polynomial, $\alpha_{i_j}^t$ natural number with $3 \nmid \alpha_{i_j}^t$, for $j \in \{ 1, \cdots , s\}$.
\end{itemize} 
 \end{enumerate} 

{\small {\it Note} that it is also easy to check if a cubic extension over $\mathbb{F}_q(x)$ with generator $y$ such that its minimal polynomial is $x^3 +ax^2 + a$ is Galois, as one only needs to check if $-a$ is a square in $\mathbb{F}_q(x)$. When it is a square, the extension is Galois, and one finds $b$ such that $-a = b^2$ using for instance the {\sc factorization algorithm}, and then $\frac{y}{a}$ is an Artin-Schreier generator with minimal polynomial $x^3 -x - \frac{1}{a}$.}  \\ \bigskip

\begin{center} 
{\sc Application: finding integral basis for an extension with minimal polynomial of the form $x^3 +ax^2 + a$.} 
\end{center}
The statement used for this Algorithm is done in \cite[Theorem 3]{MadMad}, and finds explicitly an integral basis for any purely cubic extension.\\ \bigskip

\begin{center}
{\bf Algorithm integral basis 3: takes $y$ (generator of $L/K$ with minimal polynomial), $X^3+aX+a^2 $ and $p= 3$}
\end{center}
\begin{enumerate} 
\item Use the {\sc factorization algorithm}, to factorize $a$ as 
$$ \frac{ f(x)}{\prod_{i=1}^m p_i(x)^{\alpha_i}}$$ 
with $p_i(x)$ distinct irreducible polynomials, $f(x)$  polynomial and $(f(x) ,   \prod_{i=1}^m p_i(x)^{\alpha_i}) =1$, $\alpha_i$ natural number and $$f(x) = \prod_{i=1}^u r_i(x)^{\beta_i}$$
where $r_i(x)$ distinct irreducible polynomials and $\beta_i$ natural integer, for $i \in \{ 1, \cdots , u\}$. 

\item Use the {\sc Generalized Artin-Schreier Algorithm} above, it will return 
\begin{itemize} 
\item[$\cdot$] $z$ in terms of $y$. 
\item[$\cdot$]  $$b = \frac{ g(x)}{ \prod_{j=1}^s p_{i_j} (x)^{\alpha_{i_j}^t}} $$ 
with $g(x)$ polynomial, $\alpha_{i_j}^t$ natural number with $3 \nmid \alpha_{i_j}^t$, for $j \in \{ 1, \cdots , s\}$.
\end{itemize} 

 \item Write $b$ into 
$$b= \frac{ \xi_1 \xi_2^2 }{ \prod_{j=1}^s p_{i_j} (x)^{\alpha_{i_j}^t}},$$ 
with $\xi_1, \xi_2  \in \mathbb{F}_q[x]$, $\xi_1$ is square-free, and $(\xi_1 \xi_2, \beta ) = 1$.

\end{enumerate}

{\bf RETURN:} {\bf Integral basis for $L/K$:}
$$\mathfrak{B}=\left\{ \frac{P_2}{\xi_1\xi_2^2} z^2 , \frac{P_1}{\xi_2} z, 1 \right\}$$
 where $$P_k = \prod_{j=1}^s p_{i_j}(x)^{1+ \left\lfloor \frac{k 2\alpha_{i_j}^t}{3}\right\rfloor},$$ 
 for $k=1,2$, where $\left\lfloor \frac{k 2\alpha_{i_j}^t}{3}\right\rfloor$ is the integral part of  $\frac{k 2 \alpha_{i_j}^t}{3}$.

\bibliographystyle{plain}
\raggedright
\bibliography{references}
\vspace{.5cm}

\end{document}